\documentclass[pdflatex,sn-mathphys-num]{sn-jnl}% Math and Physical Sciences Numbered Reference Style
\usepackage{graphicx}%
\usepackage{multirow}%
\usepackage{amsmath,amssymb,amsfonts}%
\usepackage{amsthm}%
\usepackage{mathrsfs}%
\usepackage[title]{appendix}%
\usepackage{xcolor}%
\usepackage{textcomp}%
\usepackage{manyfoot}%
\usepackage{booktabs}%
\usepackage{algorithm}%
\usepackage{algorithmicx}%
\usepackage{algpseudocode}%
\usepackage{listings}%

\usepackage{mathtools}
\usepackage{cleveref}

\usepackage{enumitem}

\theoremstyle{thmstyleone}%
\newtheorem{theorem}{Theorem}%  meant for continuous numbers
\newtheorem{proposition}[theorem]{Proposition}% 

\theoremstyle{thmstyletwo}%
\newtheorem{remark}{Remark}%
\newtheorem{exmp}{Example}[section]
\theoremstyle{thmstylethree}%
\newtheorem{definition}{Definition}%
\newtheorem{assumption}{Assumption}%

\begin{document}

\title[Article Title]{An Improved Boosted DC Algorithm for Nonsmooth Functions with Applications in Image Recovery}

%%=============================================================%%
%% GivenName	-> \fnm{Joergen W.}
%% Particle	-> \spfx{van der} -> surname prefix
%% FamilyName	-> \sur{Ploeg}
%% Suffix	-> \sfx{IV}
%% \author*[1,2]{\fnm{Joergen W.} \spfx{van der} \sur{Ploeg} 
%%  \sfx{IV}}\email{iauthor@gmail.com}
%%=============================================================%%

\author[1]{\fnm{Zeyu} \sur{Li}}\email{1155184076@link.cuhk.edu.hk}
\equalcont{These authors contributed equally to this work.}
\author[1]{\fnm{Te} \sur{Qi}}\email{tqi@math.cuhk.edu.hk}
\equalcont{These authors contributed equally to this work.}

\author*[2,3]{\fnm{Tieyong} \sur{Zeng}}\email{tieyongzeng@bnbu.edu.cn}
%\equalcont{These authors contributed equally to this work.}

\affil*[1]{\orgdiv{Department of Mathematics}, \orgname{Chinese University of Hong Kong}, \orgaddress{\street{Shatin}, \postcode{999077}, \city{Hong Kong}}}

\affil[2]{\orgdiv{Institute for Advanced Study}, \orgname{Beijing Normal Hong Kong Baptist University}, \orgaddress{ \city{Zhuhai},  \state{Guangdong},\postcode{519087}, \country{China}}}

\affil[3]{\orgdiv{School of Mathematics and Statistics}, \orgname{Guangzhou Nanfang College}, \orgaddress{\city{Guangzhou}, \postcode{510970}, \state{Guangdong}, \country{China}}}

%%==================================%%
%% Sample for unstructured abstract %%
%%==================================%%

\abstract{We propose a new approach to perform the boosted difference of convex functions algorithm (BDCA) on non-smooth and non-convex problems involving the difference of convex (DC) functions. The recently proposed BDCA uses an extrapolation step from the point computed by the classical DC algorithm (DCA) via a line search procedure in a descent direction to get an additional decrease of the objective function and accelerate the convergence of DCA. However, when the first function in DC decomposition is non-smooth, the direction computed by BDCA can be ascent and a monotone line search cannot be performed. In this work, we proposed a monotone improved boosted difference of convex functions algorithm (IBDCA) for certain types of non-smooth DC programs, namely those that can be formulated as the difference of a possibly non-smooth function and a smooth one. We show that any cluster point of the sequence generated by IBDCA is a critical point of the problem under consideration and that the corresponding objective value is monotonically decreasing and convergent. We also present the global convergence and the convergent rate under the Kurdyka–Łojasiewicz property. The applications of IBDCA in image recovery show the effectiveness of our proposed method. The corresponding numerical experiments demonstrate that our IBDCA outperforms DCA and other state-of-the-art DC methods in both computational time and number of iterations.}

\keywords{DC programs, boosted difference of convex functions algorithm, global convergence, image recovery.}

%%\pacs[JEL Classification]{D8, H51}

%%\pacs[MSC Classification]{35A01, 65L10, 65L12, 65L20, 65L70}

\maketitle

\section{Introduction}
In this work, we mainly focus on the difference of convex (DC) optimization problem

\begin{equation}
\min_{x\in\mathbb{R}^m} \phi(x) \coloneqq g(x)-h(x), \label{DCproblem}
\end{equation}
where $g,h: \mathbb{R}^m\to\mathbb{R}\cup\left \{ +\infty \right \}$ are proper convex functions, and $\phi(x)$ satisfies:
\begin{equation}
\inf_{x \in \mathbb{R}^{m}}\phi(x)> -\infty.
\end{equation}
Here, we also make conventions on the definition of $\infty$ as follows \cite{nonsmoothBDCA,linearBDCA}:
\begin{gather*}
(+\infty)-(+\infty)=(+\infty)-(+\infty)=+\infty,
\\
(+\infty)-c=+\infty \qquad\mathrm{and}\qquad c-(+\infty)=-\infty,\qquad \forall c\in\left ] \infty,+\infty  \right [. 
\end{gather*}

The well-known difference of convex functions algorithm (DCA) \cite{AN1996DCA,TAO1986DCA} is one of the most efficient algorithms to solve \eqref{DCproblem}. Extensive research and development on DCA have spanned more than 30 years \cite{LE2018DCAthirty, TAO1997ConvexDC,Thi2023Open}. The DCA has found widespread applications in various fields, including machine learning, sparse optimization, deep learning and image processing \cite{Geremew2018, Gotoh2018SparseDC, LZOX2015,CDiNN2022}. Over the years, several variants of the DCA have been explored both theoretically and practically. Notable variants of the DCA include the subgradient method \cite{Ferreira2021boosted, Bagirov2021Augmented}, proximal algorithms \cite{Cruz2020generalized,Souza2016global,li2017quadratic}, and inertial methods \cite{De2019inertial}. 

Recently, \cite{aragon2018accelerating,nonsmoothBDCA} proposed a boosted DC algorithm (BDCA) to solve \eqref{DCproblem} and significantly accelerate the convergence of DCA. Among them, \cite{aragon2018accelerating} focuses on the case when both $g$ and $h$ in \eqref{DCproblem} are smooth. \cite{nonsmoothBDCA} later extended to the case where $g$ is smooth and $h$ is possibly non-smooth.  
% as well as linearly constrained DC programs \cite{linearBDCA}. 
BDCA performs an additional line search step in a descent direction based on the points computed by DCA. In addition to speeding up the convergence speed of DCA, BDCA can also effectively prevent the algorithm from falling into local optima due to the line search step, thus increasing the possibility of converging to the global optimal solution\cite[Example 3.1]{nonsmoothBDCA}.

A specific type of DC problem, where the first component in the DC decomposition may be non-smooth while the second is smooth, frequently appears in various models in the fields of image and signal processing, including applications such as image denoising \cite{Li2016variational,Sciacchitano2015variational,CHEN2018Sparse,WU2017Robust}, non-convex regularization \cite{Zhang2020tv,Gu2017tvscad,Lanza2016convex}, compressed sensing \cite{Nguyen2014dc}, phase retrieval \cite{Huang2019dc}, and dictionary learning \cite{Li2021fast}. However, when the function $g$ is non-smooth, the search direction in the BDCA can lead to ascent directions (see \cite[Example 3.2]{nonsmoothBDCA} and \cite[Remark 3.1]{aragon2018accelerating}). Consequently, BDCA may not effectively address this type of DC problem. To address this issue, Ferreira et al. \cite{ferreira2021nmBDCA} introduced a non-monotone boosted difference of convex functions algorithm (nmBDCA) and further applied nmBDCA to image denoising \cite{FERREIRA2024Image}. This algorithm employs a non-monotone line search and is applicable when both components of the DC function are non-smooth. The nmBDCA permits controlled growth in the objective function values, managed by specific parameters during the non-monotone line search step. Several other types of accelerated DC algorithm are also developed for specific models. For example, \cite{niu2019high} boosts the projected DC method via a sum-of-square decomposition in the portfolio model of the first 4th-order moments. \cite{zhang2024boosted} accelerates the DCA with an exact line search to address linearly constrained polynomial programs based on a power-sum-DC decomposition.

The purpose of this paper is to propose an improved boosted difference of convex functions algorithm (IBDCA) to extrapolate DCA via monotone line search when only the smoothness of the second DC component is guaranteed. A key observation is that, in this type of DC problem, the BDCA search direction may lead to an ascent for the current iteration's point, which, however, is a descent for the point generated by the last DCA iteration. Based on this observation, the core idea of our proposed IBDCA is to perform a linear search in a descending direction from the point generated in the previous DCA iteration, instead of from the current iteration point like other BDCA-type methods. By setting appropriate parameters and search criteria, our algorithm abandons the additional search step of BDCA for points where the search direction may be ascending, instead adopting the step size of DCA. For points where the search direction is descending, a linear search and a step size similar to those of BDCA are used. To show the effectiveness of our method, we present an application of IBDCA to an important high-dimensional DC problem in image recovery: Cauchy noise removal.
%and the multiplicative Gamma noise removal.
Numerical experiments show that IBDCA outperforms DCA and other state-of-the-art algorithms in both restoration results and computation time. Furthermore, similar to BDCA, IBDCA has a greater possibility of escaping from poor local solutions than DCA and nmBDCA.

%%% contribution %%%
% To the best of our knowledge, our work is the first monotone BDCA-type method that can be used when $g$ is non-smooth. 
It is worth mentioning that BDCA adopts the search direction proposed earlier in \cite{fukushima1981generalized,mine1981minimization}, and that they also came up with the idea of conducting a line search from the point generated in the previous iteration. However, the Fukushima-Mine (F-M) method is very different from our proposed method. Actually, the maximum step size of the F-M method is equal to $1$ (the same as DCA), while IBDCA can accelerate DCA in many cases. We demonstrate that, in many situations, the linear search of our proposed method can achieve results similar to those of BDCA. On the other hand, in \cite{mine1981minimization}, the Fukushima-Mine method did not provide an accurate estimate of the descent of the objective function, nor did it specify concrete linear search criteria. In contrast, we prove that under our designed search criteria, our method exhibits similar convergence and descent properties to BDCA. Therefore, our proposed IBDCA extends BDCA to a broader class of non-smooth objective functions. We also give the global convergence and convergence rate of the IBDCA based on Kurdyka–Łojasiewicz (K-Ł) property, which covers a great number of models in application such as image processing.

% The outline is not required, but we show an example here.
The rest of this paper is organized as follows. In \cref{sec:pre}, we recall some preliminary concepts and results and give the assumptions throughout the paper. The proposed IBDCA and basic properties are presented in \cref{sec:MBDCA}. The convergence analysis and convergence rate under K-Ł property are given in \cref{sec:KL}. In \cref{sec:exp}, we provide applications of IBDCA in image recovery, where we compare the performance of IBDCA with state-of-the-art algorithms. Finally, some conclusions and future research are briefly discussed in \cref{sec:end}.

\section{Preliminaries}
\label{sec:pre}

In this section, we recall some preliminary concepts and results that will be used in the sequel. We also state our assumptions imposed on \eqref{DCproblem}. Throughout this paper, the inner product of two vectors $x, y \in \mathbb{R}^{m}$ is denoted by $\langle x, y\rangle$, while $\|\cdot\|$ denotes the induced norm. We use $\mathbb{B}(x, r)$ to denote the open ball centered at $x$ with radius $r>0$. The gradient of a function $f: \mathbb{R}^{m} \rightarrow \mathbb{R} \cup\{+\infty\}$ which is differentiable at some point $x \in \mathbb{R}^{m}$ is denoted by $\nabla f(x)$.

\subsection{Basic conceptions and results in convex analysis}
\label{subsec:opt}

In this subsection, we review some basic definitions and related conclusions regarding convex analysis and generalized derivatives of non-smooth functions.

\begin{definition}\label{def:dompro}
Let $f: \mathbb{R}^{m} \rightarrow \mathbb{R} \cup\{+\infty\}$ be an extended real-valued function, where $\mathbb{R}^{m} \rightarrow \mathbb{R} \cup\{+\infty\}$ denotes the extended real line. The domain of $f$ is defined as
$$\operatorname{dom} f=\left\{x \in \mathbb{R}^{m}: f(x)<+\infty\right\}.$$
$f$ is said to be proper if $\operatorname{dom} f\neq \emptyset$.
\end{definition}

\begin{definition}\label{def:convex}
  A function $f: \mathbb{R}^{m} \rightarrow \mathbb{R} \cup\{+\infty\}$ is said to be convex if
$$f(\lambda x+(1-\lambda) y) \leq \lambda f(x)+(1-\lambda) f(y) \quad \text { for all } x, y \in \mathbb{R}^{m} \text { and } \lambda \in[0,1].$$
$f$ is called strongly convex with modulus $\rho>0$ if $f-\frac{\rho}{2}\|\cdot\|^{2}$ is convex.
\end{definition}

\begin{definition}\label{def:lip}
  A function $F: \mathbb{R}^{m} \rightarrow \mathbb{R}^{n}$ is said to be Lipschitz continuous if there exists some constant $L\geq 0$ such that
$$\left \| F(x)-F(y) \right \|\leq L\left \| x-y \right \|, \quad \text { for all } x, y \in \mathbb{R}^{m}.$$
$F$ is said to be locally Lipschitz if, for every $x\in\mathbb{R}^{m}$, there exists a neighborhood $U_{x}$ of $x$ such that $F$ restricted to $U_{x}$ is Lipschitz continuous.
\end{definition}

% \begin{definition}\label{def:subdiff}
%   For a function $f: \mathbb{R}^{m} \rightarrow \mathbb{R} \cup\{+\infty\}$, its subdifferential $\partial f(\tilde{x})$ at $x\in \operatorname{dom}f$ is defined as
% $$\partial f(\tilde{x})=\left\{ y\in\mathbb{R}^{m}:f(x)-f(\tilde{x})\geq \left \langle y,x-\tilde{x} \right \rangle,\forall x\in \mathbb{R}^{m}.  \right\} $$
% For $\tilde{x}\notin \operatorname{dom}f$, we define $\partial f(\tilde{x})=\emptyset$. Especially, if $f$ is differentiable at $\tilde{x}$, then $\partial f(\tilde{x})=\left\{ \nabla f(\tilde{x})\right\}$. 

% $x\in \mathbb{R}^{m}$ is said to be a critical point of $f$ if $0\in\partial f(\tilde{x})$.
% \end{definition}
\begin{proposition}[\cite{Pallaschke2016AnEP}]\label{subdiff_monotone}
  Let $f: \mathbb{R}^{m} \rightarrow \mathbb{R} \cup\{+\infty\}$ be an extended real-valued function, then $f$ is convex if and only if
  \begin{displaymath}
    \left\langle u-v,x-y\right\rangle\geq 0,\quad \text { for all } x, y \in \mathbb{R}^{m} \text { and } u\in\partial f(x),v\in\partial f(y).
  \end{displaymath}
$f$ is $\rho$-strongly convex ($\rho>0$) if and only if
  \begin{displaymath}
    \left\langle u-v,x-y\right\rangle\geq\rho\left\|x-y\right\|^{2},\quad \text { for all } x, y \in \mathbb{R}^{m} \text { and } u\in\partial f(x),v\in\partial f(y).
  \end{displaymath}
  \label{th2.5}
\end{proposition}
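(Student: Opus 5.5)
The plan is to prove the two directions of each equivalence separately, using the subgradient inequality $f(z)\ge f(x)+\langle u,z-x\rangle$ for $u\in\partial f(x)$. I read the statement with the implicit standing hypotheses under which it is true: $f$ proper and lower semicontinuous (equivalently, convex with $\partial f\neq\emptyset$ on the relative interior of its domain), and $\partial f$ taken in the limiting (Mordukhovich) sense, which reduces to the convex subdifferential whenever $f$ is convex.

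\emph{Convex $\Rightarrow$ monotone.} Take $u\in\partial f(x)$ and $v\in\partial f(y)$. The subgradient inequality at $x$ evaluated at $y$ gives $f(y)\ge f(x)+\langle u,y-x\rangle$, and at $y$ evaluated at $x$ gives $f(x)\ge f(y)+\langle v,x-y\rangle$. Both $x,y$ lie in $\operatorname{dom} f$ because their subdifferentials are nonempty, so all values are finite. Adding the two inequalities yields $\langle u-v,x-y\rangle\ge0$.

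\emph{Strongly convex $\Rightarrow$ strongly monotone.} Apply the previous step to $f-\frac{\rho}{2}\|\cdot\|^2$, which is convex by definition. By the sum rule, its subdifferential at $x$ is $\partial f(x)-\rho x$. Hence $\langle (u-\rho x)-(v-\rho y),x-y\rangle\ge0$, which rearranges to $\langle u-v,x-y\rangle\ge\rho\|x-y\|^2$.

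\emph{Monotone $\Rightarrow$ convex.} This direction is the main obstacle. The cleanest route is to invoke the Poliquin / Correa–Jofré–Thibault theorem: a proper lsc function whose limiting (or Clarke) subdifferential is monotone is convex. If I had to argue it directly, I would proceed as follows.
\begin{enumerate}
\item Restrict to a segment by setting $\varphi(t)=f(x+t(y-x))$.
\item Use the Zagrodny approximate mean value theorem to obtain, for $t_1<t_2$, points $s_k\to s\in[t_1,t_2)$ and subgradients $w_k\in\partial\varphi(s_k)$ with $\liminf_k w_k(t_2-s_k)\ge \varphi(t_2)-\varphi(t_1)$, up to a vanishing error.
\item Combine this with monotonicity of $\partial\varphi$, inherited from that of $\partial f$ via the chain rule along the line, to conclude that difference quotients of $\varphi$ are nondecreasing.
\item Deduce that $\varphi$ is convex on every segment, hence $f$ is convex.
\end{enumerate}

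\emph{Strongly monotone $\Rightarrow$ strongly convex.} Reduce to the previous case. The function $f-\frac\rho2\|\cdot\|^2$ has subdifferential $\partial f-\rho\,\mathrm{Id}$, which is monotone by the same rearrangement as above. It is therefore convex, which is exactly $\rho$-strong convexity.

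The nonsmooth mean value step in the third direction is the only delicate point. Everything else is a two-line computation.
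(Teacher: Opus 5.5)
The paper does not prove this proposition; it only cites it. The only part it ever uses is the forward implication ($g$ $\rho$-strongly convex $\Rightarrow$ $\partial g$ $\rho$-strongly monotone), in \eqref{proofeq8}.

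Your forward arguments are the standard proof and are complete. Adding the two subgradient inequalities gives monotonicity, and applying that to $f-\frac{\rho}{2}\|\cdot\|^2$, with $\partial(f-\frac{\rho}{2}\|\cdot\|^2)=\partial f-\rho\,\mathrm{Id}$, gives strong monotonicity.

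You are also right that the converse cannot be taken literally. With the convex subdifferential, which is how the paper uses $\partial$, the function $f(x)=-|x|$ on $\mathbb{R}$ has $\partial f(x)=\emptyset$ for every $x$. The monotonicity condition then holds vacuously although $f$ is not convex. Even for the limiting subdifferential, lower semicontinuity is indispensable: take $f=0$ off the origin and $f(0)=1$.

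So reading the statement for proper lsc $f$ with the limiting (or Clarke) subdifferential, and citing Poliquin / Correa--Jofr\'e--Thibault, is a correct completion. Your reduction of the strongly monotone case to the monotone case is also fine. One small correction: drop the parenthetical equating ``proper and lsc'' with ``convex with $\partial f\neq\emptyset$ on the relative interior of the domain''. These are unrelated conditions.

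The direct argument you sketch as a fallback has a genuine gap at step 3. Monotonicity of $\partial\varphi$ cannot be inherited from $\partial f$ through a chain rule. The inclusion $\partial\varphi(t)\subseteq\{\langle w,y-x\rangle: w\in\partial f(x+t(y-x))\}$ for the limiting subdifferential needs the qualification condition $\partial^{\infty}f(x+t(y-x))\cap(y-x)^{\perp}=\{0\}$. That condition fails routinely for lsc functions.

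For example, let $f(u,v)=-\sqrt{v}$ for $v\ge 0$ and $f=+\infty$ otherwise, and take the segment from $(0,0)$ to $(1,0)$. Then $\varphi\equiv 0$, so $\partial\varphi(t)=\{0\}$, while $\partial f(t,0)=\emptyset$. Subgradients of $\varphi$ need not come from subgradients of $f$, and monotonicity of $\partial f$ then says nothing about them.

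Your step 2 also misquotes the mean value inequality. In one dimension it bounds $\liminf_k w_k(t_2-t_1)$ below by $\varphi(t_2)-\varphi(t_1)$, not $\liminf_k w_k(t_2-s_k)$. With $\varphi(t)=t^2$ on $[0,1]$, your version would require $2s(1-s)\ge 1$.

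The standard proof never restricts to a line. It applies the approximate mean value inequality to $f$ itself on a segment $[y,x]\subset\mathbb{R}^m$. This gives $c\in[y,x)$, $x_k\to c$ and $x_k^{*}\in\partial f(x_k)$ with $\liminf_k\langle x_k^{*},c-x_k\rangle\ge 0$ and $\liminf_k\langle x_k^{*},x-y\rangle\ge f(x)-f(y)$.

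Now test monotonicity, $\langle x_k^{*}-x^{*},x_k-x\rangle\ge 0$, against a fixed $x^{*}\in\partial f(x)$. The first condition absorbs the components of $x_k-c$ transverse to the segment. The result is $f(y)\ge f(x)+\langle x^{*},y-x\rangle$ for all $y$. Convexity then follows because, by a proximal-point argument (minimize $f+\frac{1}{2\varepsilon}\|\cdot-w\|^{2}$), $f$ is the supremum of these affine minorants.
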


\begin{definition}
Let $f:\mathbb{R}^{m}\to\mathbb{R}\cup\left\{ +\infty \right\}$ be locally Lipschitz continuous, the Clarke subdifferential $\partial_{C}f(\tilde{x})$ of $f$ at $\tilde{x}$ is defined as
$$
\partial_{C}f(\tilde{x})=\operatorname{co}\left\{\lim_{x \to \tilde{x},x\in D_{f}}\nabla f(x)  \right\},
$$
where $\operatorname{co}$ represents the convex hull, $D_f$ is the set of non-differentiable points of $f$, which is of zero Lebesgue measure due to the Rademacher theorem \cite{evans2018measure}.
\end{definition}

\begin{proposition}[Property of Clarke subdifferential, \cite{clarke1990optimization}]

(1) If $f$ is convex in a neighborhood of $x$, then
$$
\partial_{C}f(x)=\partial f(x).
$$

(2) If $f$ is differentiable at $x$, then we have
$$
\partial_{C}f(x)=\left\{ \nabla f(x) \right\}.
$$

(3) For any constant $s\in\mathbb{R}$, we have
$$
\partial_{C}(sf)(x)=s\partial_{C}f(x).
$$

(4) $$\partial_{C}(f+g)(x)\subseteq \partial_{C}f(x)+\partial_{C}g(x).$$ In addition, the equality holds if either $f$ or $g$ is differentiable at $x$.
\end{proposition}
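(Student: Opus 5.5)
These four properties are classical facts from Clarke's calculus. I would prove them in the order (2), (3), (4), (1), working from the definition of $\partial_C f$ as the convex hull of limits of gradients taken along sequences of differentiability points. I read ``$D_f$'' in the definition as the set of points where $f$ is differentiable, since Rademacher's theorem makes the complement null.

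Equivalently, I would use Clarke's generalized directional derivative
$$f^\circ(x;v)=\limsup_{y\to x,\,t\downarrow 0}\frac{f(y+tv)-f(y)}{t},$$
together with the characterization
$$\partial_C f(x)=\{\xi:\langle \xi,v\rangle\le f^\circ(x;v)\ \text{for all } v\}.$$
This characterization is itself a standard theorem in \cite{clarke1990optimization}, and I would invoke it rather than reprove it. Its proof relies on the fact that gradient limits avoiding any null set generate the same hull, and that is the genuinely deep step.

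Item (3) follows directly from the gradient-limit definition. For $s\ge 0$, $\nabla(sf)=s\nabla f$ on the same differentiability set, and taking limits and convex hulls commutes with scaling. For $s<0$ it suffices to treat $s=-1$. Here $(-f)^\circ(x;v)=f^\circ(x;-v)$, which gives $\partial_C(-f)=-\partial_C f$. Alternatively, $-f$ has the same differentiability set as $f$, with gradients negated.

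Item (4) uses subadditivity of the $\limsup$, which gives $(f+g)^\circ\le f^\circ+g^\circ$. Since $f^\circ(x;\cdot)$ and $g^\circ(x;\cdot)$ are the support functions of the compact convex sets $\partial_C f(x)$ and $\partial_C g(x)$, the sum $f^\circ+g^\circ$ is the support function of $\partial_C f(x)+\partial_C g(x)$. The inclusion then follows by comparing support functions.

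For the equality case in (4), I would assume $g$ is strictly differentiable at $x$; this holds in particular if $g$ is $C^1$ near $x$, which is the situation in the paper, where the smooth component is $h$. Then $g^\circ(x;v)=\langle\nabla g(x),v\rangle$ is linear. Applying the inclusion to $f=(f+g)+(-g)$ and using (3) gives the reverse inclusion $\partial_C f\subseteq\partial_C(f+g)-\nabla g(x)$, hence equality.

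Item (2) needs a similar qualification. For mere Fréchet differentiability it can fail, for example for $x^2\sin(1/x)$ at $0$. So I would state and prove (2) under strict differentiability, or $C^1$ near $x$. Then $f^\circ(x;v)=\langle\nabla f(x),v\rangle$, and the only subgradient satisfying $\langle\xi,v\rangle\le\langle\nabla f(x),v\rangle$ for all $v$ is $\nabla f(x)$.

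Item (1) is where I expect the main obstacle. For convex $f$ near $x$ I would show $f^\circ(x;v)=f'(x;v)$, the ordinary directional derivative. The difference quotient $t\mapsto (f(y+tv)-f(y))/t$ is nondecreasing in $t$, so for fixed small $\varepsilon$ it is bounded above by $(f(y+\varepsilon v)-f(y))/\varepsilon$. By continuity of $f$, this bound tends to $(f(x+\varepsilon v)-f(x))/\varepsilon$ as $y\to x$. Letting $\varepsilon\downarrow0$ then gives $f^\circ(x;v)\le f'(x;v)$, and the reverse inequality is trivial. Since $f'(x;\cdot)$ is the support function of the convex subdifferential $\partial f(x)$, the two sets coincide.

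The delicate point is controlling the $\limsup$ uniformly in $y$, which requires the monotonicity and continuity argument above. I would also note explicitly that the hypothesis of (1) gives local Lipschitz continuity of $f$, which is needed for $f^\circ$ to be finite.
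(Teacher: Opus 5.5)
Your proposal is correct. The paper gives no proof of this proposition and simply cites \cite{clarke1990optimization}. Your arguments are the standard ones from that source: the $f^\circ$/support-function comparison for (3)--(4), the monotone difference-quotient bound giving $f^\circ(x;v)=f'(x;v)$ for (1), and $f^\circ(x;v)=\langle\nabla f(x),v\rangle$ for (2), all resting on the equivalence of the gradient-limit and $f^\circ$ descriptions of $\partial_C f$.

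Your strict-differentiability qualification is genuinely necessary. Besides your counterexample to (2), take $f(t)=t^2\sin(1/t)$ and $g=-f$. Both are differentiable at $0$, yet $\partial_C(f+g)(0)=\{0\}\neq[-2,2]=\partial_C f(0)+\partial_C g(0)$, so the equality clause of (4) also fails as literally stated. The correction costs the paper nothing, since (2) and (4) are only applied to the $C^1$ function $h$, in the proof of Theorem~\ref{thm4.6}.
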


\begin{definition}\label{def:directional_dev}
Let $f:\mathbb{R}^{m}\to\mathbb{R}\cup\left\{ +\infty \right\}$ be an extended real-valued function. For $x\in\operatorname{dom}f$ and $d\in\mathbb{R}^{m}$, the directional derivative of $f$ at $x$ in the direction $d$ is defined as:
$$
f^{\prime}(x ; d):=\lim _{t \downarrow 0} \frac{f(x+t d)-f(x)}{t}.
$$
\end{definition}

\subsection{Assumptions}
Throughout this paper, we impose the following three hypotheses on $g$ and $h$ in \eqref{DCproblem}.
\begin{assumption}\label{hypo1}
 Both $g$ and $h$ are strongly convex with modulus $\rho>0$;
\end{assumption}

\begin{assumption}\label{hypo2}
For any $x\in\operatorname{dom}g$ and any $d\in\mathbb{R}^{m}$,
\begin{equation}
{g}'(x;d)=\sup_{v\in\partial g(x)}\left\langle v,d\right\rangle; \label{hypo:eq2}
\end{equation}
\end{assumption}

\begin{assumption}\label{hypo3}
The function $h$ is continuously differentiable with locally Lipschitz gradient on an open set containing $\operatorname{dom}g$ and
\begin{equation}
\inf_{x\in\mathbb{R}^{m}}\phi(x)>-\infty.\label{hypo:eq3}
\end{equation}
\end{assumption}

\begin{remark}
\cref{hypo1} is not restrictive since any DC decomposition of $\phi$ as $\phi=g-h$ can be expressed as $\phi=\left ( g+\frac{\rho}{2}\left\| \cdot \right\|^2\right )-\left ( h+\frac{\rho}{2}\left\| \cdot \right\|^2\right )$. The equality \eqref{hypo:eq2} holds for all $x\in\operatorname{ri dom}g$ (see \cite{rockafellar1970convex}). Especially, if $\operatorname{dom}g=\mathbb{R}^m$, then \cref{hypo2} is clearly valid. \eqref{hypo:eq3}
is common in the context of DC programming, see, e.g., \cite{nonsmoothBDCA,aragon2018accelerating,Cruz2020generalized}.
\end{remark}

Under these hypotheses, the following necessary optimality condition holds.

\begin{proposition}[First-order necessary optimality condition,  \cite{fukushima1981generalized}]
If $x^{\ast}\in\operatorname{dom}\phi$ is a local minimizer of \cref{DCproblem}, then $x^{\ast}$ is a critical point, i.e.,
$$
\nabla h(x^{\ast})\in\partial g(x^{\ast}).
$$
\end{proposition}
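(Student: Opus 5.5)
The plan is to argue through directional derivatives at the local minimizer, using \cref{hypo2} to turn a one-sided inequality into a subdifferential inclusion. Since $x^{\ast}$ is a local minimizer and $\phi(x^{\ast})$ is finite, we have $x^{\ast}\in\operatorname{dom}g$. \cref{hypo3} says $h$ is continuously differentiable on an open set containing $\operatorname{dom}g$. Hence for every $d\in\mathbb{R}^m$ and all small $t>0$,
\[
0\le \frac{\phi(x^{\ast}+td)-\phi(x^{\ast})}{t}=\frac{g(x^{\ast}+td)-g(x^{\ast})}{t}-\frac{h(x^{\ast}+td)-h(x^{\ast})}{t}.
\]
If $x^{\ast}+td\notin\operatorname{dom}g$, the left-hand side is $+\infty$ and the inequality holds trivially. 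For convex $g$ the difference quotient is monotone in $t$, so its limit $g'(x^{\ast};d)$ exists in $\mathbb{R}\cup\{\pm\infty\}$, and the second quotient tends to $\langle\nabla h(x^{\ast}),d\rangle$. Letting $t\downarrow0$ gives
\[
g'(x^{\ast};d)\ge\langle\nabla h(x^{\ast}),d\rangle\qquad\text{for all } d\in\mathbb{R}^m.
\]

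Next I invoke \cref{hypo2}, which gives $g'(x^{\ast};d)=\sup_{v\in\partial g(x^{\ast})}\langle v,d\rangle=\sigma_{\partial g(x^{\ast})}(d)$, the support function of $\partial g(x^{\ast})$. So $\sigma_{\partial g(x^{\ast})}(d)\ge\langle\nabla h(x^{\ast}),d\rangle$ for every $d$. The set $\partial g(x^{\ast})$ is closed and convex. It is also nonempty: otherwise the supremum would be $-\infty$, contradicting the inequality for $d=0$ (with the convention $\sup\emptyset=-\infty$). A closed convex set $C$ satisfies $y\in C$ if and only if $\langle y,d\rangle\le\sigma_C(d)$ for all $d$. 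Therefore $\nabla h(x^{\ast})\in\partial g(x^{\ast})$.

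The step needing most care is this last separation argument. Suppose $\nabla h(x^{\ast})\notin\partial g(x^{\ast})$. Strict separation then yields a $d$ with $\langle\nabla h(x^{\ast}),d\rangle>\sup_{v\in\partial g(x^{\ast})}\langle v,d\rangle$, which contradicts the previous inequality. The role of \cref{hypo2} is precisely to replace the possibly larger value $g'(x^{\ast};d)$ at boundary points of $\operatorname{dom}g$ with the support function. A secondary point is justifying the passage to the limit when $g'(x^{\ast};d)=+\infty$; in that case the inequality is trivially true for that $d$.
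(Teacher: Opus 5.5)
Your proof is correct. The passage to the limit, including the cases where the quotient or $g'(x^{\ast};d)$ equals $+\infty$, is handled properly, and so are the nonemptiness of $\partial g(x^{\ast})$ and the separation step.

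The paper gives no proof of its own here; it only cites Fukushima--Mine. So the useful comparison is with the standard argument, which is shorter and shows that Assumption~\ref{hypo2} is not needed at all. As you note, $x^{\ast}\in\operatorname{dom}g$, so $x^{\ast}$ lies in the open set where $h$ is differentiable. Convexity of $h$ then gives $h(x)\ge h(x^{\ast})+\langle\nabla h(x^{\ast}),x-x^{\ast}\rangle$ for all $x$. Combined with $g(x)-h(x)\ge g(x^{\ast})-h(x^{\ast})$ near $x^{\ast}$, this says the convex function $g-\langle\nabla h(x^{\ast}),\cdot\rangle$ has a local, hence global, minimum at $x^{\ast}$. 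That is exactly $\nabla h(x^{\ast})\in\partial g(x^{\ast})$. The same argument with any $w\in\partial h(x^{\ast})$ gives the classical inclusion $\partial h(x^{\ast})\subseteq\partial g(x^{\ast})$, with no differentiability of $h$.

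Even inside your own framework the separation step can be dropped. The difference quotient of $g$ is nondecreasing in $t$, so evaluating at $t=1$ gives $g(x^{\ast}+d)-g(x^{\ast})\ge g'(x^{\ast};d)\ge\langle\nabla h(x^{\ast}),d\rangle$ for every $d$. This is the subgradient inequality directly.

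Hence your closing remark, that the role of Assumption~\ref{hypo2} is ``precisely'' to bridge $g'(x^{\ast};\cdot)$ and the support function, is misleading. The condition $g'(x;d)\ge\langle y,d\rangle$ for all $d$ characterizes $y\in\partial g(x)$ at every $x\in\operatorname{dom}g$, boundary points included. Assumption~\ref{hypo2} is needed only where an \emph{upper} bound on $g'(x;d)$ is required, as in the descent estimate of Proposition~\ref{prop3.2}.

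Your route ties the result to the directional-derivative viewpoint used later in the paper. The direct route is shorter and more general: it needs no Assumption~\ref{hypo2} and no separation theorem, and it gives nonemptiness of $\partial g(x^{\ast})$ for free.
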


\section{Monotone Boosted DC Algorithms}
\label{sec:MBDCA}

In this section, we present the IBDCA to solve \cref{DCproblem}, and further provide the convergence of this proposed algorithm.

\subsection{Improved BDCA}

We first present the conceptual statement of DCA and BDCA. The main idea of the DCA is to use the affine majorization of $h$ in the problem \cref{DCproblem} to approximate $f(x)$, and then minimize the resulting convex function. The main steps of the DCA are shown in \cref{alg:DCA}.

% \begin{algorithm}
% \caption{Calculate $y = x^n$}\label{algo1}
% \begin{algorithmic}[1]
% \Require $n \geq 0 \vee x \neq 0$
% \Ensure $y = x^n$ 
% \State $y \Leftarrow 1$
% \If{$n < 0$}\label{algln2}
%         \State $X \Leftarrow 1 / x$
%         \State $N \Leftarrow -n$
% \Else
%         \State $X \Leftarrow x$
%         \State $N \Leftarrow n$
% \EndIf
% \While{$N \neq 0$}
%         \If{$N$ is even}
%             \State $X \Leftarrow X \times X$
%             \State $N \Leftarrow N / 2$
%         \Else[$N$ is odd]
%             \State $y \Leftarrow y \times X$
%             \State $N \Leftarrow N - 1$
%         \EndIf
% \EndWhile
% \end{algorithmic}
% \end{algorithm}

\begin{algorithm}
\caption{DCA ((DC Algorithm, \cite{AN1996DCA})}
\label{alg:DCA}
\begin{algorithmic}[1]
\State  Let $x^{0}$ be any initial point, select any $y^{0}\neq x^{0}$ and set $k \coloneqq 0$.
\State  Solve the strongly convex minimization problem to obtain the unique solution $y^{k}$:
$$
\min_{x \in \mathbb{R}^m} g(x)-\left\langle\nabla h\left(x^k\right), x\right\rangle
$$

\State  If $y^{k} = x^{k}$, then stop and return $x^{k}$. Otherwise, we set $x^{(k+1)} \coloneqq y^k$, set $k \coloneqq k + 1$, and go to the previous step.
\end{algorithmic}
\end{algorithm}
The main idea of BDCA is to extend the DCA by performing a line search along the direction $d^{k}$ obtained from each DCA iteration. The main steps of BDCA are shown in \cref{alg:BDCA}.
\begin{algorithm}
\caption{BDCA (Boosted DC algorithm, \cite{aragon2018accelerating})}
\label{alg:BDCA}
\begin{algorithmic}[1]
\State{Fix $\alpha > 0$ and $0 < \beta < 1$. Let $x^{0}$ be any initial point, and set $k \coloneqq 0$.}

\State{Solve the strongly convex minimization problem to obtain the unique solution $y^{k}$:
$$
\min_{x \in \mathbb{R}^m} g(x)-\left\langle\nabla h\left(x^k\right), x\right\rangle
$$
}
\State{Set $d^{k} \coloneqq y^{k} - x^{k}$. If $d^{k} = 0$, stop and return $x^{k}$. Otherwise, go to the next step.}
\State{Take $\overline{\lambda}_{k}>0$ and set $\lambda_{k} \coloneqq \overline{\lambda}_{k}$. When $\phi(y^{k} + \lambda_{k}d^{k}) > \phi(y^{k})  -\alpha \lambda_k\left\|d^k\right\|^2$
, let $\lambda_{k} \coloneqq \beta \lambda_{k}$.}
\State{Set $x^{k+1} \coloneqq y^{k} + \lambda_{k} d^{k}$. If $x^{k+1} \coloneqq x^{k}$, stop and return $x^{k}$; otherwise, set $k \coloneqq k + 1$ and go to step 2.}

\end{algorithmic}
\end{algorithm}

\cite{nonsmoothBDCA} points out that if the first function in the DC decomposition is non-smooth, the line search in BDCA may not be applicable. The idea of nmBDCA allows the line search to be non-monotone, where the objective function value may increase during the line search process. The main steps of nmBDCA are shown in \cref{alg:nmBDCA}.
\begin{algorithm}
\caption{nmBDCA (Nonmonotone boosted DC algorithm, \cite{ferreira2024boosted})}
\label{alg:nmBDCA}
\begin{algorithmic}[1]
\State{Fix $\alpha > 0$ and $0 < \beta < 1$. Let $x^{0}$ be any initial point, and set $k \coloneqq 0$.}

\State{Solve the strongly convex minimization problem to obtain the unique solution $y^{k}$:
$$
\min_{x \in \mathbb{R}^m} g(x)-\left\langle\nabla h\left(x^k\right), x\right\rangle
$$
}
\State{Set $d^{k} \coloneqq y^{k} - x^{k}$. If $d^{k} = 0$, stop and return $x^{k}$. Otherwise, go to the next step.}
\State{Take $\overline{\lambda}_{k}>0$, set $\lambda_{k} \coloneqq \overline{\lambda}_{k}$, and choose $v_{k}>0$. When $\phi(y^{k} + \lambda_{k}d^{k}) > \phi(y^{k})  -\alpha \lambda_k\left\|d^k\right\|^2 + v_{k}$
, let $\lambda_{k} \coloneqq \beta \lambda_{k}$.}
\State{Set $x^{k+1} \coloneqq y^{k} + \lambda_{k} d^{k}$. If $x^{k+1} \coloneqq x^{k}$, stop and return $x^{k}$; otherwise, set $k \coloneqq k + 1$ and go to step 2.}

\end{algorithmic}
\end{algorithm}
However, when the direction of $d^{k}$ is ascending, the line search of nmBDCA cannot reduce the objective function value. We thus propose a monotonically decreasing improved BDCA to solve such DC problems.

\begin{algorithm}
\caption{IBDCA(Improved boosted DC algorithm)}
\label{alg:modified_BDCA}
\begin{algorithmic}[1]
\State{Fix $\overline{\lambda} > 1$ and $0 < \beta < 1$. Let $x^{0}$ be any initial point, and set $k \coloneqq 0$.}

\State{Solve the strongly convex minimization problem to obtain the unique solution $y^{k}$:
\begin{equation}
\min_{x \in \mathbb{R}^m} g(x)-\left\langle\nabla h\left(x^k\right), x\right\rangle
\label{strongconvex}
\end{equation}

}
\State{Set $d^{k} \coloneqq y^{k} - x^{k}$. If $d^{k} = 0$, stop and return $x^{k}$. Otherwise, go to the next step.}
\State{Take $\lambda_k>0$ and choose $\alpha_{k}>0$. Process $\lambda_{k} \coloneqq \beta\lambda_{k}$ until $\lambda^{k}$ satisfies: 
\begin{equation}
\phi(x^{k} + \lambda_{k}d^{k}) \leq \phi(x^{k})  -\alpha \lambda_k\left\|d^k\right\|^2,
\label{eq5}
\end{equation}
and
\begin{equation}
\phi(x^{k} + \lambda_{k}d^{k}) \leq \phi(y^{k}).
\label{proofcritical1}
\end{equation}
Set $\lambda^{k}=1$ and go to the next step, if $\lambda^{k} \leq 1$. }
\State{Set $x^{k+1} \coloneqq x^{k} + \lambda_{k} d^{k}$. If $x^{k+1} \coloneqq x^{k}$, stop and return $x^{k}$; otherwise, set $k \coloneqq k + 1$ and go to step 2.}

\end{algorithmic}
\end{algorithm}

Our IBDCA uses the same linear search direction as BDCA. However, the main difference is that the linear search of BDCA starts from $y^{k}$, while IBDCA chooses $x^{k}$ as the starting point as the line search. When $\lambda^{k} =1 $ in step 4 of \cref{alg:modified_BDCA}, this iteration is equivalent to performing a step of DCA without the additional descent step of BDCA. This allows us to avoid the situation where BDCA cannot be applied when the search direction is ascending. Furthermore, we will point out later that our proposed IBDCA is a generalization of BDCA, and in many cases, it could conduct a similar additional descent step as BDCA.

\subsection{Basic Properties of IBDCA}

Our IBDCA, similar to DCA, has the following properties:

\begin{proposition}\label{def:algo4}
For any $k \in \mathbb{N}$, the following properties hold:
\begin{enumerate}[label=\arabic*.]
    \item If \(d^{k} = 0\), then \(x^{k}\) is a critical point of \(\phi\).
    \item \(\phi(y^{k}) \leq \phi(x^{k}) - \rho \left\|d^k\right\|^2\) holds.
\end{enumerate}
\label{prop3.1}
\end{proposition}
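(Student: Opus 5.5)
The plan is to extract everything from the optimality condition of the strongly convex subproblem \eqref{strongconvex}. Its objective $g(\cdot)-\langle\nabla h(x^k),\cdot\rangle$ is proper, strongly convex and has a unique minimizer $y^k$. Fermat's rule, together with the sum rule (legitimate because the linear term is finite everywhere), gives
\[
\nabla h(x^k)\in\partial g(y^k).
\]
Both items will follow from this inclusion.

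\emph{Item 1.} If $d^k=0$, then $y^k=x^k$. The inclusion then reads $\nabla h(x^k)\in\partial g(x^k)$, which is exactly the criticality condition stated after the first-order necessary optimality condition. We also need $x^k\in\operatorname{dom} g$ for the statement to be meaningful. This holds because $y^k$ minimizes a proper function, so $y^k\in\operatorname{dom} g$.

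\emph{Item 2.} We use the strong convexity of $g$ and of $h$ from \cref{hypo1}, each contributing $\frac{\rho}{2}\|d^k\|^2$.

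First, $g$ is $\rho$-strongly convex and $\nabla h(x^k)\in\partial g(y^k)$. The subgradient inequality for strongly convex functions, evaluated at $x^k$, gives
\[
g(x^k)\ge g(y^k)+\langle \nabla h(x^k),x^k-y^k\rangle+\tfrac{\rho}{2}\|x^k-y^k\|^2 .
\]
This inequality follows from convexity of $g-\frac{\rho}{2}\|\cdot\|^2$, whose subgradient at $y^k$ is $\nabla h(x^k)-\rho y^k$.

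Second, $h$ is differentiable by \cref{hypo3} and $\rho$-strongly convex, so
\[
h(y^k)\ge h(x^k)+\langle\nabla h(x^k),y^k-x^k\rangle+\tfrac{\rho}{2}\|y^k-x^k\|^2 .
\]

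Adding the two inequalities cancels the inner-product terms. The result is
\[
g(y^k)-h(y^k)\le g(x^k)-h(x^k)-\rho\|d^k\|^2,
\]
which is $\phi(y^k)\le\phi(x^k)-\rho\|d^k\|^2$.

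The main point needing care is the domain bookkeeping. If $x^k\notin\operatorname{dom} g$, then $\phi(x^k)=+\infty$ under the stated conventions and the inequality is trivial. Otherwise all quantities are finite ($h$ is finite on a neighborhood of $\operatorname{dom} g$), so the addition above is legitimate. No other obstacle is expected.
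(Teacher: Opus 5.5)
Your proof is correct and follows the paper's route. You derive $\nabla h(x^k)\in\partial g(y^k)$ from the optimality of $y^k$ in \eqref{strongconvex} and read off criticality when $y^k=x^k$. For item 2 you add the $\rho$-strong convexity inequalities for $g$ (using the subgradient $\nabla h(x^k)$ at $y^k$) and for $h$ (at $x^k$), so the inner products cancel; your domain bookkeeping is a harmless refinement the paper omits.
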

\begin{proof}
For the first assertion, since $y^{k}$ is the unique solution of the strongly convex problem \cref{strongconvex}, we have:
\begin{equation}
\nabla h(x^{k}) \in \partial g(y^{k}).
\label{equ7}
\end{equation}
If $d^{k} = y^{k} - x^{k} = 0$, then
\begin{equation*}
0 \in \partial g(x^{k}) - \nabla h(y^{k}) = \partial \phi(x^{k}).
\label{prop3.1.7}
\end{equation*}

For the second assertion, using the $\rho$-strong convexity of $g$ and $h$, we obtain:
\begin{equation*}
g(x^{k}) - g(y^{k}) \geq -\langle v, d^{k}\rangle + \rho/2 \left\|d^k\right\|^2, \forall v \in \partial g(y^{k}),
\label{firstcritical}
\end{equation*}
\begin{equation*}
h(y^{k}) - h(x^{k}) \geq \langle\nabla h(x^{k}), d^{k}\rangle + \rho/2 \left\|d^k\right\|^2.
\end{equation*}
Then the result follows by summing these two inequalities and applying \cref{equ7}.
\end{proof}

The following result shows that $d^{k}$ is a descent direction of $\phi$ at $x^{k}$, thus providing a reasonable line search strategy for IBDCA. 

\begin{proposition}\label{def:descent_direction}
For any $k \in \mathcal{N}$, the following properties hold:\\
\begin{enumerate}
\item $\quad \phi^{\prime}\left(x^k;d\right) \leq -\rho\left\|d^k\right\|^2;$\\
\item $\quad
\forall \, 0 \leq \alpha<\rho, \exists \, \sigma_{k} >0$, \cref{eq5} holds for any  $\left.\left.\lambda_{k} \in \right] 0, \sigma_k\right].$
\end{enumerate}
\label{prop3.2}
\end{proposition}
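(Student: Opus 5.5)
Part 1 (with $d=d^k$) I plan to prove by combining the directional derivative formula of \cref{hypo2} with strong monotonicity of $\partial g$ (\cref{subdiff_monotone}). Since $h$ is continuously differentiable on an open set containing $\operatorname{dom}g$, the directional derivative splits as
\[
\phi'(x^k;d^k)=g'(x^k;d^k)-\langle \nabla h(x^k),d^k\rangle .
\]
For this to make sense I need $x^k\in\operatorname{dom}g$; this holds for $k\ge1$ because the iterates have finite objective values, and I will assume it for $x^0$. Next, \cref{hypo2} gives $g'(x^k;d^k)=\sup_{v\in\partial g(x^k)}\langle v,d^k\rangle$. The key input is \eqref{equ7}, namely $\nabla h(x^k)\in\partial g(y^k)$.

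For any $v\in\partial g(x^k)$, I apply the strong monotonicity part of \cref{subdiff_monotone} to the pair $x^k$, $y^k$ with subgradients $v$ and $\nabla h(x^k)$:
\[
\langle \nabla h(x^k)-v,\;y^k-x^k\rangle\ge \rho\|d^k\|^2 .
\]
Rearranging gives $\langle v,d^k\rangle-\langle\nabla h(x^k),d^k\rangle\le-\rho\|d^k\|^2$ for every $v\in\partial g(x^k)$. Taking the supremum over $v$ yields $\phi'(x^k;d^k)\le-\rho\|d^k\|^2$. If $\partial g(x^k)$ happens to be empty, the supremum is $-\infty$ and the inequality holds trivially. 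Note that this step uses strong convexity of $g$ only; the strong convexity of $h$ is not needed.

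Part 2 follows from Part 1 and the definition of the directional derivative. If $d^k=0$, \eqref{eq5} holds trivially with equality for every $\lambda_k$. Otherwise, fix $0\le\alpha<\rho$. Then
\[
\lim_{t\downarrow0}\frac{\phi(x^k+td^k)-\phi(x^k)}{t}=\phi'(x^k;d^k)\le-\rho\|d^k\|^2<-\alpha\|d^k\|^2 .
\]
By the definition of the limit, there is $\sigma_k>0$ such that for all $t\in\,]0,\sigma_k]$ the difference quotient is at most $-\alpha\|d^k\|^2$. This is exactly \eqref{eq5} with $\lambda_k=t$.

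The main obstacle is making sure the limit defining $g'(x^k;d^k)$ exists, possibly equal to $-\infty$, so that the argument goes through. For convex $g$ the difference quotient is monotone in $t$, so the limit exists in $[-\infty,\infty)$ once $x^k\in\operatorname{dom}g$. If $\phi'(x^k;d^k)=-\infty$, the conclusion of Part 2 is immediate.
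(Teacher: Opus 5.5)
Your proposal is correct and follows the paper's proof essentially step for step. You split $\phi'(x^k;d^k)$ using \cref{hypo2}, bound $\langle v-\nabla h(x^k),d^k\rangle$ for every $v\in\partial g(x^k)$ via the strong monotonicity of $\partial g$ combined with \eqref{equ7}, and obtain Part 2 directly from the definition of the limit. Your extra bookkeeping only makes explicit what the paper leaves implicit: that $x^k\in\operatorname{dom}g$, that the one-sided limit exists, and that $\sigma_k$ can be taken strictly positive rather than the paper's $\sigma_k\ge 0$. The empty-subdifferential case you guard against cannot actually occur under \cref{hypo2}, since $g'(x^k;0)=0$.
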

\begin{proof}
By the definition of the one-sided directional derivative and \cref{hypo2}, we have:
\begin{equation}
\begin{aligned}
\phi^{\prime}\left(x^k ; d^k\right) & =\lim _{t \downarrow 0} \frac{\phi\left(x^k+t d^k\right)-\phi\left(x^k\right)}{t} \\
& =\lim _{t \downarrow 0} \frac{g\left(x^k+t d^k\right)-g\left(x^k\right)}{t}-\lim _{t \downarrow 0} \frac{h\left(x^k+t d^k\right)-h\left(x^k\right)}{t} \\
& =\sup \left\{\left\langle v, d^k\right\rangle, v \in \partial g\left(x^k\right)\right\}-\left\langle\nabla h\left(x^k\right), d^k\right\rangle \\
& =\sup \left\{\left\langle v-\nabla h\left(x^k\right), d^k\right\rangle, v \in \partial g\left(x^k\right)\right\} .
\label{descentproof}
\end{aligned}
\end{equation}
According to the \cref{subdiff_monotone} and \cref{equ7}, 
\begin{equation}
\left\langle\nabla h\left(x^k\right)-v, d^k\right\rangle \geq \rho\left\|d^k\right\|^2, \forall v \in \partial g\left(x^k\right)
\label{proofeq8}
\end{equation}
Then combining \cref{descentproof} with \cref{proofeq8} gives the first assertion. 

For the second assertion, if $d^{k} = 0$, then the proof is complete. Otherwise, according to \cref{descentproof} and the first assertion, we have:
\begin{equation*}
\lim _{t \downarrow 0} \frac{\phi\left(x^k+t d^k\right)-\phi\left(x^k\right)}{t} \leq-\rho\left\|d^k\right\|^2<-\alpha\left\|d^k\right\|^2.
\end{equation*}
Hence, there exists some $\sigma_{k} \geq 0$, such that:
\begin{equation*}
\frac{\phi\left(x^k+\lambda d^k\right)-\phi\left(x^k\right)}{\lambda} \leq-\alpha\left\|d^k\right\|^2, \forall \lambda \in \left] 0, \sigma_k \right].
\end{equation*}
\end{proof}

Our IBDCA can be viewed as a generalization of the classical BDCA, as shown in the following result.

\begin{proposition}
For any $k \in \mathcal{N}$, the following properties hold:\\
\begin{enumerate}
\item Take $\alpha_{k} \leq \rho$, if $\phi'(y^{k};d^{k}) \leq -\alpha_{k}\left\|d^k\right\|^2$, then there exists $\tau_{k} \geq 0$, such that for any  $\lambda_k \in \left]1,1+\tau_k\right]$, \cref{eq5} and \cref{proofcritical1} are satisfied. 

\item  Take $\alpha_{k} \leq \rho$, if $\phi$ is smooth at point $y^{k}$, then there exists $\tau_{k} \geq 0$, such that for any  $\lambda_k \in \left]1,1+\tau_k\right]$, \cref{eq5} and \cref{proofcritical1} are satisfied. 

% \item if $\sup \left\{\|u-v\|, u, v \in \partial g\left(x^k\right)\right\} \leq \epsilon_k$, and $\rho-\frac{\epsilon_k}{\left\|d^k\right\|}>0$. If we take $\alpha_k<\rho-\frac{\epsilon_k}{\left\|d^k\right\|}$, then there exists $\tau_k>0$, such that for any $\lambda_k \in\left] 1,1+\tau_k\right]$, \cref{eq5} and \cref{proofcritical1} are satisified. 
\end{enumerate}
\label{prop3.3}
\end{proposition}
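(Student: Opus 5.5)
The plan is to exploit the fact that $x^k+\lambda d^k = y^k + (\lambda-1)d^k$. For $\lambda_k = 1+t$ with $t\ge 0$ small, both \cref{eq5} and \cref{proofcritical1} become statements about $\phi$ near $y^k$ along the direction $d^k$. At $t=0$ we have $\phi(x^k+\lambda_k d^k)=\phi(y^k)$. \Cref{prop3.1} gives the slack
\[
\phi(y^k)\le \phi(x^k)-\rho\|d^k\|^2 .
\]
So the goal is to show that moving a little further along $d^k$ from $y^k$ does not increase $\phi$. It should also not consume more than the remaining slack $(\rho-\alpha_k\lambda_k)\|d^k\|^2$ in \cref{eq5}. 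We will need to read $\alpha$ in \cref{eq5} as $\alpha_k$.

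\textbf{Assertion 1.} Assume first $d^k\neq 0$; otherwise the algorithm has stopped and the claim is trivial with $\tau_k=0$. Since $\phi'(y^k;d^k)\le -\alpha_k\|d^k\|^2<0$, by the definition of the directional derivative there is $\tau_k>0$ with
\[
\phi(y^k+td^k)\le \phi(y^k)-\tfrac{\alpha_k}{2}\,t\|d^k\|^2\le\phi(y^k)\qquad \text{for all } t\in\left]0,\tau_k\right].
\]
This gives \cref{proofcritical1} immediately. For \cref{eq5}, combine with \cref{prop3.1}:
\[
\phi(x^k+\lambda_k d^k)\le \phi(x^k)-\rho\|d^k\|^2-\tfrac{\alpha_k}{2}t\|d^k\|^2 .
\]
We need this to be at most $\phi(x^k)-\alpha_k(1+t)\|d^k\|^2$, i.e.
\[
\rho+\tfrac{\alpha_k}{2}t\ge \alpha_k+\alpha_k t,
\qquad\text{equivalently}\qquad
\rho-\alpha_k\ge \tfrac{\alpha_k}{2}t .
\]

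\textbf{Main obstacle: the boundary case $\alpha_k=\rho$.} Here the slack from \cref{prop3.1} is exactly used up at $t=0$, so the argument above fails. When $\alpha_k<\rho$, the fix is simply to shrink $\tau_k$ to at most $2(\rho-\alpha_k)/\alpha_k$. When $\alpha_k=\rho$, I would try to use the hypothesis sharply rather than with the factor $1/2$: choose $\tau_k$ so that $\phi(y^k+td^k)\le\phi(y^k)-(\alpha_k-\varepsilon)t\|d^k\|^2$. That still leaves an $\varepsilon t$ deficit. So the alternative is to sharpen \cref{prop3.1}. Its proof discards strong-convexity terms, and in fact gives
\[
\phi(y^k)\le\phi(x^k)-\rho\|d^k\|^2
\]
with equality only in degenerate cases; extra slack can come from using $\rho/2$ for each of $g$ and $h$ together with the monotonicity \cref{proofeq8}. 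If this cannot be closed, I would state the result for $\alpha_k<\rho$, allow $\tau_k=0$ in the degenerate case (the statement permits $\tau_k\ge 0$), and note that $\tau_k=0$ makes the claim vacuous.

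\textbf{Assertion 2.} If $\phi$ is smooth at $y^k$, then $g=\phi+h$ is differentiable at $y^k$, so $\partial g(y^k)=\{\nabla g(y^k)\}$. Optimality \cref{equ7} then gives $\nabla g(y^k)=\nabla h(x^k)$, and hence
\[
\phi'(y^k;d^k)=\langle \nabla h(x^k)-\nabla h(y^k),\,d^k\rangle\le -\rho\|d^k\|^2 ,
\]
using the $\rho$-strong monotonicity of $\nabla h$ from \cref{subdiff_monotone}. Since $\alpha_k\le\rho$, this gives $\phi'(y^k;d^k)\le-\alpha_k\|d^k\|^2$, so assertion 2 reduces to assertion 1.
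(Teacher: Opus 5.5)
For $\alpha_k<\rho$ your proof is correct, and your assertion 2 is the paper's argument. Your assertion 1, though, is organized differently from the paper's. The paper takes descent from $y^k$ at the full rate, $\phi(y^k+td^k)\le\phi(y^k)-\alpha_k t\|d^k\|^2$ for small $t>0$. It then chains this with \cref{prop3.1}:
\[
\phi(x^k+(1+t)d^k)\le\phi(x^k)-\rho\|d^k\|^2-\alpha_k t\|d^k\|^2\le\phi(x^k)-\alpha_k(1+t)\|d^k\|^2 .
\]
This uses only $\alpha_k\le\rho$, puts no cap on $t$ coming from \eqref{eq5}, and covers $\alpha_k=\rho$. \eqref{proofcritical1} follows directly from the descent bound. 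So your ``main obstacle'' is created by your factor $\tfrac12$.

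The paper's route has a price: the full rate needs $\phi'(y^k;d^k)<-\alpha_k\|d^k\|^2$ \emph{strictly}. The paper just writes the limit with ``$<$'', and in assertion 2 it switches to $\alpha_k<\rho$ to get that strictness, although the statement only has ``$\le$''. Your half-rate argument with the cap $\tau_k\le 2(\rho-\alpha_k)/\alpha_k$ is what actually works under the hypothesis as stated, at the cost of excluding $\alpha_k=\rho$.

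You should drop the proposed rescue by sharpening \cref{prop3.1}. Its proof discards nothing: it uses exactly $\rho/2$ from $g$ and $\rho/2$ from $h$, and the bound is attained in non-degenerate situations. In fact, under the non-strict hypothesis the case $\alpha_k=\rho$ is false for every $\tau_k>0$. Here is a counterexample.
\begin{enumerate}
\item Take $m=1$, $\rho=1$, $h(z)=\tfrac12 z^2$, and $g(z)=\tfrac12 z^2+q(z)$ with $q(z)=z$ for $z\ge 0$ and $q(z)=z+z^2$ for $z<0$. Then $\phi=q$ is convex, $C^1$ and bounded below, and all three assumptions hold.
\item From $x^k=1$ the subproblem gives $y^k=0$ and $d^k=-1$.
\item Then $\phi(y^k)=0=\phi(x^k)-\rho\|d^k\|^2$ and $\phi'(y^k;d^k)=-1=-\rho\|d^k\|^2$.
\item For $\lambda_k=1+t$, the test \eqref{eq5} with $\alpha_k=\rho=1$ reads $-t+t^2\le -t$, which fails for all $t>0$.
\end{enumerate}
Since $\phi$ is continuously differentiable at $y^k$, this example also defeats assertion 2 at $\alpha_k=\rho$.

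So your fallback is the right resolution. Prove the result for $\alpha_k<\rho$, or add the strict inequality as a hypothesis and use the full-rate chain above. The vacuous choice $\tau_k=0$ should not be offered as a proof of anything.
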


% \begin{remark}
% (1) \cref{prop3.3} implies that our modified BDCA can be viewed as a generalization of the BDCA;\\
% (2) Regarding the condition in assertion (3) of \cref{prop3.3}, in practical applications, the function $g$ can often be written as $g(x) = r(x) + \bar{g}(x)$, where r(x) is a possibly non-smooth regularization term of the form of $\mu \|A(\cdot)\|$, where $\mu > 0$ denotes the weight, and A is a bounded linear operator. In real applications, $\mu$ is usually quite small, correspondingly, $\sigma_{k}$ is also relatively small. Moreover, in the initial stage of the iteration, $\|d^{k}\|$ is also relatively large, so the condition in (3) of \cref{prop3.3} can potentially be satisfied.
% \end{remark}
\begin{proof} 
 According to the definition of the directional derivative, we have:
$$
\lim _{t \downarrow 0} \frac{\phi\left(y^k+t d^k\right)-\phi\left(y^k\right)}{t}<-\alpha_k\left\|d^k\right\|^2,
$$
Thus, there exists $\tau_{k} \geq 0$ such that for any $\lambda \in\left] 1,1+\tau_k\right]$, we have:
$$
\phi\left(y^k+(\lambda-1) d^k\right)-\phi\left(y^k\right) \leq-\alpha_k(\lambda-1)\left\|d^k\right\|^2,
$$
further, we have:
$$
\phi\left(x^k+\lambda d^k\right) \leq \phi\left(y^k\right)-\alpha_k(\lambda-1)\left\|d^k\right\|^2 \leq \phi\left(x^k\right)-\lambda \alpha_k\left\|d^k\right\|^2 .
$$
The last inequality comes from \cref{def:algo4} and $\alpha_{k} \leq \rho$.

According to the \cref{equ7} in \cref{prop3.1} and $\alpha_{k} < \rho$, we have:
$$
\phi^{\prime}\left(y^k ; d^k\right)=\left\langle\nabla \phi\left(y^k\right), d^k\right\rangle \leq-\rho\left\|d^k\right\|^2<-\alpha_k\left\|d^k\right\|^2,
$$
The remaining part can be proved using the same method as in the first assertion.
\end{proof}
% According to \cref{hypo2}, we have:
% \begin{equation}
% \begin{aligned}
% \phi^{\prime}\left(y^k ; d^k\right) & =\sup \left\{\left\langle v, d^k\right\rangle, v \in \partial g\left(y^k\right)\right\}-\left\langle\nabla h\left(y^k\right), d^k\right\rangle \\
% & =\sup \left\{\left\langle v-\nabla h\left(x^k\right), d^k\right\rangle, v \in \partial g\left(y^k\right)\right\}\\
% &-\left\langle\nabla h\left(x^k\right)-\nabla h\left(y^k\right), d^k\right\rangle \\
% & \leq \sup \left\{\left\|v-\nabla h\left(x^k\right)\right\|\left\|d^k\right\|, v \in \partial g\left(y^k\right)\right\}-\rho\left\|d^k\right\|^2 \\
% & \leq \epsilon_k\left\|d^k\right\|-\rho\left\|d^k\right\|^2 \\
% & <-\alpha_k\left\|d^k\right\|^2
% \end{aligned}
% \end{equation}
% The first inequality is obtained by applying the Cauchy inequality and \cref{th2.5}, and the second inequality employs \cref{equ7}. The remaining part can be proved using the same method as assertion (1).

Next, we present the convergence results of the sequence generated by IBDCA.
\begin{theorem}
For any initial point $x^{0} \in \mathbb{R}^{m}$, IBDCA either returns a critical point or generates an infinite sequence that satisfies the following properties:
\begin{enumerate}
\item The sequence $\left\{\phi(x^{k})\right\}$ is monotonically decreasing and converges to $\phi^{*}$;

\item Any limit point of ${x^{k}}$ is a critical point of the DC problem \cref{DCproblem}. Furthermore, if $\phi$ is coercive, i.e., $\phi(x) \rightarrow+\infty$ as $\|x\| \rightarrow \infty$, then there exists a subsequence of $x^{k}$ that converges to a critical point of the DC problem \cref{DCproblem};

\item $\sum_{k=0}^{\infty}\left\|d^k\right\|^2<\infty$, and consequently, $ \sum_{k=0}^{\infty}\left\|x^{k+1}-x^k\right\|^2<\infty
$.

\end{enumerate}
\label{prop3.7}
\end{theorem}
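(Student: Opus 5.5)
The plan is the standard sufficient-decrease argument for DCA-type methods. The key is a per-iteration decrease inequality of the form $\phi(x^{k+1})\le \phi(x^k)-\rho\|d^k\|^2$. This holds in both branches of step 4 of \cref{alg:modified_BDCA}:
\begin{itemize}
\item If the accepted $\lambda_k>1$, then \cref{proofcritical1} gives $\phi(x^{k+1})\le\phi(y^k)$.
\item If $\lambda_k$ is reset to $1$, then $x^{k+1}=y^k$ and so $\phi(x^{k+1})=\phi(y^k)$.
\end{itemize}
In either case, \cref{prop3.1} (assertion 2) yields
\begin{equation*}
\phi(x^{k+1})\le \phi(y^k)\le \phi(x^k)-\rho\|d^k\|^2 .
\end{equation*}
I would first note that step 4 is well defined. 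The backtracking terminates because $\beta^j\lambda_k\to0$, so eventually $\lambda_k\le1$ and the reset applies. Alternatively, \cref{prop3.2} guarantees that \cref{eq5} holds for small steps. If the algorithm stops finitely, it stops with $d^k=0$ (or $x^{k+1}=x^k$, which forces $\lambda_kd^k=0$, hence $d^k=0$). Then \cref{prop3.1} (assertion 1) says $x^k$ is critical.

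For item 1, the displayed inequality shows $\{\phi(x^k)\}$ is nonincreasing. By \cref{hypo3} it is bounded below, so it converges to some $\phi^*$. For item 3, I would sum the inequality over $k=0,\dots,N$ to get
\begin{equation*}
\rho\sum_{k=0}^N\|d^k\|^2\le\phi(x^0)-\phi^*<\infty .
\end{equation*}
Then $\|x^{k+1}-x^k\|=\lambda_k\|d^k\|$. Since the trial step is at most some $\overline\lambda$ (I will use that the initial trial $\lambda_k$ is bounded by the fixed $\overline\lambda$ from step 1), we get $\sum\|x^{k+1}-x^k\|^2\le\overline\lambda^2\sum\|d^k\|^2<\infty$. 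In particular $d^k\to0$.

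For item 2, let $x^{k_j}\to\bar x$. Since $d^{k_j}\to0$, also $y^{k_j}=x^{k_j}+d^{k_j}\to\bar x$. By \cref{equ7}, $\nabla h(x^{k_j})\in\partial g(y^{k_j})$, and continuity of $\nabla h$ (\cref{hypo3}) gives $\nabla h(x^{k_j})\to\nabla h(\bar x)$. To pass to the limit I would use closedness of the graph of $\partial g$. This requires $g(y^{k_j})\to g(\bar x)$, or at least that $\bar x\in\operatorname{dom} g$ and lower semicontinuity. This is the main obstacle, since $g$ is only proper convex. I would first handle it via the subgradient inequality $g(z)\ge g(y^{k_j})+\langle\nabla h(x^{k_j}),z-y^{k_j}\rangle$ for all $z$. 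Then $h(y^{k_j})$ converges by continuity of $h$, and $g(y^{k_j})=\phi(y^{k_j})+h(y^{k_j})$ is bounded above by $\phi(x^0)+h(y^{k_j})$. Taking $\liminf$ and using lower semicontinuity of $g$ (a closedness assumption implicit in the setting) gives $g(z)\ge \limsup g(y^{k_j})+\langle\nabla h(\bar x),z-\bar x\rangle\ge g(\bar x)+\langle\nabla h(\bar x),z-\bar x\rangle$. Here one takes $\limsup$ along the sequence, and $\bar x\in\operatorname{dom}g$ follows from the bound above. Hence $\nabla h(\bar x)\in\partial g(\bar x)$, so $\bar x$ is critical.

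Finally, if $\phi$ is coercive, the monotonicity from item 1 keeps $\{x^k\}$ in the level set $\{\phi\le\phi(x^0)\}$, which is bounded. Bolzano–Weierstrass then gives a convergent subsequence, whose limit is critical by the previous step.
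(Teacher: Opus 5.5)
Your proposal is correct and follows essentially the same route as the paper. Both use the descent chain $\phi(x^{k+1})\le\phi(y^k)\le\phi(x^k)-\rho\|d^k\|^2$, which comes from the acceptance test $\phi(x^k+\lambda_k d^k)\le\phi(y^k)$ (or from the reset $\lambda_k=1$), then telescope for items 1 and 3 and pass to the limit through the closed graph of $\partial g$ for item 2. Your version is slightly more careful than the paper's: you use the correct inclusion $\nabla h(x^{k_j})\in\partial g(y^{k_j})$ (the paper writes it reversed), you make explicit the lower semicontinuity of $g$ that the closed-graph step needs, and you invoke the bound $\lambda_k\le\overline{\lambda}$ that the paper silently relies on for $\sum_k\|x^{k+1}-x^k\|^2<\infty$.
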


\begin{proof} If the algorithm returns $x^{k}$, according to \cref{prop3.1}, ${x^{k}}$ is a critical point. Otherwise, we have:
\begin{equation}
\phi(x^{k+1}) \leq \phi(y^k) \leq  \phi(x^k) - \rho\|d^k\|^2.
\label{equ10}
\end{equation}
Since the sequence is monotonically decreasing and has a lower bound according to \cref{hypo3}, it will converge to ${\phi^{*}}$.

Therefore, we have
\begin{equation*}
\phi(x^k)-\phi(x^{k+1}) \rightarrow 0,
\end{equation*}
which yields $\|d^{k}\| \rightarrow 0$ from \cref{equ10}.

Let $x^{\ast}$ be any critical point of ${{x}^{k}}$, and suppose $\left\{{x}^{k_{n}}\right\}$ is a subsequence that converges to ${x}^{*}$. Since $\|{y}^{k_n} - {x}^{k_n}\|^2 = \|{d}^{k_n}\|^2 \to 0$, we have ${y}^{k_n} \to {x}^{*}$. By continuity, we obtain: 
$$
\nabla h({y}^{k_n}) \to \nabla h({x}^{*}).
$$

Moreover, $\nabla h({y}^{k_n}) \in \partial g({x}^{k_n})$, and the graph of $\partial g$ is closed. Therefore, we have 
$$
\nabla h(x^*) \in \partial g({x}^*).
$$

Especially, when $\phi$ is coercive, the first assertion implies that the sequence $\{{x}^k\}$ is bounded. Hence, there exists some subsequence that converges to a critical point of the DC problem \eqref{DCproblem}.

For the last assertion, since we have:
$$
\rho\left\|d^k\right\|^2 \leq \phi\left(x^k\right)-\phi\left(x^{k+1}\right).
$$
Then, summing this inequality from $k=0$ to $N$, we obtain the following:
$$
\sum_{k=0}^N \rho\left\|d^k\right\|^2 \leq \phi\left(x^0\right)-\phi\left(x^{N+1}\right) \leq \phi\left(x^0\right)-\inf _{x \in \mathbb{R}^m} \phi(x),
$$
which completes the proof by letting $N \to \infty$.
\end{proof}

Finally, we present two examples to show the strength of IBDCA. The first example is given in \cite[Example 3.2]{nonsmoothBDCA}, which reveals that when the function $g$ in the DC decomposition is non-smooth, even if $y^{k}$ is not a global minimizer, $d^{k}$ may still be an ascending direction of $y^{k}$. In this example, the BDCA fails to complete the task, nmBDCA is inefficient since it increases the function value in the ascending direction, while the improved BDCA can solve this problem effectively by monotone line search.

\begin{exmp}
We consider the following DC problem:
$$
\min_{x=(u, v) \in \mathbb{R}^2} g(u, v)-h(u, v):=\phi(x),
$$
where 
$$
g(u, v)=-\frac{5}{2} u+u^2+v^2+|u|+|v|, \quad h(u, v)=\frac{1}{2}\left(u^2+v^2\right) \text {, }
$$
Set $x^{0} = (u^{0}, v^{0})=(\frac{1}{2}, 1).$ Then, the point generated by DCA is $y^{0} = (1,0)$. As stated in \cite[Example 3.2]{nonsmoothBDCA}, since $d^{0}=y^{0} - x^{0} = (\frac{1}{2}, -1)$ is not a descending direction at point $y^{0}$, the line search of BDCA cannot be performed.

However, our improved BDCA abandons the additional search at $y^{0}$. We let $x^{1}=y^{0}$, and thus obtain $y^{1} = (\frac{5}{4}, 0)$ using DCA. Subsequently, $x^{2}=(\frac{3}{2},0)$ is a point that satisfies the search criteria of IBDCA. In fact, with $d^{1} = y^{1} - x^{1} = (\frac{1}{4},0)$, we then obtain the following with simple calculation:
$$
\phi\left(x^2\right)=\phi\left(x^1+2 d^1\right)<\phi\left(x^1\right)-2 \alpha\left\|d^1\right\|^2, \quad \forall \alpha \in \left] 0,2 \right[.
$$
Moreover, $x^{1}$ is not a minimizer of $\phi$, the BDCA thus fails to solve the model. The points generated by DCA converge to the global minimum, while the convergence speed of the improved BDCA is significantly accelerated, as shown in \cref{Fig_Example1}.
\label{eg3.5}
\end{exmp}

\begin{figure*}[!ht]
\centering
\begin{minipage}[t]{0.39\linewidth}
\centering
\includegraphics[width=0.99\linewidth]{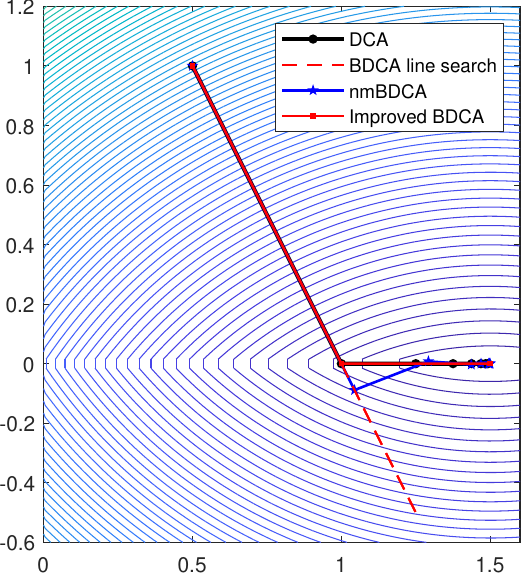}
\end{minipage}%
\vspace{0.1cm}
\begin{minipage}[t]{0.59\linewidth}
\centering
\includegraphics[width=0.99\linewidth]{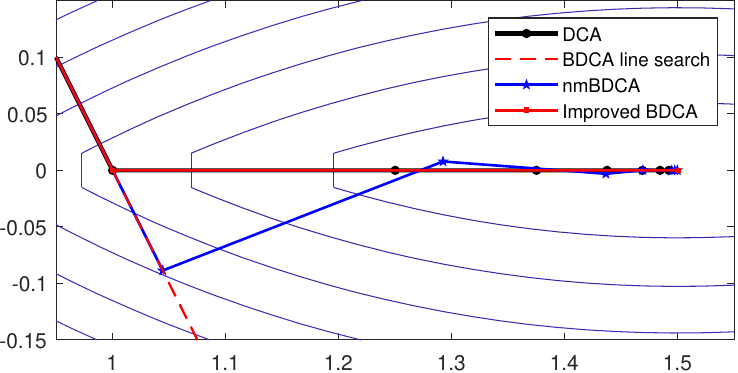}
\end{minipage}
\caption{The iterative behavior of DCA, nmBDCA, BDCA and IBDCA in \cref{eg3.5}.}
\centering
\label{Fig_Example1}
\end{figure*}

\cite[Example 3.1]{nonsmoothBDCA} points out that the BDCA possesses a greater possibility of escaping from non-global solutions. The following example, adapted from the SCAD (smoothly clipped absolute deviation, \cite{fan2001variable}) function, shows that our improved BDCA also has this advantage.

\begin{exmp}
We consider the following DC problem:
$$
\min_{x=(u, v) \in \mathbb{R}^2} \phi(x):=\tilde{\phi}(u)+\tilde{\phi}(v),
$$
where

\begin{equation*}
\begin{split}
\tilde{\phi}(u)&=\tilde{g}(u)-\tilde{h}(u),\\
\tilde{g}(u)&=\left\{\begin{array}{ll}
|x|+\frac{x^2}{5} & , \text { if }|x|<2 \\
|x|+(|x|-2)^2+\frac{x^2}{5} & , \text { if }|x| \geq 2
\end{array},\right. \\
\tilde{h}(u)&= \begin{cases}\frac{x^2}{5} & ,|x| \leq 1 \\
\frac{x^2-2|x|+1}{2}+\frac{x^2}{5} & , 1<|x|<2 . \\
\operatorname{sign}(x)+\frac{x^2}{5} & ,|x|>2\end{cases}
\end{split}
\end{equation*}
This DC problem satisfies \cref{hypo1,hypo2,hypo3}. 

\cref{Fig_Example2} shows the iterative behavior of DCA, nmBDCA, and improved BDCA with the initial point $x^{0} = (2.2, 0.4)$. In the improved BDCA, we take $\alpha_{k}=0.2$, $\overline{\lambda_{k}}=3$, $\beta_{k}=0.7$, and for nmBDCA, we take $\alpha_{k}=0.2$, 
 $\beta_{k}=0.7$, $\overline{\lambda_{k}}=3-1$, $\nu_{k}=\left\|d^k\right\|^2 /(k+1)$. Our improved BDCA not only has a faster convergence speed than DCA and nmBDCA, but the line search process also prevents the generated sequence from converging to the critical local solution $(2, 0)$, which is not even a local minimum point.

We randomly select one million initial points in the range $[0,3]\times[0,3]$. Numerical experiments show that the sequence generated by DCA is very likely to converge to the critical points of non-global minimum values, and nmBDCA still holds a portion of points that converge to the critical points of non-global minimum values. In contrast, the improved BDCA converges to the global minimum point for all initial points selected, and the convergence speed is faster than nmBDCA. 
$$
\begin{array}{|c|c|c|c|c|c|}
\hline & (2,2) & (0,2) & (2,0) & (0,0) & \text { Time } \\
\hline \text { DCA } & 110,831 & 222,087 & 223,147 & 443,935 & 13.10 \mathrm{~s} \\
\hline \text { nmBDCA } & 0 & 9645 & 9563 & 980792 & 276.7 \mathrm{~s} \\
\hline \text { improved BDCA } & 0 & 0 & 0 & 1,000,000 & 34.59 \mathrm{~s} \\
\hline
\end{array}
$$

\label{eg3.6}
\end{exmp}

\begin{figure*}[!ht]
\centering
\begin{minipage}[t]{0.59\linewidth}
\centering
\includegraphics[width=0.99\linewidth]{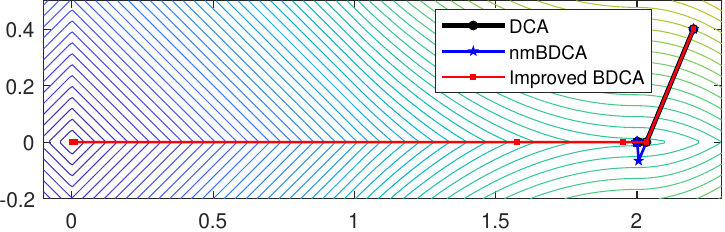}
\end{minipage}%
\vspace{0.1cm}
\begin{minipage}[t]{0.39\linewidth}
\centering
\includegraphics[width=0.99\linewidth]{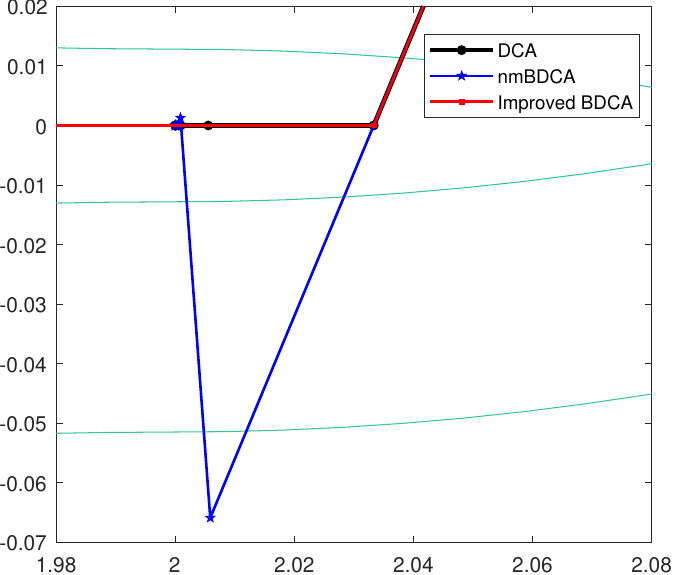}
\end{minipage}
\caption{The iterative behavior of DCA, nmBDCA, and improved BDCA with the initial point $x^{0} = (2.2, 0.4)$ in \cref{eg3.6}.}
\centering
\label{Fig_Example2}
\end{figure*}

\section{Global Convergence Properties Based on the Kurdyka–Łojasiewicz Inequality}
\label{sec:KL}
In this section, we establish the global convergence and convergence rate of the sequence generated by the improved BDCA based on the Kurdyka–
Łojasiewicz (K-Ł) inequality. First, we review the relevant concepts of the K-Ł inequality, which has been widely applied in DC (Difference of Convex) problems non-smooth optimization \cite{aragon2018accelerating,nonsmoothBDCA,le2009convergence, chouzenoux2016block,CHAI2022neurodynamic}.
\begin{definition}
Let $f: \mathbb{R}^m \to \mathbb{R}$ be a locally Lipschitz continuous function. We say that $f$ satisfies the K-Ł inequality at $x^* \in \mathbb{R}^m$ if there exist $\eta \in ]0, +\infty[$, a neighborhood $U$ of $x^*$, and a concave function $\psi: [0, \eta] \to [0, +\infty[$ such that:
\begin{enumerate}
\item $\psi(0) = 0$;
\item $\psi$ is of class $C^1$ on the interval $[0, \eta]$;
\item $\psi'[0, \eta] > 0$;
\item For any $x \in U$ satisfying $f(x^*) \leq f(x) \leq f(x^*) + \eta$, there exists:
\begin{equation}
\psi^{\prime}\left(f(x)-f\left(x^*\right)\right) \operatorname{dist}\left(0, \partial_C f(x)\right) \geq 1
\label{inequ11}
\end{equation}
\end{enumerate}
\end{definition}

Then, we provide the definitions of subanalytic and semialgebraic functions (referencing \cite{attouch2009convergence,attouch2010proximal,attouch2013convergence}), encompassing a broad class of functions commonly encountered in optimization problems. Under certain conditions, these functions satisfy the K-Ł inequality.

\begin{definition}
We say $A \subset \mathbb{R}^m$ is subanalytic if for any point $a \in A$, there exists a neighborhood $V$ of $a$, and some $d \geq 1$, such that $A \cap V = \{x \in \mathbb{R}^m: \exists y \in \mathbb{R}^d, (x, y) \in B\}$, where $B$ is bounded and satisfies:
$$
B=\bigcup_{i=1}^p \bigcap_{j=1}^q\left\{(x, y) \in \mathbb{R}^m \times \mathbb{R}^d: f_{i j}(x, y)=0, g_{i j}(x, y)<0\right\},
$$
for some real analytic functions $f_{ij}, g_{ij}: \mathbb{R}^m \times \mathbb{R}^d \to \mathbb{R} (1 \leq i \leq p, 1 \leq j \leq q)$. 

A function $f: \mathbb{R}^m \to \mathbb{R} \cup {+\infty}$ is said to be subanalytic, if its graph $\operatorname{gph}f = \{(x, t): x \in \mathbb{R}^m, f(x) = t\}$ is a subanalytic set of $\mathbb{R}^m \times \mathbb{R}$.
\end{definition}

\begin{definition}
We say a set $A \subset \mathbb{R}^m$ is semialgebraic, if A can be formed as
$$
A=\bigcup_{i=1}^p \bigcap_{j=1}^q\left\{x \in V: f_{i j}(x)=0, g_{i j}(x)<0\right\},
$$
where $f_{ij}, g_{ij}: \mathbb{R}^m \to \mathbb{R} (1 \leq i \leq p, 1 \leq j \leq q)$ are real polynomial functions.
\end{definition}

Below are some examples of common subanalytic functions in optimization problems (referencing \cite{attouch2009convergence,attouch2010proximal,attouch2013convergence}).

\begin{proposition}
The following conclusions hold:
\begin{enumerate}
\item The following functions are subanalytic:\\
(a) Real-analytic function; (b) Semialgebraic function; (c) The sum of a real-analytic function and a subanalytic function.
\item The following functions are semialgebraic:\\  
(a) The finite sum and product of semialgebraic functions;
(b) The composition of semi-algebraic functions;
(c) $l_p$ norm, i.e., $\|x\|_p := (\sum_i |x^i|^p)^{1/p}$, $p\in\mathbb{N}^+$.
    \end{enumerate}   
    \label{exmp4.4}
\end{proposition}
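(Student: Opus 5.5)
This proposition is a collection of standard facts from o-minimal and semialgebraic geometry, so I will argue by reduction to the definitions and to the stability results in \cite{attouch2009convergence,attouch2010proximal,attouch2013convergence} rather than from scratch. The common device is the graph: a function is subanalytic (resp. semialgebraic) exactly when its graph is a subanalytic (resp. semialgebraic) set. So each item reduces to showing that some set built from the graphs of the given functions lies in the right class.

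\emph{Part 1.} For (a), let $f$ be real-analytic and $(a,f(a))$ a point of $\operatorname{gph} f$. I take a bounded neighbourhood $V$ of it, and inside the bounded box write $\operatorname{gph}f\cap V$ as $\{(x,t): t-f(x)=0\}$ intersected with finitely many strict inequalities $g_{ij}<0$ that cut out the box. These are analytic, so no auxiliary variable $y$ is needed. For (b), every polynomial is analytic, so the defining representation of a semialgebraic set is already of the subanalytic form locally; I only intersect with a bounded box, which is again defined by polynomial strict inequalities. For (c), with $f$ analytic and $s$ subanalytic, I write
\[
\operatorname{gph}(f+s)=\{(x,t):\exists\, u,\ (x,u)\in\operatorname{gph}s,\ t=f(x)+u\}.
\]
This is the projection of the intersection of $\operatorname{gph}s\times\mathbb{R}$ with an analytic set. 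Subanalytic sets are closed under finite intersection and under projection of relatively compact pieces, which is Gabrielov's theorem as cited in the references. The projection here is locally bounded because $u=t-f(x)$ is controlled near a given point.

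\emph{Part 2.} The key tool is the Tarski--Seidenberg theorem: the projection of a semialgebraic set is semialgebraic, and hence any set defined by a first-order formula with polynomial atoms is semialgebraic. Under this tool each item reduces to exhibiting the graph as such a projection:
\[
\operatorname{gph}(f+g)=\{(x,t):\exists\, u,v,\ (x,u)\in\operatorname{gph}f,\ (x,v)\in\operatorname{gph}g,\ t=u+v\},
\]
and similarly with $t=uv$ for products, with finite sums and products following by induction. For compositions, $\operatorname{gph}(f\circ g)=\{(x,t):\exists\, y,\ (x,y)\in\operatorname{gph}g,\ (y,t)\in\operatorname{gph}f\}$. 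For (c), $t=\|x\|_p$ is equivalent to $t\ge 0$ and $t^p=\sum_i s_i$ with $s_i^2=(x^i)^{2p}$ and $s_i\ge0$, since $s_i=|x^i|^p$. This is a polynomial system with existential quantifiers, so its graph is semialgebraic.

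The main obstacle is Part 1(c), together with the closure of subanalytic sets under projection underlying it. Projections of subanalytic sets are subanalytic only under boundedness, and the paper's definition is only local. I would handle this by working in a bounded neighbourhood of each graph point and invoking Gabrielov's complement theorem from the cited literature rather than reproving it. In the semialgebraic parts the only nontrivial input is Tarski--Seidenberg, which I would cite.
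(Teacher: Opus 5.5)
Your proposal is correct. The paper gives no proof of this proposition; it only lists the facts with a pointer to the works of Attouch, Bolte et al. So your argument is a self-contained substitute rather than a variant of a proof in the paper.

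What your route buys is a check of each claim against the definitions the paper actually writes down:
\begin{itemize}
\item Everything is reduced to graphs.
\item Tarski--Seidenberg handles all of Part~2: sums and products via an existential formula, compositions by eliminating the intermediate variable, and the $\ell_p$ norm via the system $t\ge 0$, $s_i\ge 0$, $s_i^2=(x^i)^{2p}$, $t^p=\sum_i s_i$.
\item Closure of subanalytic sets under intersection and proper projection handles Part~1(c).
\end{itemize}
This is the standard reasoning behind the cited facts, and it isolates Tarski--Seidenberg as the only deep input on the semialgebraic side.

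Three corrections.

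\emph{Gabrielov's theorem.} It is the \emph{complement} theorem, which you never use. Closure under finite intersection, and under projection of relatively compact (or properly projected) sets, follows directly from the definition and should be cited as such.

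\emph{Part~1(c) needs no projection result at all.} We have $\operatorname{gph}(f+s)=\Phi(\operatorname{gph}s)$ for the analytic diffeomorphism $\Phi(x,u)=(x,u+f(x))$. Suppose $\operatorname{gph}s\cap V$ is the projection of a bounded set $B$ cut out by analytic $f_{ij},g_{ij}$. Then $\operatorname{gph}(f+s)\cap\Phi(V)$ is the projection of $(\Phi\times\mathrm{id})(B)$. This set is bounded, since $\overline{B}$ is compact and $\Phi$ is continuous. It is cut out by the analytic functions $f_{ij}(x,t-f(x),y)$ and $g_{ij}(x,t-f(x),y)$. This verifies the paper's definition verbatim.

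\emph{Auxiliary variable in 1(a)--(b).} The paper's definition requires some $d\ge 1$. So add a dummy coordinate $y$ constrained by $y=0$, rather than asserting that no auxiliary variable is needed.
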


The following theorem plays a crucial role in the analysis of global convergence.

\begin{theorem} (\cite[Theorem 3.1]{bolte2007lojasiewicz}): Let $f: \mathbb{R}^m \to \mathbb{R} \cup \{+\infty\}$ be a subanalytic function, dom$f$ be closed, and $f|_{Domf}$ be continuous. Let $x^{\ast}$ be a critical point of $f$. Then $f$ satisfies the Kurdyka-Łojasiewicz inequality \cref{inequ11} at $x^{\ast}$. Besides, the concave function $\phi$ as in the Kurdyka-Łojasiewicz inequality can be chosen as $\phi(t) = Mt^{1-\theta}$ for some $M > 0$ and $\theta \in [0,1[$, where we use the convention $0^{0} =1$. $\theta$ is called the Łojasiewicz exponent. 
\label{thm4.5}
\end{theorem}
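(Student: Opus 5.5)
The plan is to follow the strategy of Bolte, Daniilidis and Lewis. First reduce to a one-variable inequality along the level structure of $f$ near the critical point. Then obtain the power-type desingularizing function from the Puiseux-type expansion available for subanalytic functions of one variable.

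Without loss of generality take $f(x^*)=0$. Because $\operatorname{dom} f$ is closed and $f|_{\operatorname{dom} f}$ is continuous, the restriction of $f$ to a compact neighbourhood $\overline{\mathbb{B}}(x^*,r)\cap \operatorname{dom} f$ is again subanalytic and continuous. Consider the marginal function
$$
\mu(t)=\inf\left\{\operatorname{dist}\left(0,\partial_C f(x)\right): x\in\overline{\mathbb{B}}(x^*,r),\ f(x)=t\right\},\qquad t\in\,]0,\eta].
$$
The first step is to show that $\mu$ is subanalytic. Two facts are needed. The graph of $\partial_C f$ is subanalytic, because it is built from limits of gradients on the differentiability set by operations (closure, convex hull via Carathéodory, projection of relatively compact sets) that preserve subanalyticity. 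In addition, infima over bounded subanalytic fibres stay subanalytic. The monotonicity lemma for one-variable subanalytic functions then gives the following: on some $]0,\eta]$, $\mu$ is either identically $0$ or continuous, positive and analytic with an expansion $\mu(t)=c\,t^{\gamma}(1+o(1))$, where $c>0$ and $\gamma\in\mathbb{Q}$.

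The key step, and the one I expect to be the main obstacle, is ruling out $\mu\equiv 0$ and showing $\gamma<1$. Both would follow from a \emph{nonsmooth Sard/talweg} argument. Suppose there were points $x$ with $f(x)=t>0$ arbitrarily small and $0\in\partial_C f(x)$, or with $\operatorname{dist}(0,\partial_C f(x))$ of order $\ge t$. By the curve selection lemma, pick a subanalytic curve $\theta:[0,\epsilon[\to\overline{\mathbb{B}}(x^*,r)$ with $\theta(0)=x^*$ that nearly realises $\mu$ along $t\mapsto f(\theta(s))$. Apply the Clarke chain rule to $f\circ\theta$, which is a one-dimensional subanalytic function and hence piecewise analytic. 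This gives $|(f\circ\theta)'(s)|\le \operatorname{dist}(0,\partial_C f(\theta(s)))\,\|\theta'(s)\|$. Integrating, and comparing with $f(\theta(s))\to 0$ together with the finite length of $\theta$, should yield $\mu(t)\ge c\,t^{\gamma}$ with $\gamma<1$. This also excludes a continuum of critical values accumulating at $0$.

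The subtle points are these: the chain rule for $\partial_C$ on subanalytic functions only holds almost everywhere along the curve, and $f$ need not be locally Lipschitz at the boundary of $\operatorname{dom} f$. I would first try to handle both by stratifying $\operatorname{dom} f$ into analytic strata on which $f$ is analytic. One then works with the Riemannian gradient on each stratum and uses the projection formula, which says that the projection of $\partial_C f$ onto the tangent space of the stratum is the stratum gradient, to compare the two sides.

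Finally, set $\theta:=\max(\gamma,0)\in[0,1[$ and choose $M>0$ so that $\psi(t)=Mt^{1-\theta}$ satisfies $\psi'(t)\mu(t)=M(1-\theta)t^{-\theta}\mu(t)\ge 1$ on $]0,\eta]$, shrinking $\eta$ if necessary. For any $x\in U=\mathbb{B}(x^*,r)$ with $0<f(x)\le\eta$ this gives \cref{inequ11}. When $f(x)=f(x^*)$ the inequality is vacuous under the convention $0^0=1$, or holds trivially in the case $\theta=0$. The function $\psi$ is concave, $C^1$ on $]0,\eta]$, positive-derivative there, and has $\psi(0)=0$.
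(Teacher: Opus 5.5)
The paper gives no proof of this statement; it is quoted from Bolte--Daniilidis--Lewis. Your overall route (marginal function $\mu$, one-variable Puiseux expansion, a talweg curve, projection formula) is the standard one for nonsmooth Kurdyka--\L{}ojasiewicz inequalities. However, two steps would fail as you have written them.

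\textbf{The key inequality.} The bound $|(f\circ\theta)'(s)|\le\operatorname{dist}(0,\partial_C f(\theta(s)))\,\|\theta'(s)\|$ does not follow from Clarke's chain rule. That rule only gives $(f\circ\theta)'(s)=\langle\xi,\theta'(s)\rangle$ for \emph{some} uncontrolled $\xi\in\partial_C f(\theta(s))$. This bounds the derivative by $\max_\xi\|\xi\|$, not by the minimal norm.

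For general Lipschitz $f$ your inequality is false. Take $f(x)=\int_0^x(\mathbf{1}_E-\mathbf{1}_{E^c})$, where $E$ and $E^c$ meet every interval in positive measure. Then $\partial_C f\equiv[-1,1]$, so the distance vanishes identically, while $|f'|=1$ a.e.

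So the projection formula is not a patch for the a.e.\ and boundary subtleties; it is the whole content of this step and must be the main line. Concretely:
\begin{itemize}
\item Take a stratification of $\operatorname{dom} f$ near $x^*$ into finitely many analytic strata $M$ with $f|_M$ analytic and $\partial_C f(x)\subset\nabla_M f(x)+N_xM$ for $x\in M$. Then $\|\nabla_M f(x)\|\le\operatorname{dist}(0,\partial_C f(x))$.
\item For Clarke subgradients this needs a Whitney-(a) stratification of the graph, as in Bolte--Daniilidis--Lewis--Shiota. This matters in particular where $f$ is not locally Lipschitz, and there the paper's definition of $\partial_C$ does not even apply.
\item Choose the talweg as a subanalytic section $t\mapsto x(t)$ of $\{(t,x): x\in\overline{\mathbb{B}}(x^*,r),\ f(x)=t,\ \operatorname{dist}(0,\partial_C f(x))\le 2\mu(t)\}$, using tolerance $t^2$ instead if $\mu\equiv 0$.
\item For $t\in\,]0,\delta[$ the curve stays in one stratum $M$, and
$$1=\tfrac{d}{dt}f(x(t))=\langle\nabla_M f(x(t)),\dot x(t)\rangle\le 2\mu(t)\,\|\dot x(t)\|.$$
Hence $\int_0^\delta dt/\mu(t)\le 2\,\mathrm{length}(x)<\infty$, which excludes both $\mu\equiv0$ and $\gamma\ge1$.
\end{itemize}

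Two smaller corrections. This curve need not end at $x^*$: the near-minimizers may accumulate at any other zero of $f$ in the closed ball. Your requirement $\theta(0)=x^*$ is therefore unjustified, though also unnecessary. Also, what must be excluded is $\mu(t)\le Ct$, not distances ``of order $\ge t$''.

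\textbf{The level $f(x)=f(x^*)$.} Your treatment of this level is wrong. At $x=x^*$ itself, $\operatorname{dist}(0,\partial_C f(x^*))=0$. So for $\theta=0$, inequality \eqref{inequ11} reads $M\cdot 0\ge 1$, which is false. For $\theta>0$, the function $Mt^{1-\theta}$ has $\psi'(0^+)=+\infty$ and is not $C^1$ on $[0,\eta]$.

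In fact, read literally against the paper's definition (closed range $f(x^*)\le f(x)$ and $\psi\in C^1[0,\eta]$), the statement fails at every critical point, so no argument can establish it. What your argument proves, and what the later convergence analysis needs, is the standard form:
\begin{itemize}
\item the inequality for $f(x^*)<f(x)\le f(x^*)+\eta$;
\item with $\psi$ continuous on $[0,\eta]$ and $C^1$ on $]0,\eta]$.
\end{itemize}
State that explicitly instead of claiming the boundary case holds trivially.
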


Next, we apply the above results to establish the global convergence theorem for the sequence generated by IBDCA.

\begin{theorem}
Let the DC function $\phi = g - h$ in \cref{{DCproblem}} be subanalytic, $\mathrm{dom}\phi$ be closed, and $\phi|_{Domf}$ be continuous. Suppose that $\phi$ is coercive and that the iterative sequence $\{x^k\}$ generated by the IBDCA has a subsequence converging to a critical point $x^*$ (by \cref{{prop3.7}}). Assume for any $k \in \mathbb{N}^+$, we have $\lambda_k \leq \bar{\lambda}$, for some $\bar{\lambda} > 1$, then the sequence $\left\{ x^k\right\}$ converges to $x^*$. 
\label{thm4.6}
\end{theorem}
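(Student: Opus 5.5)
The plan is the standard Kurdyka–Łojasiewicz argument of Attouch–Bolte type, adapted to the IBDCA iteration. We need three ingredients: a sufficient decrease inequality, a relative error bound for a subgradient at $x^k$ (or at $y^k$), and the K-Ł inequality at $x^*$ provided by \cref{thm4.5}.

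\textbf{Sufficient decrease.} By \cref{equ10} in the proof of \cref{prop3.7} we have $\phi(x^{k+1})\le\phi(x^k)-\rho\|d^k\|^2$. Since $x^{k+1}-x^k=\lambda_k d^k$ with $1\le\lambda_k\le\bar\lambda$, we also have $\|x^{k+1}-x^k\|\le\bar\lambda\|d^k\|$. Hence $\phi(x^k)-\phi(x^{k+1})\ge \frac{\rho}{\bar\lambda^{2}}\|x^{k+1}-x^k\|^2$, and in any case $\ge\rho\|d^k\|^2$.

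\textbf{Relative error.} We use the K-Ł inequality at the points $y^k$ rather than $x^k$, because there a subgradient is explicitly available. Since $\nabla h(x^k)\in\partial g(y^k)$ and $h$ is $C^1$, we get $\nabla h(x^k)-\nabla h(y^k)\in\partial_C\phi(y^k)$. Coercivity and monotonicity confine $\{x^k\}$ to a bounded level set. Because $\|d^k\|\to0$, $\{y^k\}$ is bounded as well, so a local Lipschitz constant $L$ of $\nabla h$ on a compact neighbourhood gives $\operatorname{dist}(0,\partial_C\phi(y^k))\le L\|d^k\|$. Since $\phi(x^{k+1})\le\phi(y^k)\le\phi(x^k)$ and $\phi(x^k)\downarrow\phi^*=\phi(x^*)$, the values $\phi(y^k)$ also lie in $[\phi^*,\phi^*+\eta]$ for large $k$. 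Here $\phi(x^*)=\phi^*$ follows from continuity of $\phi$ on its closed domain along the convergent subsequence. If $\phi(y^k)=\phi^*$ for some $k$, then $d^{k+1}=0$ eventually and the sequence is finite, so that case is trivial.

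\textbf{Chaining and the main obstacle.} Put $r_k=\phi(x^k)-\phi^*$ and let $\psi$ be the desingularizing function. As long as $y^k\in\mathbb{B}(x^*,\varepsilon)\subset U$, concavity and the K-Ł inequality at $y^k$ give
\[
\psi(r_k)-\psi(r_{k+1})\ \ge\ \psi'(\phi(y^k)-\phi^*)\,\bigl(\phi(y^k)-\phi(x^{k+1})\bigr).
\]
This step needs $\phi(y^k)-\phi^*\le r_k$ together with the monotonicity of $\psi'$. It requires care, because \cref{equ10} gives decrease from $x^k$ to $y^k$, while the needed decrease $\phi(y^k)-\phi(x^{k+1})$ may be small. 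This is the step I expect to be the main obstacle. I would first try to apply K-Ł at $y^k$ and chain the differences $\psi(\phi(y^{k})-\phi^*)-\psi(\phi(y^{k+1})-\phi^*)$, using $\phi(y^{k+1})\le\phi(x^{k+1})-\rho\|d^{k+1}\|^2\le \phi(y^k)-\rho\|d^{k+1}\|^2$. Together with the relative error bound this yields
\[
\|d^{k+1}\|^2\le \frac{L}{\rho}\|d^k\|\bigl(\psi(s_k)-\psi(s_{k+1})\bigr),\qquad s_k=\phi(y^k)-\phi^*.
\]
Then $2ab\le a^2+b^2$ gives $2\|d^{k+1}\|\le\|d^k\|+\frac{L}{\rho}(\psi(s_k)-\psi(s_{k+1}))$.

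\textbf{Finite length and convergence.} Summing this from $K$ to $N$ shows $\sum_{k\ge K}\|d^k\|\le \|d^K\|+\frac{L}{\rho}\psi(s_K)$. The usual induction, started at an index $K$ with $x^{K}$ close to $x^*$ (along the subsequence), with $\|d^K\|$ and $\psi(s_K)$ small, keeps all subsequent $y^k$ inside $\mathbb{B}(x^*,\varepsilon)$. The induction uses $\|x^{k+1}-x^k\|\le\bar\lambda\|d^k\|$ and $\|y^k-x^k\|=\|d^k\|$. Hence $\sum\|x^{k+1}-x^k\|\le\bar\lambda\sum\|d^k\|<\infty$, so $\{x^k\}$ is Cauchy, and its limit must be the cluster point $x^*$.
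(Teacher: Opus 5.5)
Your proposal is correct and follows the paper's proof. The K-\L{} inequality is applied at $y^k$, with the relative error bound coming from $\nabla h(x^k)-\nabla h(y^k)\in\partial_C\phi(y^k)$; the values $\psi(\phi(y^k)-\phi^*)$ are chained using $\phi(y^k)-\phi(y^{k+1})\ge\rho\|d^{k+1}\|^2$; and $\lambda_k\le\bar\lambda$ transfers $\sum_k\|d^k\|<\infty$ to $\sum_k\|x^{k+1}-x^k\|<\infty$. The only difference is the localization step: the paper makes the K-\L{} and Lipschitz constants uniform over the compact set of cluster points (on which $\phi\equiv\phi^*$) by a finite cover, whereas you use the classical trapping induction around the single point $x^*$, which needs the K-\L{} property only at $x^*$.
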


\begin{proof}
Let $X$ be the set of cluster points of $\left\{x^k\right\}$. Then from \cref{prop3.7}, we have $\|d^k\| \to 0$, $\|x^{k+1} - x^k\| \to 0$ as $k \to \infty$. Note that for any $x \in X$, $\phi(x) \equiv \phi^* = \phi(x^*).$ Since $\|d^k\| = \|y^k-x^k\| \to 0$, then $X$ is also the set of all the cluster points of sequence $\{y^k\}$. Also, from
$$
\left\|y^k-y^{k+1}\right\| \leq\left\|x^{k+1}-y^k\right\|+\left\|d^{k+1}\right\| \\
\leq \|x^{k+1}-x^k\| + \|d^{k+1}\|,
$$
we have $\|x^{k+1} - y^{k+1}\| \to 0$. Since $\phi$ is coercive and $\{x^k\}$ is bounded, $X$ is thus compact. Since $\nabla h$ is locally Lipschitz, then for any $x \in X$, there exists $L_x > 0$ and $\epsilon_{1,x} > 0$, such that
$$
\|\nabla h(y)-\nabla h(z)\| \leq L_x\|y-z\|, \quad \forall y, z \in \mathbb{B}\left(x, \epsilon_{1, x}\right).
$$
According to the \cref{thm4.5} and the continuity of $\phi$, for any $x \in X$, there exist $\epsilon_{2,x} >0$, $M_x >0$ and $\theta_x \in [0,1[$, such that: 
$$
\left(1-\theta_x\right) M_x\left(\phi(y)-\phi\left(x^*\right)\right)^{-\theta_x} \operatorname{dist}\left(0, \partial_c \phi(y)\right) \leq 1, \quad \forall y \in \mathbb{B}\left(x, \epsilon_{2, x}\right) .
$$
For each $x\in X$, we take $\epsilon_x = \operatorname{min}{\epsilon_{1,x}, \epsilon_{2,x}}$. By the compactness of $X$, there exist ${z^i}_{i=1}^r \subset X$, such that $X \subset U_{i=1}^r \mathbb{B}\left(z^i, \epsilon_{z^i} /2\right).$ Taking $\epsilon = \operatorname{min}_i{\epsilon_{z^i}}$, we have for any sufficiently large $k\in \mathbb{N}^+$:
$$
x^k, y^k \in \bigcup_{i=1}^r \mathbb{B}\left(z^i, \epsilon_{z^i}\right) .
$$
Taking $\theta=\max_{i}\left\{\theta_{z^i}\right\}, K=\operatorname{min}_{i}\left\{\left(1-\theta_{z^i}\right) M_{z^i}\right\}$ and $ L=\min _i\left\{L_z{ }^i\right\}$, and using the fact that $\phi(z^i)=\phi(x^*)$ for any $z^i$, then we have:
\begin{equation}
\left|\phi\left(y^k\right)-\phi\left(x^*\right)\right|^\theta \leq K \operatorname{dist}\left(0, \partial_C\left(y^k\right)\right),
\label{equ12}
\end{equation}
and 
\begin{equation}
\left\|\nabla h\left(y^k\right)-\nabla h\left(x^k\right)\right\| \leq L\left\|y^k-x^k\right\|=L\left\|d^k\right\|.
\label{equ13}
\end{equation}
Using \cref{equ7}, we have 
\begin{equation}
\nabla h\left(x^k\right)-\nabla h\left(y^k\right) \in \partial g\left(y^k\right)-\nabla h\left(y^k\right)=\partial_C\left(\phi\left(y^k\right)\right).
\label{equ14}
\end{equation}
Combined with \cref{equ12,equ13,equ14}, we have:
\begin{equation}
\left|\phi\left(y^k\right)-\phi\left(x^*\right)\right|^\theta \leq K L\left\|d^k\right\|.
\label{equ15}
\end{equation}
\end{proof}
Since $\phi(t)=t^{1-\theta}$ is concave at $[0,+\infty[$, using \cref{def:algo4}, we have:
\begin{equation}
\begin{aligned}
& \left(\phi\left(y^k\right)-\phi\left(x^*\right)\right)^{1-\theta}-\left(\phi\left(y^{k+1}\right)-\phi\left(x^*\right)\right)^{1-\theta} \\
\geq & (1-\theta)\left(\phi\left(y^k\right)-\phi\left(x^*\right)\right)^{-\theta}\left(\phi\left(y^k\right)-\phi\left(y^{k+1}\right)\right) \\
\geq & (1-\theta)\left(\phi\left(y^k\right)-\phi\left(x^*\right)\right)^{-\theta}\left(\phi\left(x^{k+1}\right)-\phi\left(y^{k+1}\right)\right) \\
\geq & (1-\theta)\left(\phi\left(y^k\right)-\phi\left(x^*\right)\right)^{-\theta} \rho\left\|d^{k+1}\right\|^2 .
\label{equ16}
\end{aligned}
\end{equation}
We combine \cref{equ15,equ16} and apply the inequality $\frac{a^2}{b} \leq 2a-b, \forall a,b >0$, to obtain
\begin{equation}
\begin{aligned}
2\left\|d^{k+1}\right\|-\left\|d^k\right\| &\leq \frac{\left\|d^{k+1}\right\|^2}{\left\|d^k\right\|}+\left\|d^k\right\|\\
&\leq \frac{K L}{(1-\theta) \rho}\left(\left(\phi\left(y^k\right)-\phi\left(x^*\right)\right)^{1-\theta}-\left(\phi\left(y^{k+1}\right)-\phi\left(x^*\right)\right)^{1-\theta}\right),
\label{inequ17}
\end{aligned}
\end{equation}
Summing the above inequality from 0 to $N$ and letting $N \to \infty$, we have:
$$
\sum_{k=1}^{\infty}\left\|d^k\right\| \leq\left\|d^0\right\|+\frac{K L}{(1-\theta) \rho}\left(\phi\left(y^k\right)-\phi\left(x^*\right)\right)^{1-\theta}.
$$
Since $\lambda_k \leq \bar{\lambda}$, then:
$$
\sum_{k=1}^{\infty}\left\|x^{k+1}-y^k\right\|<(\bar{\lambda}-1) \sum_{k=1}^{\infty}\left\|d^k\right\|<\infty.
$$
Thus, $\left\{\sum_{k=1}^N\left\|d^k\right\|\right\}$ and $\left\{\sum_{k=1}^N\left\|x^{k+1}-y^k\right\|\right\}$ are all Cauchy sequences. For any integer $N$ and $p\in\mathbb{N}^+$,
$$
\left\|x^N-x^{N+p}\right\| \leq \sum_{k=N}^{N+p-1}\left\|x^{k+1}-x^k\right\|\leq\sum_{k=N}^{N+p-1}\left\|x^{k+1}-y^k\right\|+\sum_{k=N}^{N+p-1}\left\|d^k\right\|.
$$
Therefore, $\left\{x^k\right\}$ is also Cauchy sequence, $x^k \rightarrow x^*$, as $k \rightarrow \infty$.

\begin{remark}
In practice, we often let $\bar{\lambda_k} = \bar{\lambda} > 1$, $\forall k$, this approach is widely applied in previous research \cite{aragon2018accelerating,ferreira2021nmBDCA}. 
\end{remark}

\begin{theorem}
Assume the assumption in \cref{thm4.6} holds. Let $x^*$ be the limit point of $\{x^k\}$ and denote the Łojasiewicz exponent in \cref{thm4.6} as $\theta \in [0, 1[$. \\
\begin{enumerate}
    \item If $\theta =0$, then sequence $\{x^k\}$ converges to $x^*$ after a finite number of iterations.
    \item If $\theta \in ]0, \frac{1}{2}]$, then
    $$
    \left\|x^k-x^*\right\| \leq c_1\left(\frac{\gamma}{1+\gamma}\right)^k,
    $$
     where $\gamma=1+\frac{K^{\frac{1}{\theta}} L^{\frac{1}{\theta}}}{(1-\theta) \rho}, c_1=\bar{\lambda} \sum_{k=0}^{\infty}\left\|d^k\right\|;$
     \item If $\theta \in ] \frac{1}{2},1[$, then:
     $$
     \left\|x^k-x^*\right\| \leq c_2 k^{\frac{1-\theta}{1-2 \theta}},
    $$
    where $c_2=\bar{\lambda}\left(\frac{2 \theta-1}{\gamma(1-\theta)}\right)^{\frac{1-\theta}{1-2 \theta}}$.
\end{enumerate}
\label{thm_KLrate}
\end{theorem}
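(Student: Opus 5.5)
The plan follows the standard Attouch–Bolte rate argument, adapted to IBDCA. The quantity to control is the tail sum $\Delta_k := \sum_{i=k}^{\infty}\|d^i\|$. Write $x^{k+1}-x^k=\lambda_k d^k$ with $\lambda_k\le\bar\lambda$. By the convergence established in \cref{thm4.6},
\begin{equation*}
\|x^k-x^*\|\le\sum_{i=k}^\infty\|x^{i+1}-x^i\|\le\bar\lambda\,\Delta_k .
\end{equation*}
So every rate for $\Delta_k$ transfers to $\|x^k-x^*\|$ with an extra factor $\bar\lambda$; this explains the $\bar\lambda$ in $c_1,c_2$. Write $r_k:=\phi(y^k)-\phi(x^*)\ge 0$. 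It is nonincreasing because
\begin{equation*}
\phi(y^{k+1})\le\phi(x^{k+1})\le\phi(y^k)
\end{equation*}
by \cref{proofcritical1} and \cref{def:algo4}. The two ingredients are \cref{equ15}, namely $r_k^\theta\le KL\|d^k\|$ for large $k$, and the descent estimate
\begin{equation*}
r_k-r_{k+1}\ge \phi(x^{k+1})-\phi(y^{k+1})\ge\rho\|d^{k+1}\|^2
\end{equation*}
used in \cref{equ16}.

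Case $\theta=0$: with the convention $0^0=1$, \cref{equ15} reads $1\le KL\|d^k\|$ whenever $r_k>0$. Since $\|d^k\|\to0$ by \cref{prop3.7}, this forces $r_k=0$ for all large $k$. Then $\rho\|d^{k+1}\|^2\le r_k-r_{k+1}=0$, so $d^{k+1}=0$ and the algorithm stops at a critical point. This gives finite convergence.

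Case $\theta>0$: sum \cref{inequ17} from $k$ to $\infty$. With $C:=\frac{KL}{(1-\theta)\rho}$ this gives
\begin{equation*}
2\Delta_{k+1}-\Delta_k\le C\,r_k^{1-\theta},\qquad\text{that is,}\qquad \Delta_{k+1}\le \Delta_k-\Delta_{k+1}+C\,r_k^{1-\theta}.
\end{equation*}
Next eliminate $r_k$ using \cref{equ15}:
\begin{equation*}
r_k^{1-\theta}\le (KL\|d^k\|)^{(1-\theta)/\theta}=(KL)^{(1-\theta)/\theta}(\Delta_k-\Delta_{k+1})^{(1-\theta)/\theta}.
\end{equation*}
This yields the key recursion
\begin{equation*}
\Delta_{k+1}\le(\Delta_k-\Delta_{k+1})+C'(\Delta_k-\Delta_{k+1})^{\frac{1-\theta}{\theta}}.
\end{equation*}

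If $\theta\in]0,\frac12]$, then $\frac{1-\theta}{\theta}\ge1$. Since $\Delta_k-\Delta_{k+1}=\|d^k\|\to0$, for large $k$ we have $(\Delta_k-\Delta_{k+1})^{\frac{1-\theta}{\theta}}\le\Delta_k-\Delta_{k+1}$. Hence $\Delta_{k+1}\le\gamma(\Delta_k-\Delta_{k+1})$ for a constant $\gamma$, which is the $\gamma$ of the statement up to how the constants are tracked. This gives
\begin{equation*}
\Delta_{k+1}\le\frac{\gamma}{1+\gamma}\Delta_k,\qquad \Delta_k\le\Big(\frac{\gamma}{1+\gamma}\Big)^k\Delta_0 .
\end{equation*}
Taking $c_1=\bar\lambda\Delta_0$ recovers the linear rate. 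If there are finitely many initial indices where the smallness fails, they are absorbed into the constant.

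If $\theta\in]\frac12,1[$, then for large $k$ the power term dominates, so $\Delta_{k+1}^{\theta/(1-\theta)}\le \gamma'(\Delta_k-\Delta_{k+1})$. This is handled by the standard comparison trick with $h(s)=s^{-\theta/(1-\theta)}$. Split according to whether $\Delta_{k+1}^{-\theta/(1-\theta)}\le 2\Delta_k^{-\theta/(1-\theta)}$ or not. In both cases one obtains
\begin{equation*}
\Delta_{k+1}^{\frac{1-2\theta}{1-\theta}}-\Delta_k^{\frac{1-2\theta}{1-\theta}}\ge \mu>0
\end{equation*}
with $\mu$ proportional to $\frac{2\theta-1}{\gamma(1-\theta)}$. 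Note the exponent $\frac{1-2\theta}{1-\theta}$ is negative, so the left side is an increment of an increasing quantity. Telescoping gives $\Delta_k\le(\mu k)^{\frac{1-\theta}{1-2\theta}}$, which is the sublinear rate with the stated $c_2$.

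The main obstacle is bookkeeping. First, \cref{equ15} and \cref{inequ17} hold only for $k$ large, so each rate holds eventually, and the constants must be adjusted or the index shifted to match $c_1,c_2$. Second, \cref{equ15} controls $r_k$ by $\|d^k\|$, whereas the recursion is naturally stated for $\Delta_{k+1}$; these must be aligned carefully, which is where the index offset in \cref{inequ17} matters. Third, the sublinear case requires the usual two-case argument. I would first verify the linear case cleanly and then reuse its recursion for the sublinear case.
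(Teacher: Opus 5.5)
\paragraph{Parts 1 and 2.} Your skeleton is the paper's: tail sums $\Delta_k$, summing \eqref{inequ17} to get $\Delta_{k+1}\le\|d^k\|+C r_k^{1-\theta}$ (the paper's \eqref{equ18} shifted by one), inserting \eqref{equ15}, and splitting at $\theta=\tfrac12$. Parts 1 and 2 are fine.

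For $\theta=0$, your argument ($r_k=0$ eventually, hence $d^{k+1}=0$) is a slightly more careful version of the paper's fixed-decrease argument.

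For $\theta\in]0,\tfrac12]$, your constant is exactly the stated $\gamma=1+\frac{(KL)^{1/\theta}}{(1-\theta)\rho}$, not merely ``up to bookkeeping.'' The recursion $\Delta_{k+1}\le\gamma(\Delta_k-\Delta_{k+1})$ then gives $c_1$, provided you make the paper's ``without loss of generality'' re-indexing: \eqref{equ15} and $\|d^k\|<1$ hold from $k=0$ on. Absorbing the early indices instead would change $c_1$. This derivation is actually the step the paper's text garbles: it writes $D_k\le\gamma(D_{k-1}-D_k)^{\frac{1-\theta}{\theta}}$ and never derives the geometric decay.

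\paragraph{The gap in part 3.} The gap is your claim that the two-case argument yields the stated $c_2$. It does not.
\begin{itemize}
\item From $\Delta_{k+1}\le\gamma\|d^k\|^{\frac{1-\theta}{\theta}}$ you get $\Delta_{k+1}^{\frac{\theta}{1-\theta}}\le\gamma^{\frac{\theta}{1-\theta}}(\Delta_k-\Delta_{k+1})$.
\item With split ratio $R>1$, the two cases give the increments $\frac{2\theta-1}{(1-\theta)R\gamma^{\theta/(1-\theta)}}$ and $(R^{(2\theta-1)/\theta}-1)\Delta_0^{\frac{1-2\theta}{1-\theta}}$.
\item Their minimum $\mu$ is strictly smaller than $\frac{2\theta-1}{\gamma(1-\theta)}$. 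Because the exponent is negative, the resulting bound $\bar\lambda(\mu k)^{\frac{1-\theta}{1-2\theta}}$ is strictly weaker than $c_2k^{\frac{1-\theta}{1-2\theta}}$.
\end{itemize}
Better bookkeeping cannot close this. Sequences satisfying the recursion with equality have $\Delta_k\sim\gamma\big(\frac{2\theta-1}{\gamma(1-\theta)}k\big)^{\frac{1-\theta}{1-2\theta}}$. So any argument that goes through $\|x^k-x^*\|\le\bar\lambda\Delta_k$ and this recursion misses $c_2$ asymptotically by a factor $\gamma$.

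The paper reaches $c_2$ only through the bound $D_k^{q}-D_{k-1}^{q}\ge q\,D_k^{-\theta/(1-\theta)}(D_k-D_{k-1})$ with $q=\frac{1-2\theta}{1-\theta}<0$. Since $s\mapsto s^q$ is convex, the tangent-line inequality at $D_k$ holds in the opposite direction. For example, $\theta=\tfrac34$, $D_k=1$, $D_{k-1}=2$ would require $\tfrac34\ge 2$. That step also silently replaces $\gamma^{\theta/(1-\theta)}$ by $\gamma$.

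So your two-case (Attouch--Bolte) route is the correct one, and it is precisely what repairs the paper's invalid step. What it proves is the rate $O\big(k^{\frac{1-\theta}{1-2\theta}}\big)$ with constant $\bar\lambda\,\mu^{\frac{1-\theta}{1-2\theta}}$ as above. You should state that constant rather than claim the stated $c_2$.
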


\begin{proof}
if $\theta=0$, according to \cref{equ15}, we have:
$$
\left\|d^k\right\| \geq \frac{1}{K L},
$$
which gives
$$
f\left(x^k\right)-f\left(x^{k+1}\right) \geq \rho\left\|d^k\right\|^2 \geq \frac{\rho}{K^2 L^2}.
$$
Hence, the sequence $\{x^k\}$ converges to $x^*$ after a finite number of iterations.

If $0<\theta<1$, we set $D_k=\sum_{j=k}^{\infty} \|d^k\|$. For $k>0$, $p>0$, summing \cref{inequ17} from $k-1$ to $k+p$ and letting $p \to \infty$, we have
\begin{equation}
D_k \leq\left\|d^{k-1}\right\|+\frac{K L}{(1-\theta) \rho}\left(\phi\left(y^{k-1}\right)-\phi\left(x^*\right)\right)^{1-\theta}.
\label{equ18}
\end{equation}
\end{proof}
Combining \cref{equ15} and \cref{equ18} yields
\begin{equation}
D_k \leq\left\|d^{k-1}\right\|+\frac{K^{\frac{1}{\theta}} L^{\frac{1}{\theta}}}{(1-\theta) \rho}\left\|d^{k-1}\right\|^{\frac{1-\theta}{\theta}},
\label{equ19}
\end{equation}
where we assume, without loss of generality, that for any $k$, $\|d^k\| <1$.

If $\theta \in ]0,\frac{1}{2}]$, then $\|d^k\|^{\frac{1-\theta}{\theta}} \leq \|d^k\|$. We hence obtain the following from \cref{equ19}:
$$
D_k \leq \gamma\left(D_{k-1}-D_k\right)^{\frac{1-\theta}{\theta}} .
$$
Letting $\gamma = 1+ \frac{k^{\frac{1}{\theta}}, L^{\frac{1}{\theta}}}{(1-\theta)\rho}$, we obtain
$$
D_k^{\frac{1-2 \theta}{1-\theta}}-D_{k-1}^{\frac{1-2 \theta}{1-\theta}} \geq \frac{1-2 \theta}{1-\theta} D_k^{\frac{-\theta}{1-\theta}}\left(D_k-D_{k-1}\right) \geq \frac{2 \theta-1}{\gamma(1-\theta)}.
$$
Summing these inequalities up to $\infty$ and using $\lambda_j \leq \bar{\lambda}$, we thus have:
$$
\left\|x^k-x^*\right\| \leq \bar{\lambda} D_k \leq \bar{\lambda}\left(D_0^{\frac{1-2 \theta}{1-\theta}}+\frac{2 \theta-1}{\gamma(1-\theta)} k\right)^{\frac{1-\theta}{1-2 \theta}} \leq \bar{\lambda}\left(\frac{2 \theta-1}{\gamma(1-\theta)} k\right)^{\frac{1-\theta}{1-2 \theta}} .
$$

\section{Application in Image Restoration}
\label{sec:exp}
 Image denoising is a crucial aspect of low-level image processing \cite{xie2013low,salmon2014poisson}. While numerous mathematical variational methods have been formulated specifically for addressing additive Gaussian noise, research on non-Gaussian noise remains an area of considerable potential due to its prevalent occurrence in various scientific and engineering applications. In this section, we present how the proposed IBDCA is applied to efficiently solve an important DC problem in image restoration.
\subsection{Restoring degraded images corrupted by Cauchy noise}
Cauchy noise is a type of impulse noise. Mathematically, a degraded image with Cauchy noise can be represented as \( f = u + v \), where \( u \in \mathbb{R}^m = \mathbb{R}^{m_1 \times m_2} \) is the original image and \( v \in \mathbb{R}^m \) represents some Cauchy noise. The random variable \( V \) follows a (centered) Cauchy distribution if it has a probability density function as
$$
g(v)=\frac{1}{\pi} \frac{\gamma}{\gamma^2+v^2}.
$$
The Cauchy noise removal model based on maximum a posteriori estimation (MAP) \cite{Sciacchitano2015variational} can be formulated as the following:
\begin{equation}
\min _{u \in R^m} E(u):=T V(u)+\frac{\mu}{2}\left\langle\log\left(\gamma^2+(u-f)^2\right), \mathbf{1}\right\rangle,
\label{equ20}
\end{equation}
where $T V(u)=\|\nabla u\|_{2,1}=\left\|\sqrt{\left|\nabla_x u\right|^2+\left|\nabla_y u\right|^2}\right\|_1$ denotes the discrete total variation \cite{chambolle2004algorithm}. The gradient operators representing the horizontal and vertical directions are denoted as $\nabla_x$ and $\nabla_y$, respectively, and 
 $\mu>0$ is the trade off between the total variation regularization term and the fidelity term. 

In \cite{mei2018cauchy}, a non-convex ADMM (alternating direction method of multipliers) is employed to solve \cref{equ20}, while here we utilize DC methods. \cref{equ20} can be expressed as a DC problem:
$$
\min _{u \in \mathbb{R}^m} G(u)-H(u),
$$
where we have:
\begin{equation}
\begin{aligned}
& G(u)=T V(u)+\frac{c}{2}\|u\|^2, \\
& H(u)=-\frac{\mu}{2}\left\langle\log\left(\gamma^2+(u-f)^2\right), \mathbf{1}\right\rangle+\frac{c}{2}\|u\|^2,
\label{equ21}
\end{aligned}
\end{equation}
here, $c >0$ denotes the parameter that ensures the strong convexity of $G(u)$ and $H(u)$. Firstly, we present the following lemma.

\begin{proposition}
When $c \geq \mu/\gamma^2$, the function $H$ defined in \cref{equ21} is convex. Thus, when $c > \mu/\gamma^2$, $H$ is $(c - \mu/\gamma^2)$-strongly convex.\\
\end{proposition}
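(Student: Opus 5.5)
The function $H$ in \cref{equ21} is separable across pixels:
$$H(u)=\sum_{i=1}^m \psi(u_i),\qquad \psi(t)=-\frac{\mu}{2}\log\bigl(\gamma^2+(t-f_i)^2\bigr)+\frac{c}{2}t^2 .$$
So the plan is to reduce the statement to a one-dimensional second-derivative bound. A sum of convex functions of distinct coordinates is convex. Moreover, if each $\psi_i-\frac{\kappa}{2}t^2$ is convex, then $H-\frac{\kappa}{2}\|u\|^2=\sum_i\bigl(\psi_i(u_i)-\frac{\kappa}{2}u_i^2\bigr)$ is convex. In that case $H$ is $\kappa$-strongly convex in the sense of \cref{def:convex}.

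Next I compute $\psi''$. Set $s=t-f_i$. The derivative of $\log(\gamma^2+s^2)$ is $2s/(\gamma^2+s^2)$. Differentiating once more gives
$$\frac{2(\gamma^2+s^2)-4s^2}{(\gamma^2+s^2)^2}=\frac{2(\gamma^2-s^2)}{(\gamma^2+s^2)^2}.$$
Hence
$$\psi''(t)=c-\mu\,\frac{\gamma^2-s^2}{(\gamma^2+s^2)^2}.$$

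The key step is the uniform bound
$$\frac{\gamma^2-s^2}{(\gamma^2+s^2)^2}\le\frac{1}{\gamma^2}\qquad\text{for all } s\in\mathbb{R}.$$
The numerator satisfies $\gamma^2-s^2\le\gamma^2+s^2$, so the quotient is at most $1/(\gamma^2+s^2)\le 1/\gamma^2$. Equality holds at $s=0$, so the constant $\mu/\gamma^2$ is sharp. It follows that $\psi''(t)\ge c-\mu/\gamma^2$ for every $t$.

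To conclude, when $c\ge\mu/\gamma^2$ we have $\psi''\ge0$, so each $\psi$ is convex; hence $H$ is convex. When $c>\mu/\gamma^2$, put $\kappa=c-\mu/\gamma^2$. Then $\bigl(\psi-\frac{\kappa}{2}t^2\bigr)''\ge0$, so $H-\frac{\kappa}{2}\|u\|^2$ is convex, i.e. $H$ is $(c-\mu/\gamma^2)$-strongly convex. Since $\psi$ is $C^\infty$, using the second-derivative criterion is legitimate. The only genuinely nontrivial point is the uniform bound on the rational function above, and it follows from the elementary comparison of numerator and denominator.
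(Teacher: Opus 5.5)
Your proof is correct and follows essentially the same route as the paper: reduce to the one-dimensional function, compute $\psi''(t)=c-\mu(\gamma^2-s^2)/(\gamma^2+s^2)^2$, and bound the rational term by its value $1/\gamma^2$ at $s=0$. Your version is a bit more complete than the paper's. The paper only asserts where the maximum is attained, and it has a sign slip (it writes $h''\le 0$, i.e.\ concave); you instead justify the bound via $\gamma^2-s^2\le\gamma^2+s^2$, track the shift by $f_i$, and check the strong-convexity claim explicitly.
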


\begin{proof} 
It is sufficient to consider the univariate case. We let 
$$h(t) = -\frac{\mu}{2}\log\left(\gamma^2 + t^2\right) + \frac{c}{2}t^2.$$
Then through direct calculation, we have:
$$h'(t) = -\frac{\mu t}{\gamma^2 + t^2} + ct,$$
$$h''(t) = \frac{\mu(t^2 - \gamma^2)}{(\gamma^2 + t^2)^2} + c.$$
We only need to ensure $h''(t) \leq 0$ to so that $h(t)$ is concave, i.e., 
$$
c \geq \frac{\mu(\gamma^2-t^2)}{(\gamma^2 + t^2)^2}.
$$ 
Note that the R.H.S of the above inequality reaches its maximum value at $t=0$.
\end{proof}

The DC process for solving \cref{equ20} are as follows:
\begin{equation*}
\begin{aligned}
& v^k=\nabla h\left(u^k\right)=\frac{\mu \left( u^k-f\right)}{\gamma^2+\left(u^k-f\right)^2}+c u^k . \\
& u^k=\arg \min _u T V(u)+\frac{c}{2}\|u\|^2-\left\langle v^k, u\right\rangle .
\label{equ22}
\end{aligned}
\end{equation*}
The primal-dual algorithm in \cite{chambolle2011first} can be applied to efficiently solve the $u$-subproblem. Since the objective function in the $u$-subproblem is strongly convex, the convergence rate of the primal-dual algorithm can be accelerated to $O(1/N^2)$ \cite{chambolle2011first}.\\

According to \cref{exmp4.4}, the objective function in DC problem \cref{equ20} is sub-analytic. In addition, $H$ is locally Lipschitz continuous and $E = G-H$ is coercive. Hence, the global convergence of our proposed algorithm is guaranteed by \cref{thm4.6,thm_KLrate}.

% \begin{lemma}[\cite{rudin1964principles}]
% We let $F \text{:} D \rightarrow \in \mathcal{R}^n$ be a differentiable vector function defined in the convex region $D$ in $\mathcal{R}^m$. Then, for any $x\text{,} y \in D$, there exists $\xi \in {x+(1-\theta)y: \theta \in [0,1]}$, such that:
% \begin{equation*}
% \|F(x)-F(y)\| \leq \|D F(\xi)\| \|x-y\|,
% \end{equation*}
% where $DF $ is the Jacobi matrix of $F$.
% \end{lemma}

% \begin{lemma}[\cite{johnson1985matrix}]
% Let matrix $A = (a_{ij}) \in \mathcal{L}(\mathcal{R}^m, \mathcal{R}^n)$, then:
% $$
% \|A\| \leq\left(\sum_{i=1}^n \sum_{j=1}^m\left|a_{i j}\right|^2\right)^{\frac{1}{2}}.
%\end{lemma}
% $$

\begin{figure*}[!ht]
\centering
\begin{minipage}[t]{0.3\linewidth}
\centering
\includegraphics[width=0.99\linewidth]{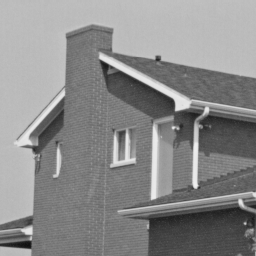}
\centerline{{\footnotesize (b) House}}
\end{minipage}%
\vspace{0.1cm}
\begin{minipage}[t]{0.3\linewidth}
\centering
\includegraphics[width=0.99\linewidth]{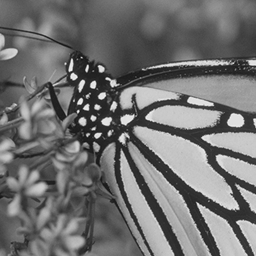}
\centerline{{\footnotesize (c) Butterfly}}
\end{minipage}\\
\begin{minipage}[t]{0.3\linewidth}
\centering
\includegraphics[width=0.99\linewidth]{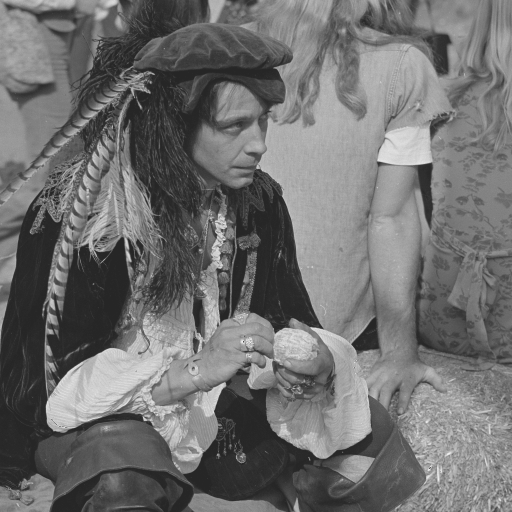}
\centerline{{\footnotesize (d) Man}}
\end{minipage}
\begin{minipage}[t]{0.3\linewidth}
\centering
\includegraphics[width=0.99\linewidth]{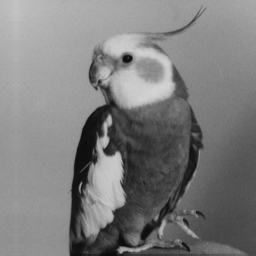}
\centerline{{\footnotesize (e) Bird}}
\end{minipage}
\begin{minipage}[t]{0.3\linewidth}
\centering
\includegraphics[width=0.99\linewidth]{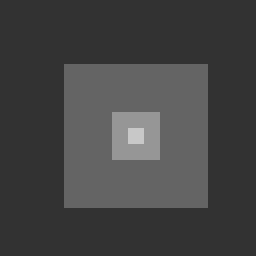}
\centerline{\footnotesize (f) Squares }
\end{minipage}
\centering
\caption{Test images: cameraman, house, butterfly, man, birds, and squares. Among these images, cameraman, house, butterfly, birds, and squares are of the size $256 \times 256$. Man is of the size $512 \times 512$.}
\centering
\label{gd}
\end{figure*}

\renewcommand{\arraystretch}{1.1}
\begin{table*}[!ht]
\small
\centering
\caption{PSNR(dB)/ReErr values of denoising on our five test images. The Cauchy noise levels are $\gamma = 3 \text{,} 5$ respectively. The CPU time (s) and number of iterations for the recovered image are also presented. We provide average results regarding each metric. Our proposed algorithm generally performs best among these comparison methods at these two different levels of Cauchy noise.}
\scalebox{0.85}{
\begin{tabular}{|lc||c|c|c|c|c|c|}
\hline 
Image & Cauchy noise level & Method & PSNR & ReErr & Time (s) & No. of Iter \\
\hline 
House & $\gamma = 3$ & ADMM & 29.21 & 0.0607 & 26.35 & 200 \\
House & $\gamma = 3$ & DCA & 28.65 & 0.0648 & 9.59 & 200 \\
House & $\gamma = 3$ & nmBDCA & 29.19 & 0.0609 & 4.64 & 71 \\
House & $\gamma = 3$ & Ours & \textbf{30.26} & \textbf{0.0538} & \textbf{2.93} & \textbf{45} \\
\hline 
House & $\gamma = 5$ & ADMM & 27.05 & 0.0779 & 23.97 & 200 \\
House & $\gamma = 5$ & DCA & 26.45 & 0.0834 & 15.15 & 122 \\
House & $\gamma = 5$ & nmBDCA & 26.85 & 0.0797 & 6.99 & 61 \\
House & $\gamma = 5$ & Ours & \textbf{27.23} & \textbf{0.0763} & \textbf{3.63} & \textbf{29} \\
\hline 
Butterfly & $\gamma = 3$ & ADMM & 26.91 & 0.0923 & 33.87 & 200 \\
Butterfly & $\gamma = 3$ & DCA & 26.61 & 0.0955 & 10.34 & 146 \\
Butterfly & $\gamma = 3$ & nmBDCA & 26.93 & 0.0920 & 4.39 & 68 \\
Butterfly & $\gamma = 3$ & Ours & \textbf{27.10} & \textbf{0.0913} & \textbf{3.37} & \textbf{51} \\
\hline 
Butterfly & $\gamma = 5$ & ADMM & 23.56 & 0.1356 & 28.50 & 200 \\
Butterfly & $\gamma = 5$ & DCA & 23.62 & 0.1347 & 17.71 & 121 \\
Butterfly & $\gamma = 5$ & nmBDCA & 23.79 & 0.1323 & 7.70 & 63 \\
Butterfly & $\gamma = 5$ & Ours & \textbf{23.83} & \textbf{0.1319} & \textbf{3.68} & \textbf{30} \\
\hline 
Man & $\gamma = 3$ & ADMM & 28.09 & 0.0827 & 117.43 & 200 \\
Man & $\gamma = 3$ & DCA & 27.79 & 0.0857 & 37.40 & 154 \\
Man & $\gamma = 3$ & nmBDCA & 28.14 & 0.0823 & 17.59 & 71 \\
Man & $\gamma = 3$ & Ours & \textbf{28.71} & \textbf{0.0771} & \textbf{11.72} & \textbf{46} \\
\hline 
Man & $\gamma = 5$ & ADMM & 25.48 & 0.1118 & 111.38 & 200 \\
Man & $\gamma = 5$ & DCA & 25.32 & 0.1138 & 66.55 & 123 \\
Man & $\gamma = 5$ & nmBDCA & 25.53 & 0.1111 & 32.32 & 63 \\
Man & $\gamma = 5$ & Ours & \textbf{25.89} & \textbf{0.1066} & \textbf{16.41} & \textbf{30} \\
\hline 
Bird & $\gamma = 3$ & ADMM & 30.53 & 0.0568 & 22.07 & 200 \\
Bird & $\gamma = 3$ & DCA & 29.55 & 0.0635 & 8.21 & 150 \\
Bird & $\gamma = 3$ & nmBDCA & 30.15 & 0.0593 & 3.88 & 69 \\
Bird & $\gamma = 3$ & Ours & \textbf{31.75} & \textbf{0.0493} & \textbf{2.64} & \textbf{46} \\
\hline 
Bird & $\gamma = 5$ & ADMM & 28.34 & 0.0731 & 24.52 & 200 \\
Bird & $\gamma = 5$ & DCA & 27.21 & 0.0833 & 14.47 & 119 \\
Bird & $\gamma = 5$ & nmBDCA & 27.79 & 0.0778 & 7.03 & 60 \\
Bird & $\gamma = 5$ & Ours & \textbf{28.67} & \textbf{0.0703} & \textbf{3.94} & \textbf{32} \\
\hline 
Squares & $\gamma = 3$ & ADMM & 33.04 & 0.0774 & 30.43 & 200 \\
Squares & $\gamma = 3$ & DCA & 31.10 & 0.0968 & 30.43 & 157 \\
Squares & $\gamma = 3$ & nmBDCA & \textbf{32.22} & \textbf{0.0852} & 5.02 & 73 \\
Squares & $\gamma = 3$ & Ours & 32.18 & 0.0856 & \textbf{4.47} & 53 \\
\hline 
Squares & $\gamma = 5$ & ADMM & 31.36 & 0.0940 & 31.63 & 200 \\
Squares & $\gamma = 5$ & DCA & 29.09 & 0.1221 & 19.68 & 126 \\
Squares & $\gamma = 5$ & nmBDCA & 30.10 & 0.1086 & 9.52 & 64 \\
Squares & $\gamma = 5$ & Ours & \textbf{31.61} & \textbf{0.0914} & \textbf{5.10} & \textbf{32} \\
\hline 
Average & $\gamma = 3$ & ADMM & 29.56 & 0.0740 & 46.03 & 200 \\
Average & $\gamma = 3$ & DCA & 28.74 & 0.0813 & 19.19 & 162 \\
Average & $\gamma = 3$ & nmBDCA & 29.33 & 0.0759 & 7.10 & 71 \\
Average & $\gamma = 3$ & Ours & \textbf{30.00} & \textbf{0.0714} & \textbf{5.03} & \textbf{48} \\
\hline 
Average & $\gamma = 5$ & ADMM & 25.48 & 0.1118 & 44.00 & 200 \\
Average & $\gamma = 5$ & DCA & 25.32 & 0.1138 & 26.71 & 122 \\
Average & $\gamma = 5$ & nmBDCA & 25.71 & 0.1019 & 32.32 & 63 \\
Average & $\gamma = 5$ & Ours & \textbf{27.44} & \textbf{0.0953} & \textbf{6.55} & \textbf{31} \\
\hline
\end{tabular}
}
\label{selectedimg_cauchy}
\end{table*}

%%%  gamma 3 / house %%%
\begin{figure*}[ht]
\centering
\begin{minipage}[t]{0.28\linewidth}
\centering
\includegraphics[width=0.99\linewidth]{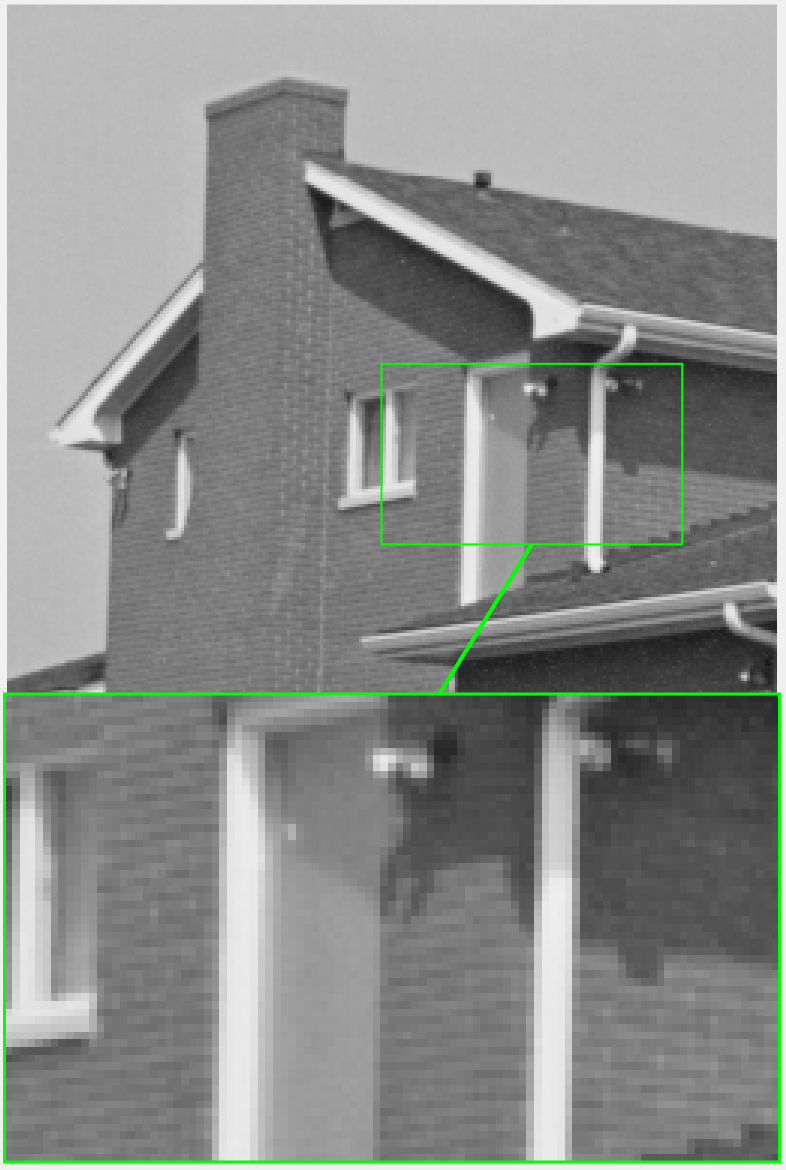}
\centerline{\footnotesize (a) House}
\centerline{\footnotesize Groundtruth}
\end{minipage}%
\vspace{0.1cm}
\begin{minipage}[t]{0.28\linewidth}
\centering
\includegraphics[width=0.99\linewidth]{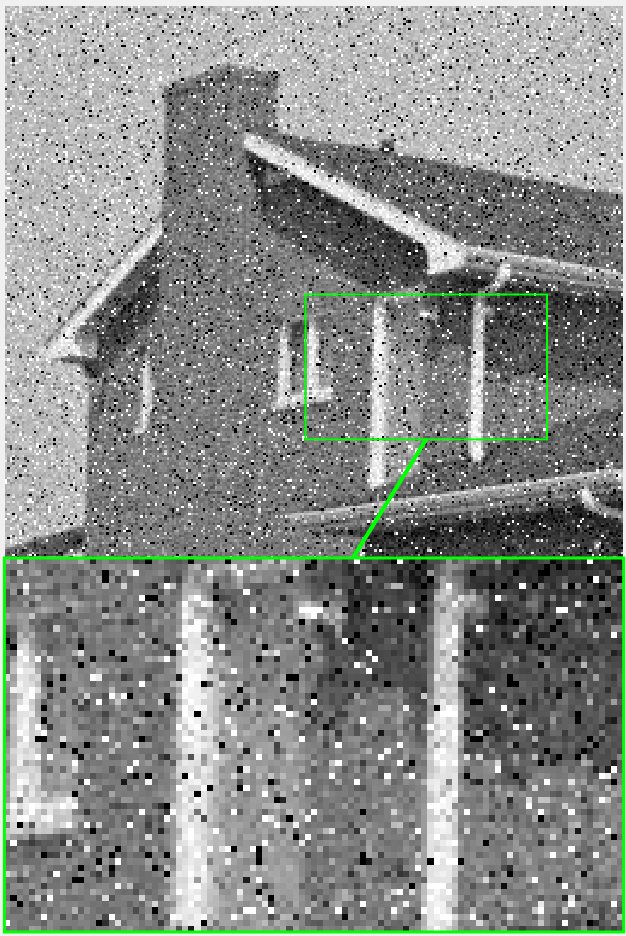}
\centerline{{\footnotesize (b) Noisy image}}
\centerline{{\footnotesize PSNR:16.79}}
\end{minipage}%
\vspace{0.1cm}
\begin{minipage}[t]{0.28\linewidth}
\centering
\includegraphics[width=0.99\linewidth]{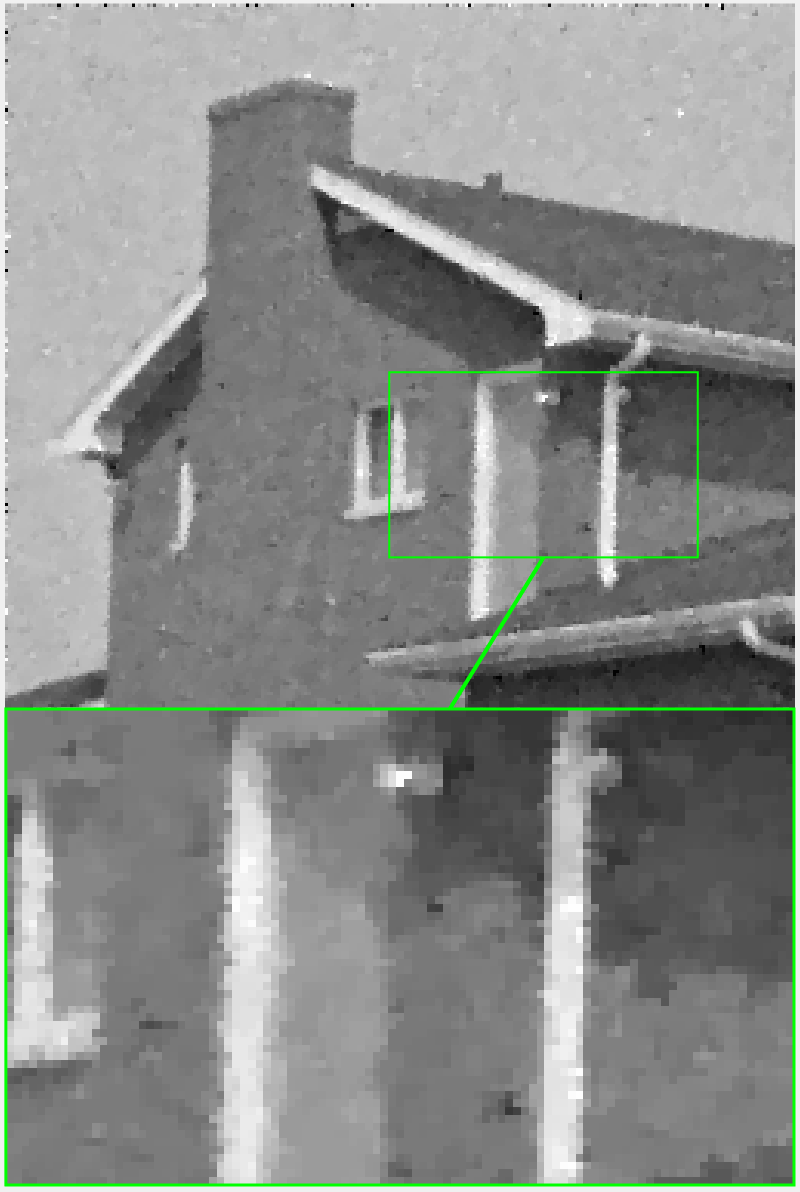}
\centerline{{\footnotesize (c) ADMM\cite{mei2018cauchy}}}
\centerline{{\footnotesize PSNR:29.21}}
\end{minipage}\\
%\vspace{+0.8cm}
\begin{minipage}[t]{0.28\linewidth}
\centering
\includegraphics[width=0.99\linewidth]{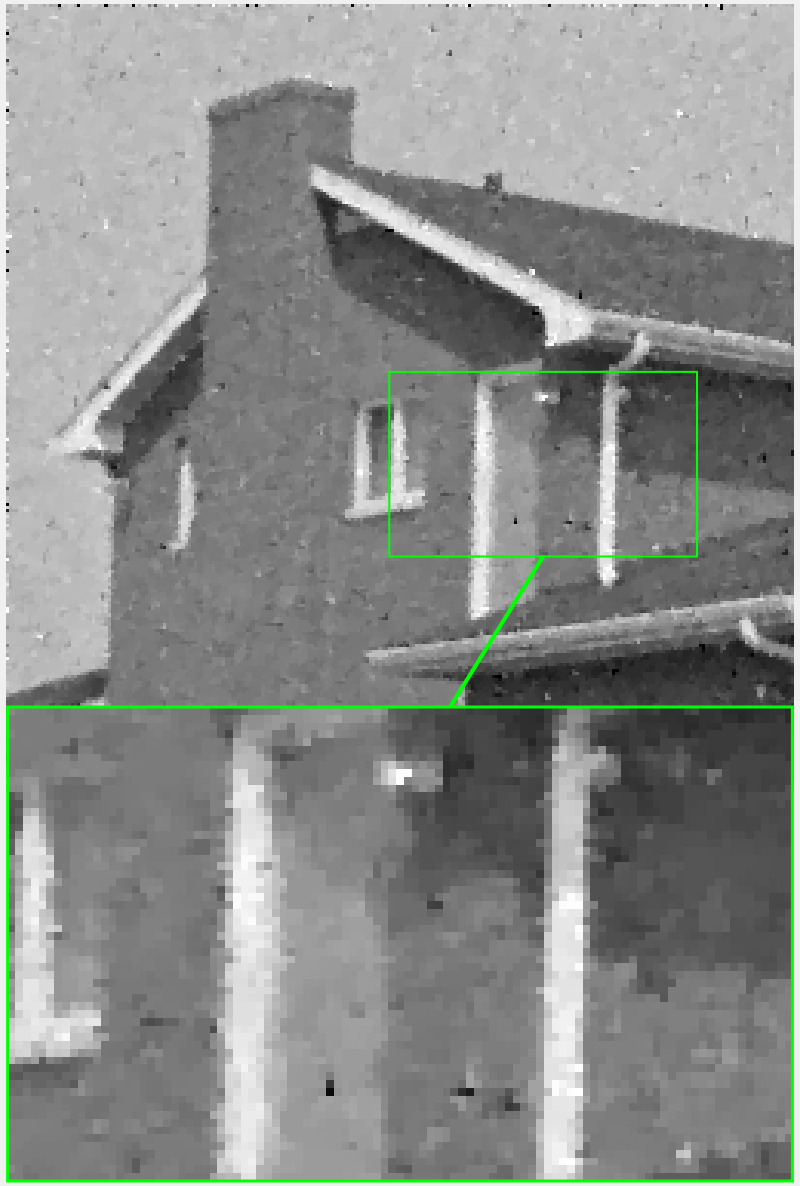}
\centerline{\footnotesize (d) DCA\cite{tao1996numerical}}
\centerline{\footnotesize PSNR:28.65}
\end{minipage}
\begin{minipage}[t]{0.28\linewidth}
\centering
\includegraphics[width=0.99\linewidth]{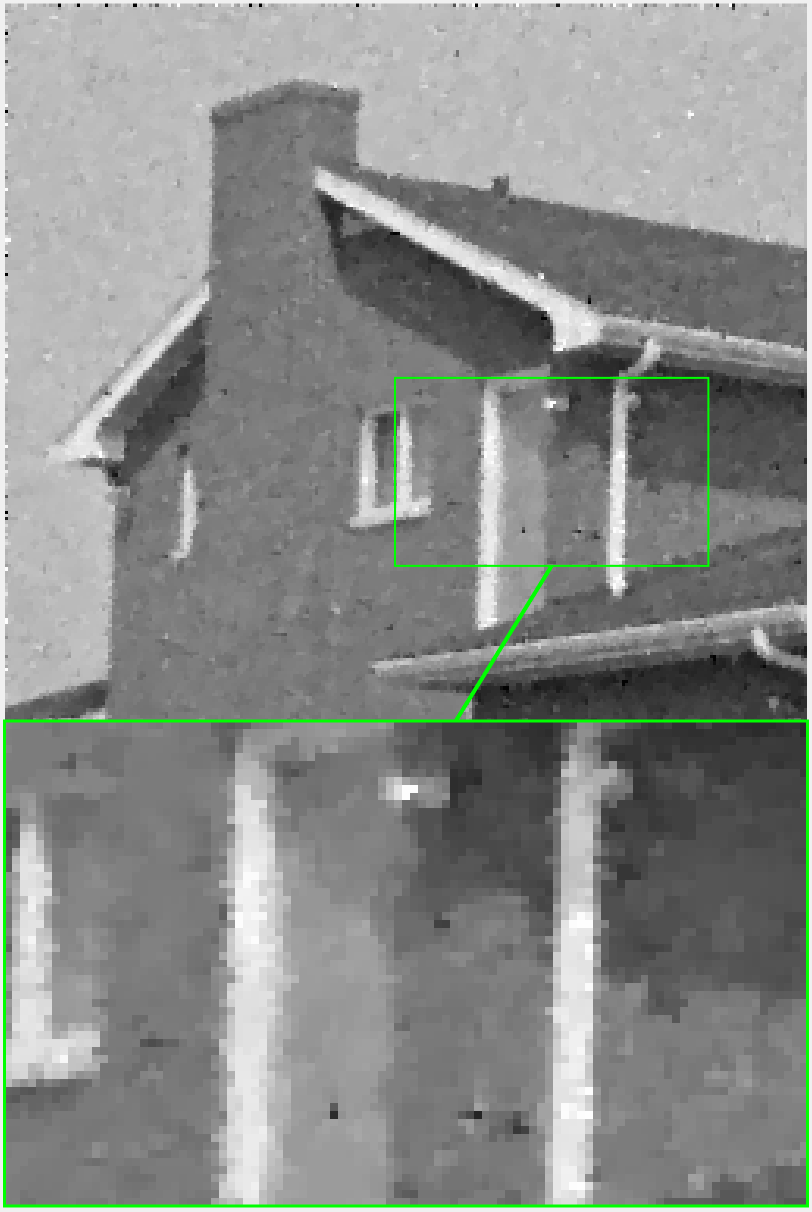}
\centerline{\footnotesize (e) nmBDCA\cite{ferreira2024boosted}}
\centerline{\footnotesize PSNR:29.19}
\end{minipage}
\begin{minipage}[t]{0.28\linewidth}
\centering
\includegraphics[width=0.99\linewidth]{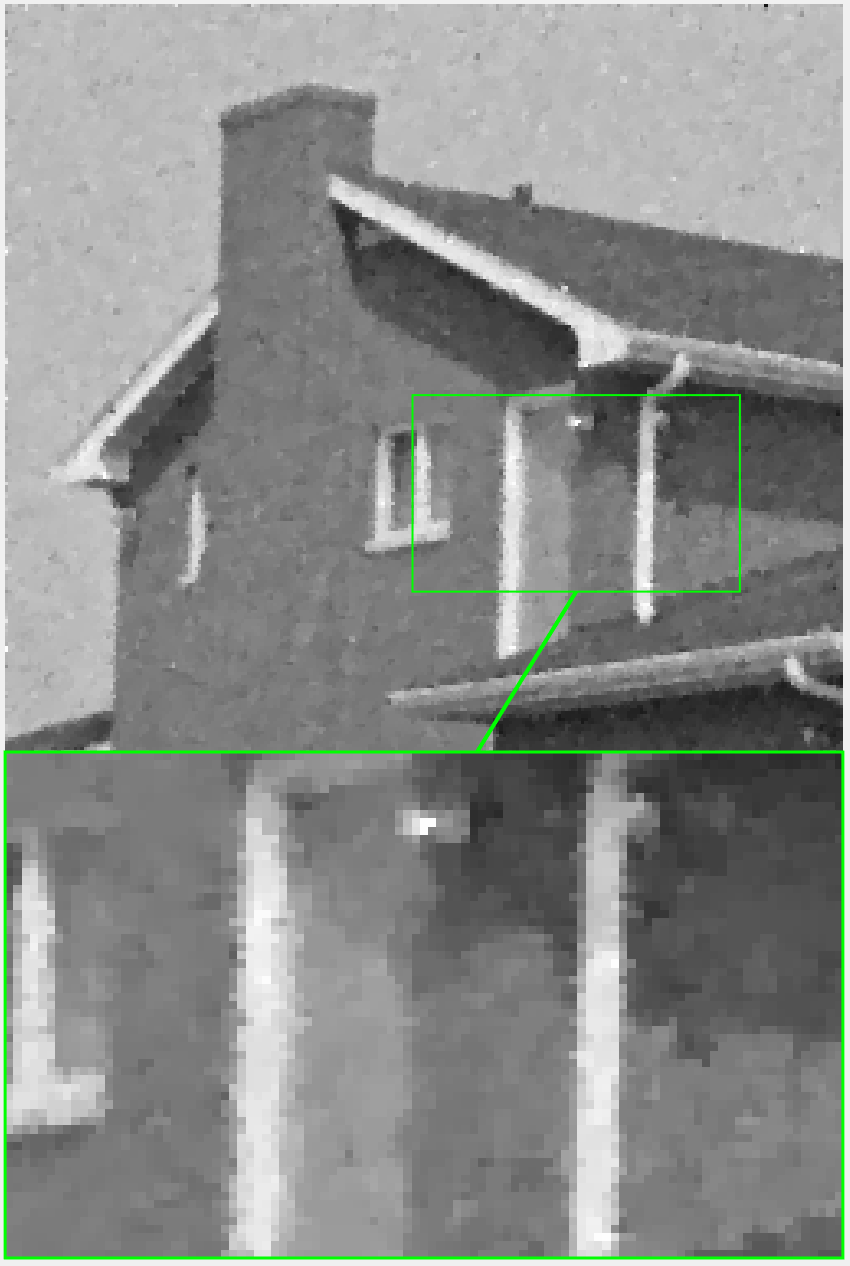}
\centerline{\footnotesize (f) Ours }
\centerline{\footnotesize PSNR:30.26}
\end{minipage}
\caption{Visual comparisons on the selected restored image, which features Cauchy noise with $\gamma = 3$. Zoom in for better visualization.}
\centering
\label{house}
\end{figure*}

%%%  gamma 3 / man %%%
\begin{figure*}[!ht]
\centering
\begin{minipage}[t]{0.28\linewidth}
\centering
\includegraphics[width=0.99\linewidth]{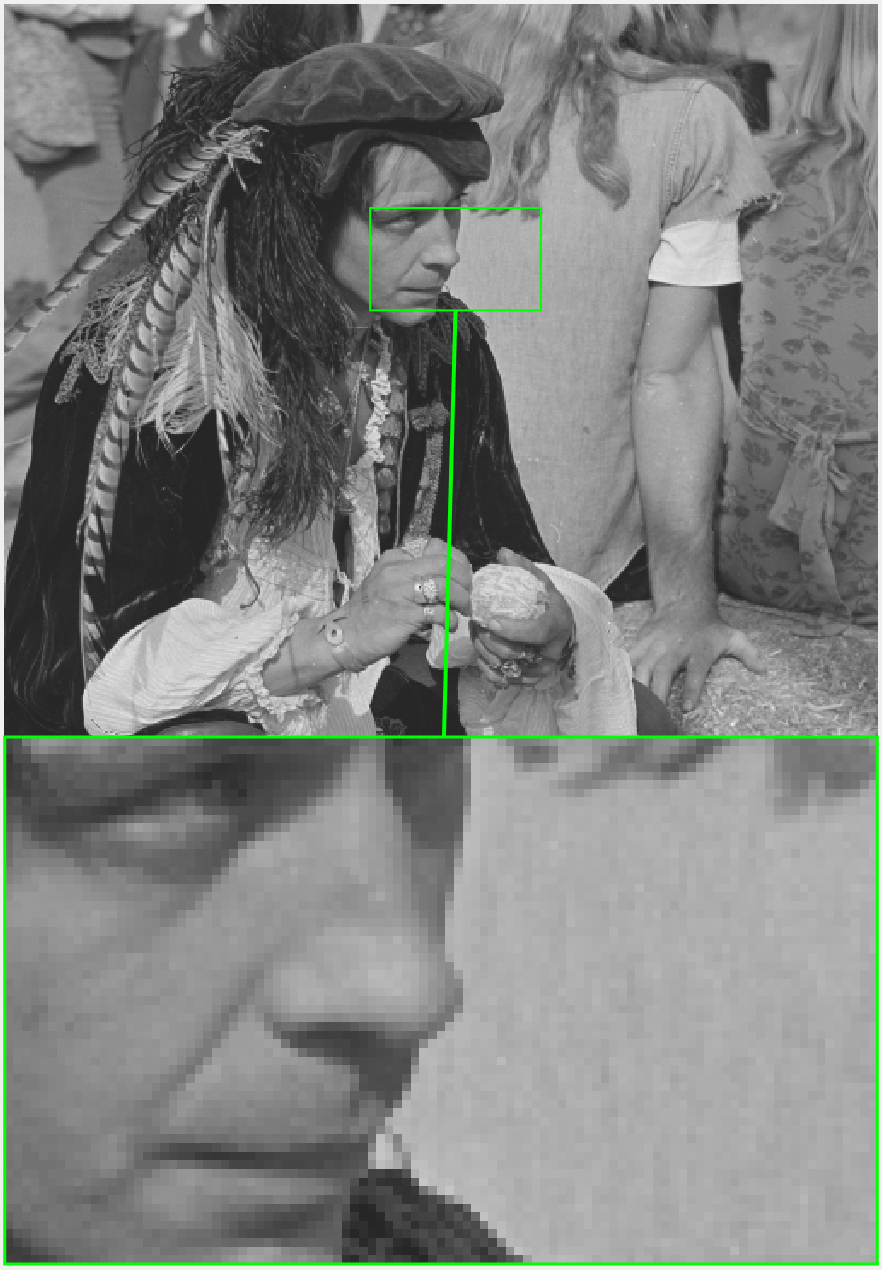}
\centerline{\footnotesize (a) Man}
\centerline{\footnotesize Groundtruth}
\end{minipage}%
\vspace{0.1cm}
\begin{minipage}[t]{0.28\linewidth}
\centering
\includegraphics[width=0.99\linewidth]{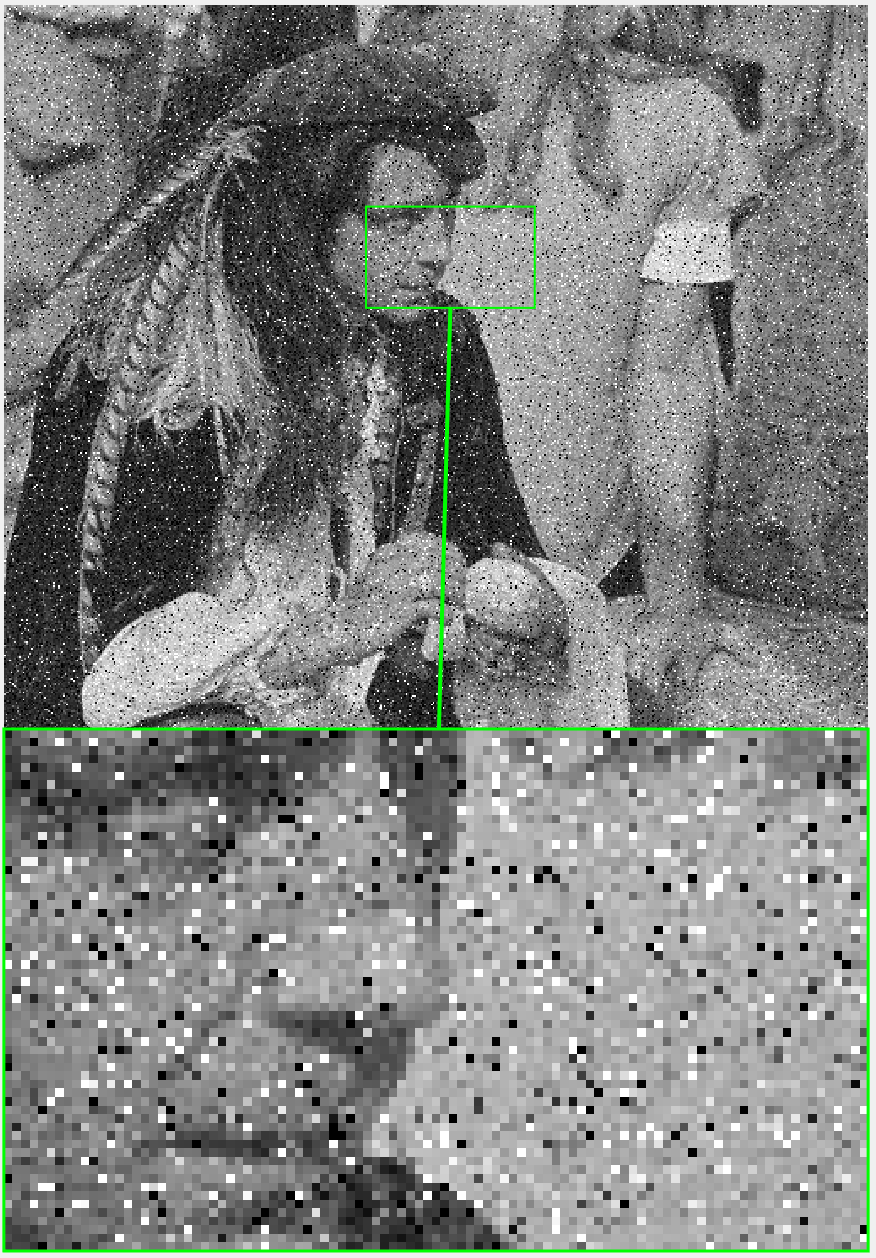}
\centerline{{\footnotesize (b) Noisy image}}
\centerline{{\footnotesize PSNR:16.74}}
\end{minipage}%
\vspace{0.1cm}
\begin{minipage}[t]{0.28\linewidth}
\centering
\includegraphics[width=0.99\linewidth]{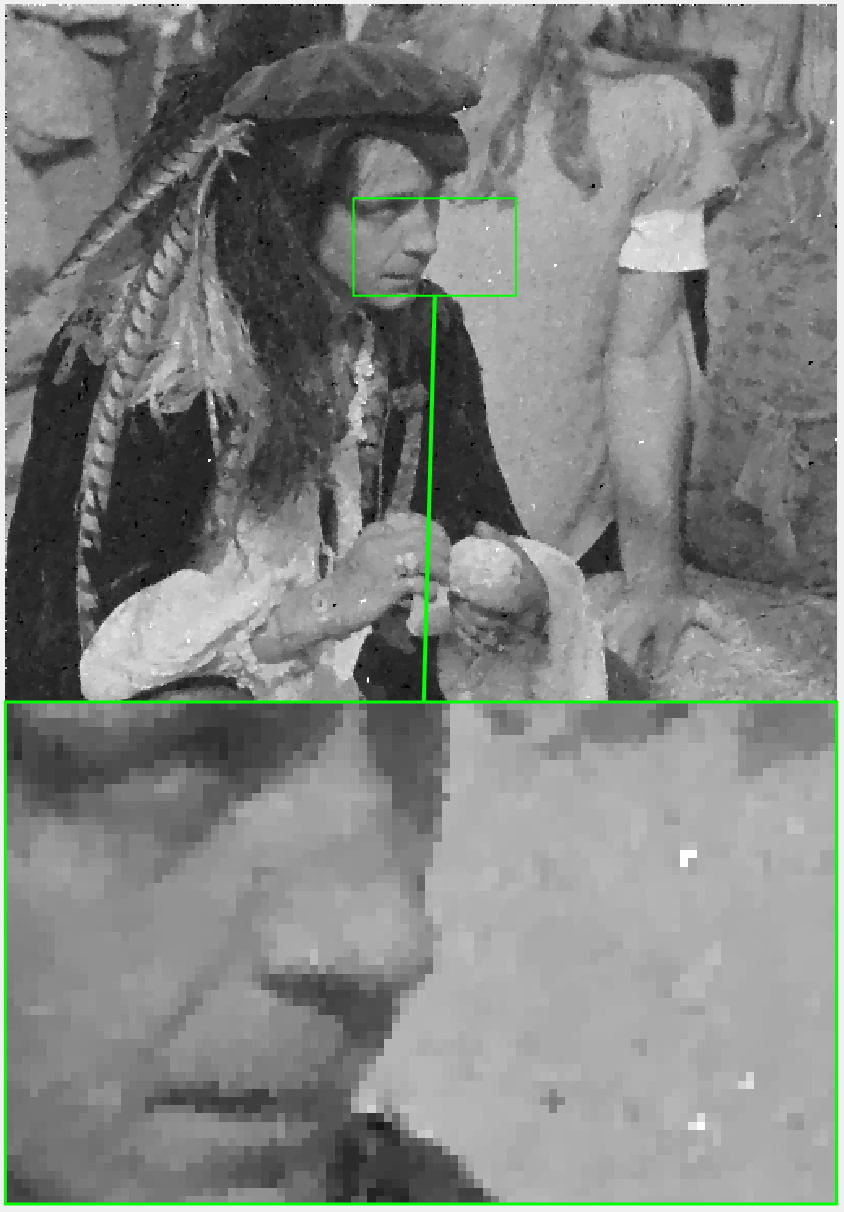}
\centerline{{\footnotesize (c) ADMM\cite{mei2018cauchy}}}
\centerline{{\footnotesize PSNR:28.09}}
\end{minipage}\\
%\vspace{+0.8cm}
\begin{minipage}[t]{0.28\linewidth}
\centering
\includegraphics[width=0.99\linewidth]{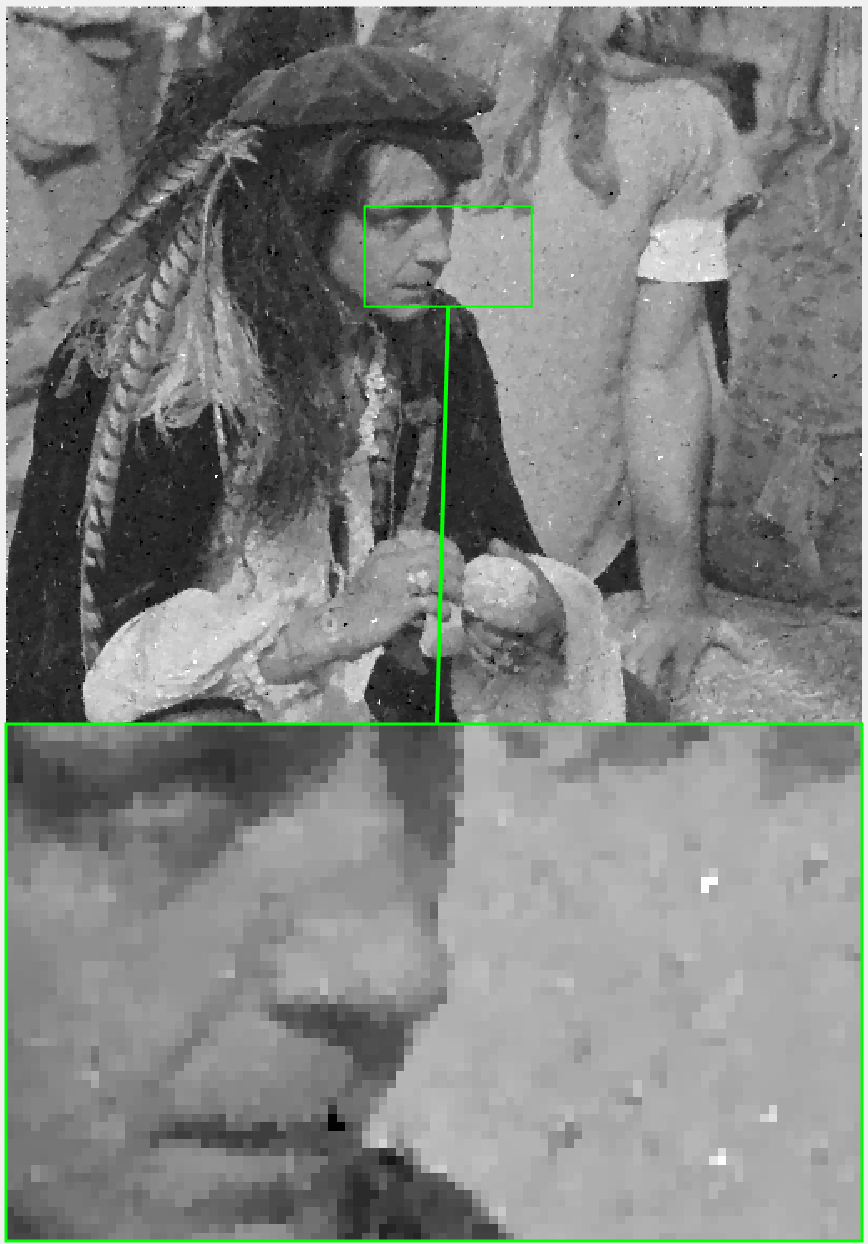}
\centerline{\footnotesize (d) DCA\cite{tao1996numerical}}
\centerline{\footnotesize PSNR:27.79}
\end{minipage}
\begin{minipage}[t]{0.28\linewidth}
\centering
\includegraphics[width=0.99\linewidth]{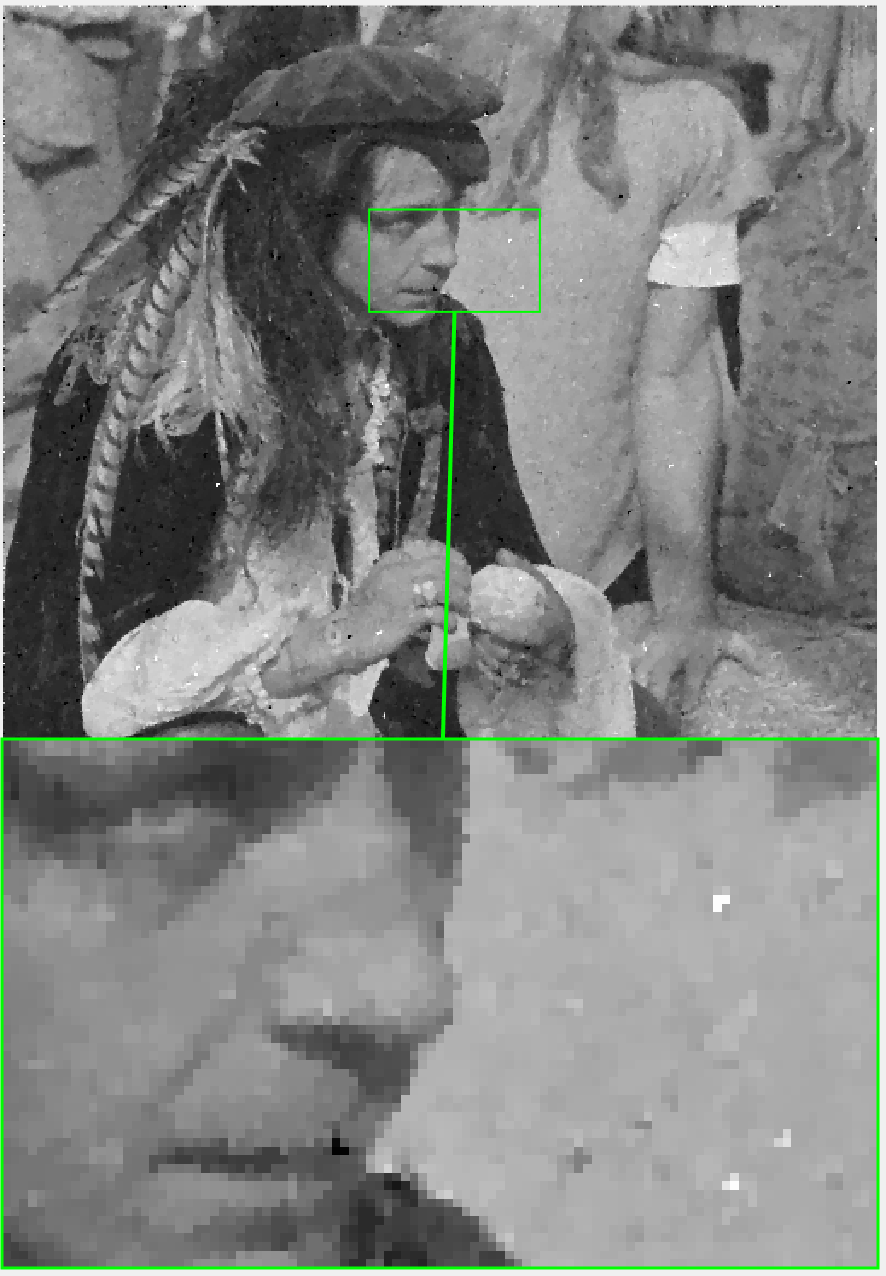}
\centerline{\footnotesize (e) nmBDCA\cite{ferreira2024boosted}}
\centerline{\footnotesize PSNR:28.14}
\end{minipage}
\begin{minipage}[t]{0.28\linewidth}
\centering
\includegraphics[width=0.99\linewidth]{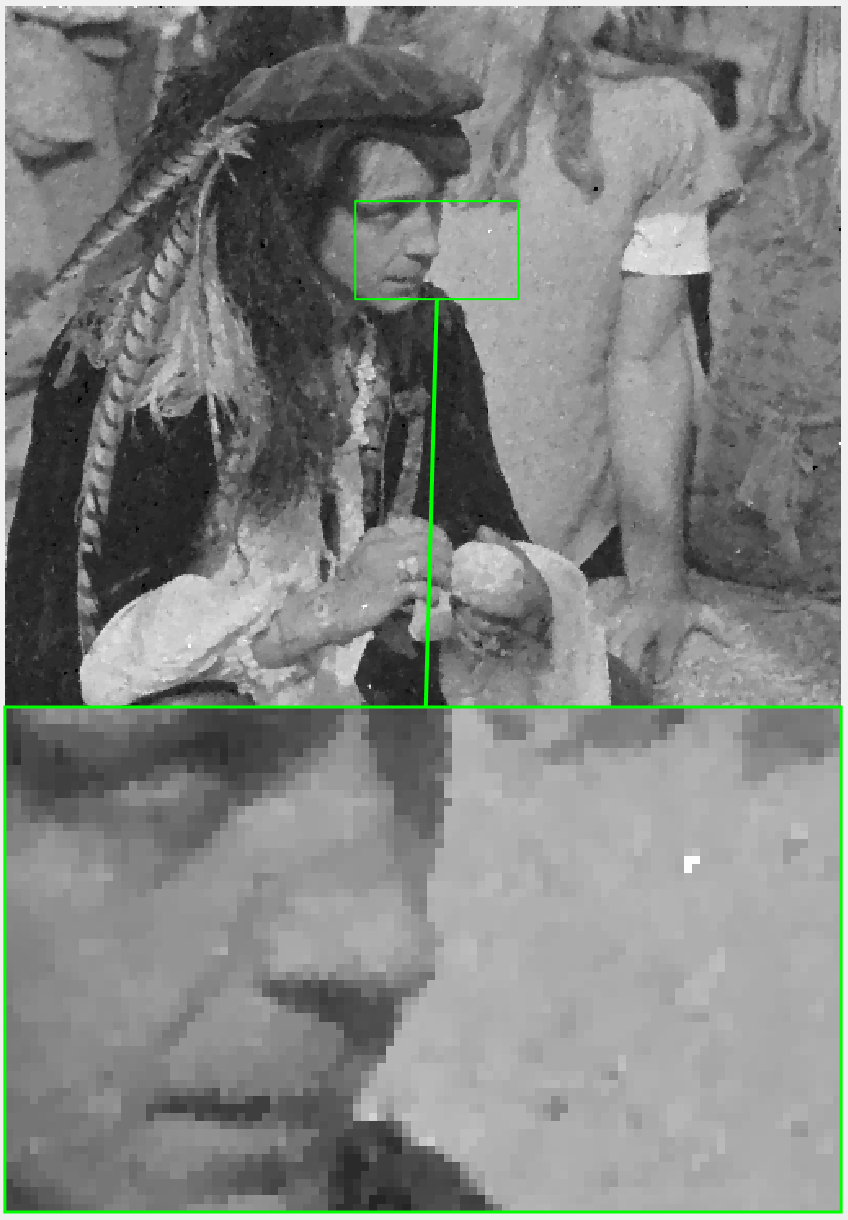}
\centerline{\footnotesize (f) Ours }
\centerline{\footnotesize PSNR:28.71}
\end{minipage}
\caption{Visual comparisons on the selected restored image, which features Cauchy noise with $\gamma = 3$. Zoom in for better visualization.}
\centering
\label{man}
\end{figure*}

%%%  gamma 3 / bird %%%
\begin{figure*}[!ht]
\centering
\begin{minipage}[t]{0.28\linewidth}
\centering
\includegraphics[width=0.99\linewidth]{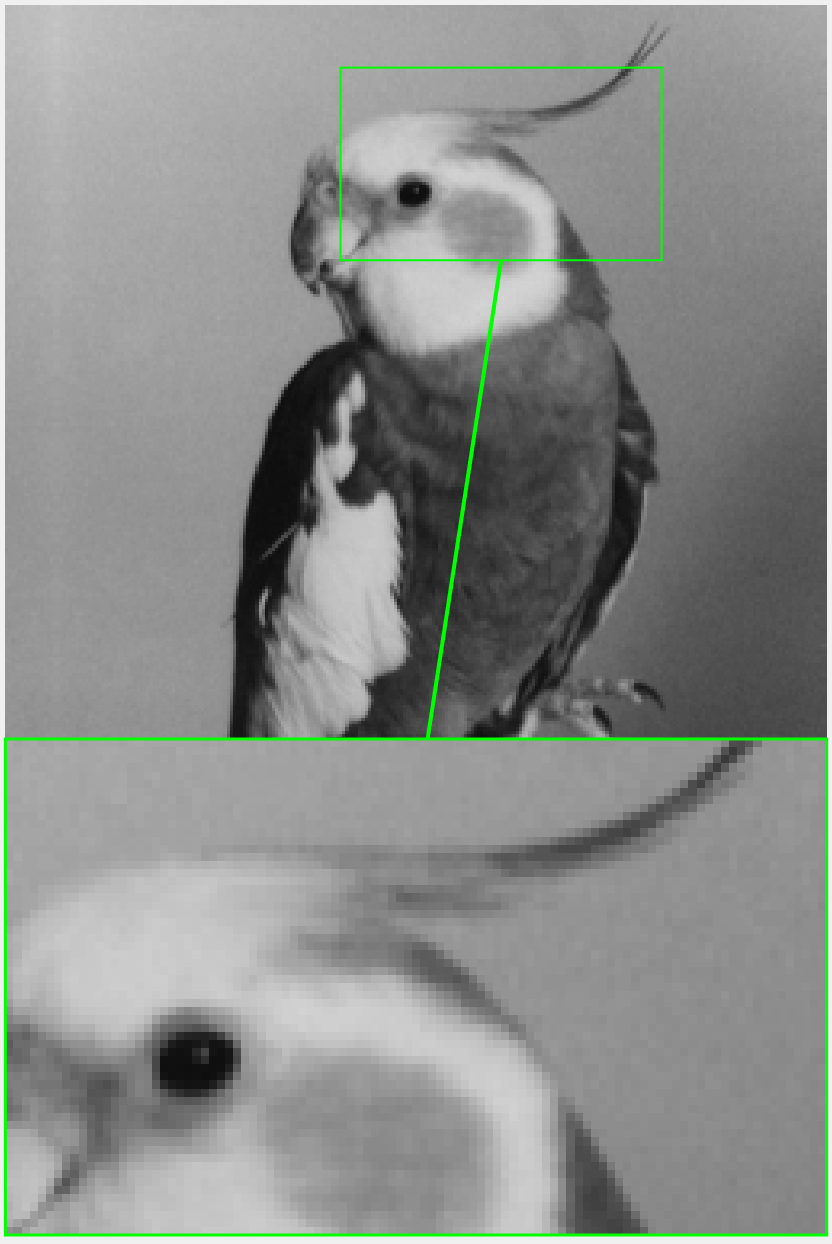}
\centerline{\footnotesize (a) Bird}
\centerline{\footnotesize Groundtruth}
\end{minipage}%
\vspace{0.1cm}
\begin{minipage}[t]{0.28\linewidth}
\centering
\includegraphics[width=0.99\linewidth]{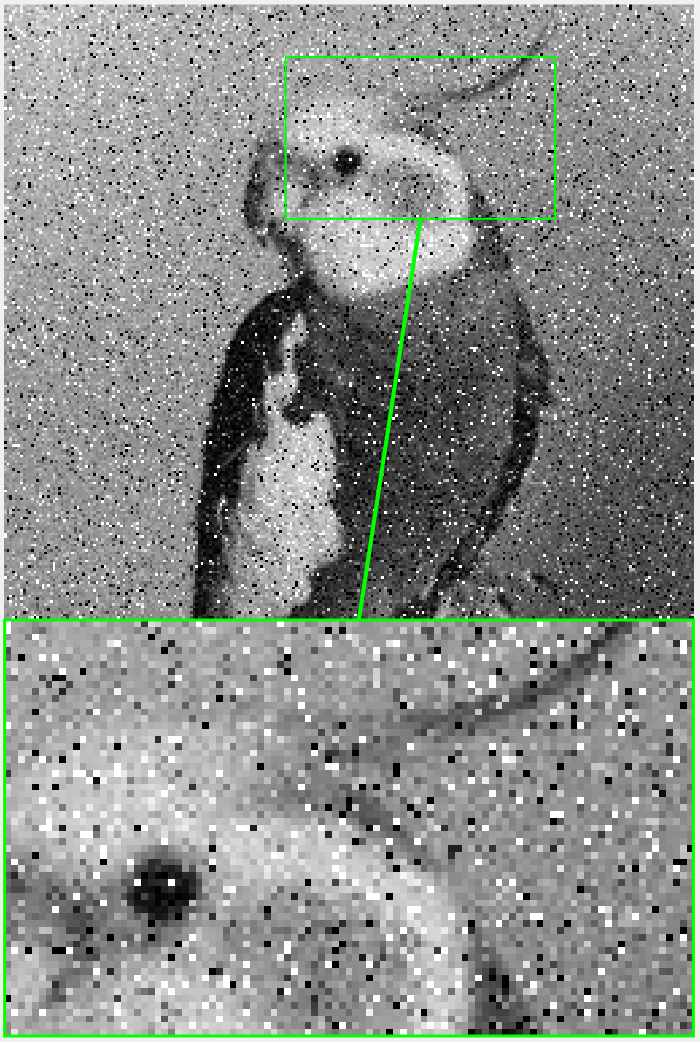}
\centerline{{\footnotesize (b) Noisy image}}
\centerline{{\footnotesize PSNR:16.77}}
\end{minipage}%
\vspace{0.1cm}
\begin{minipage}[t]{0.28\linewidth}
\centering
\includegraphics[width=0.99\linewidth]{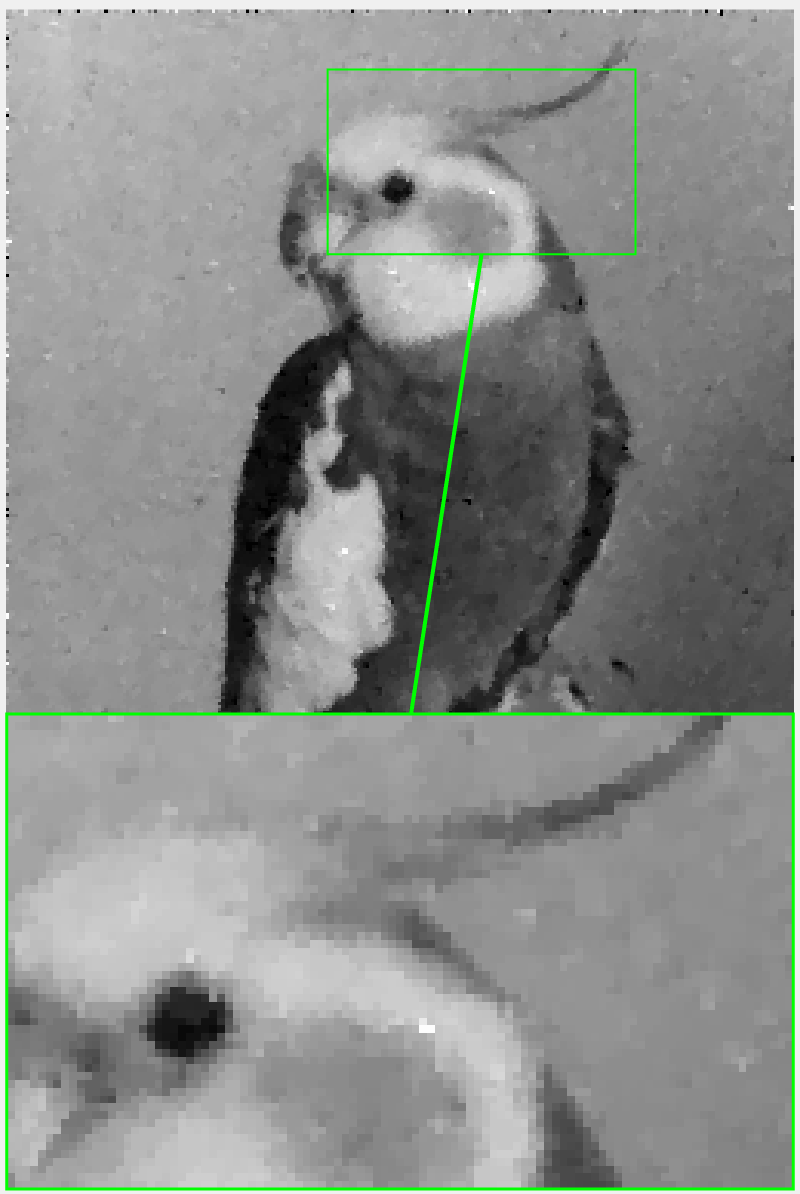}
\centerline{{\footnotesize (c) ADMM\cite{mei2018cauchy}}}
\centerline{{\footnotesize PSNR:30.53}}
\end{minipage}\\
%\vspace{+0.8cm}
\begin{minipage}[t]{0.28\linewidth}
\centering
\includegraphics[width=0.99\linewidth]{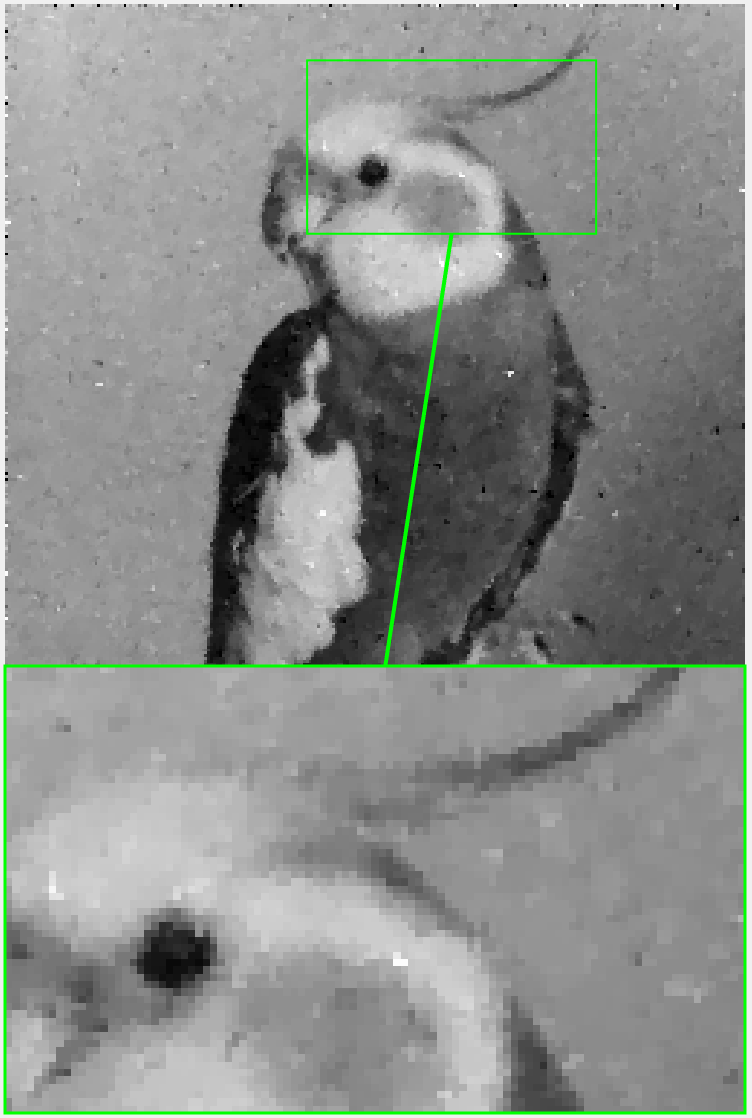}
\centerline{\footnotesize (d) DCA\cite{tao1996numerical}}
\centerline{\footnotesize PSNR:29.55}
\end{minipage}
\begin{minipage}[t]{0.28\linewidth}
\centering
\includegraphics[width=0.99\linewidth]{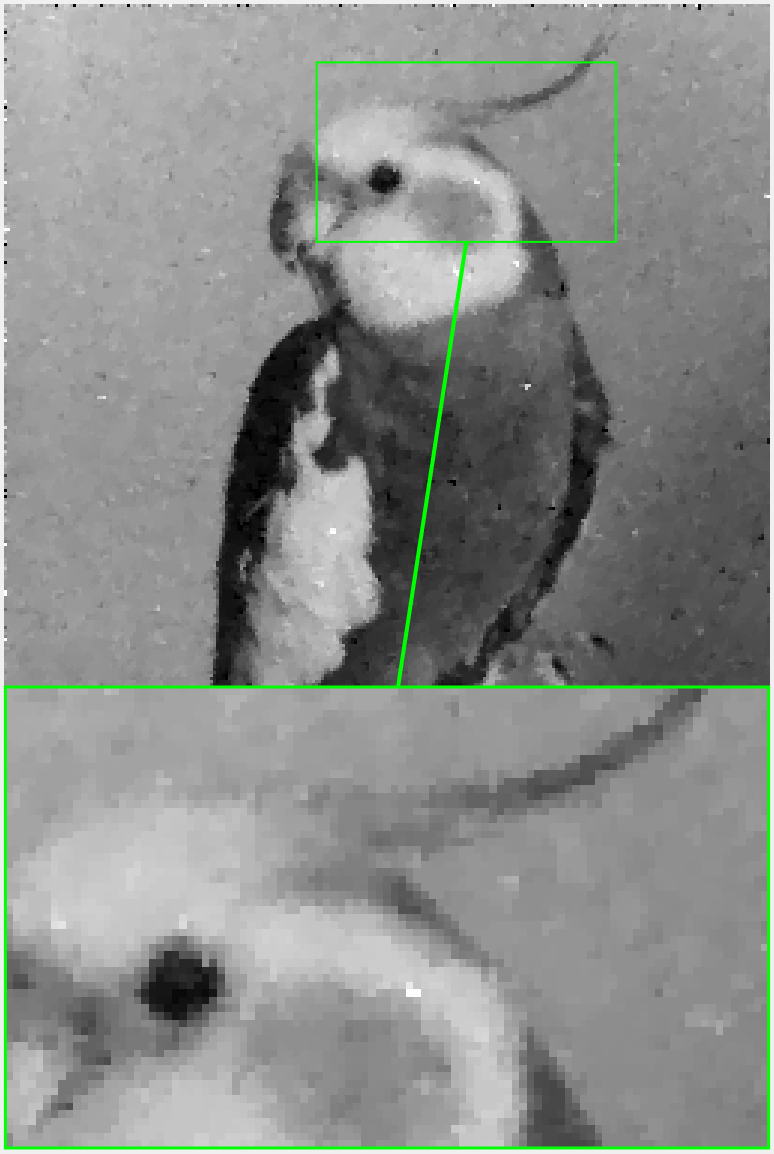}
\centerline{\footnotesize (e) nmBDCA\cite{ferreira2024boosted}}
\centerline{\footnotesize PSNR:30.15}
\end{minipage}
\begin{minipage}[t]{0.28\linewidth}
\centering
\includegraphics[width=0.99\linewidth]{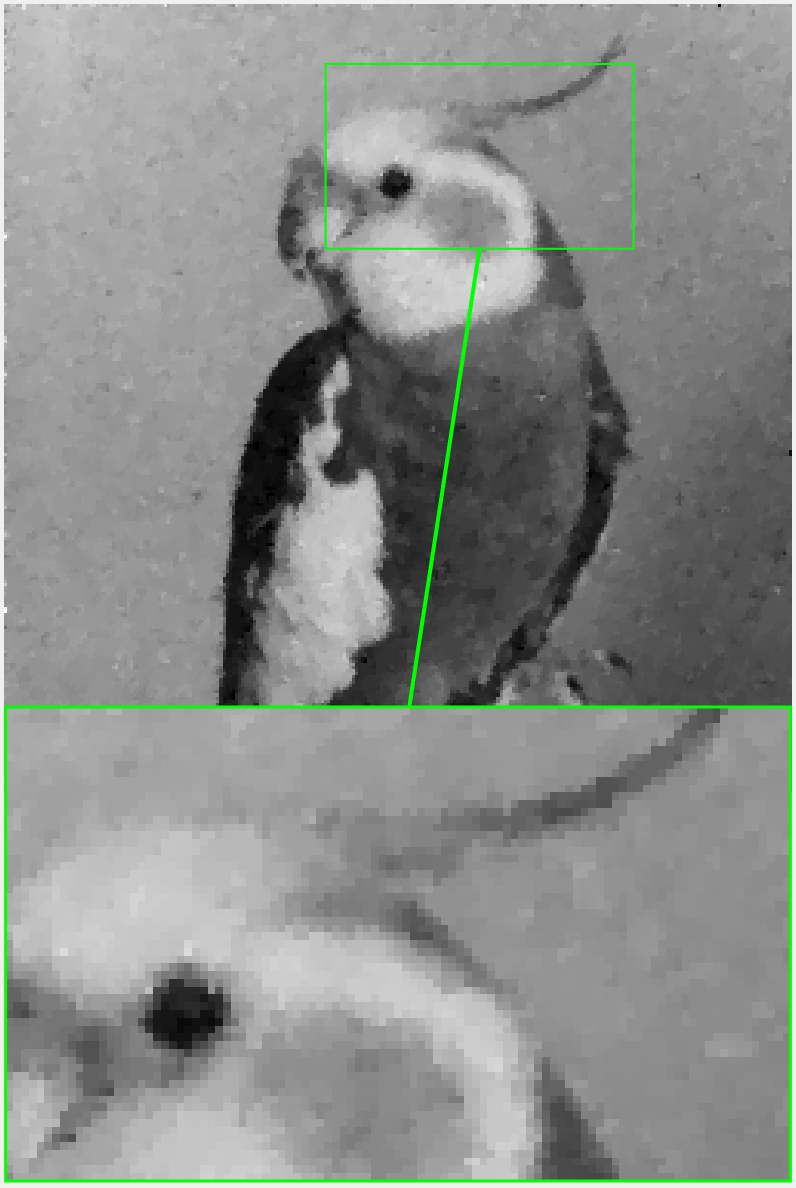}
\centerline{\footnotesize (f) Ours }
\centerline{\footnotesize PSNR:31.75}
\end{minipage}
\caption{Visual comparisons on the selected restored image, which features Cauchy noise with $\gamma = 3$. Zoom in for better visualization.}
\centering
\label{bird}
\end{figure*}

%%%  gamma 5 / house %%%
\begin{figure*}[ht]
\centering
\begin{minipage}[t]{0.28\linewidth}
\centering
\includegraphics[width=0.99\linewidth]{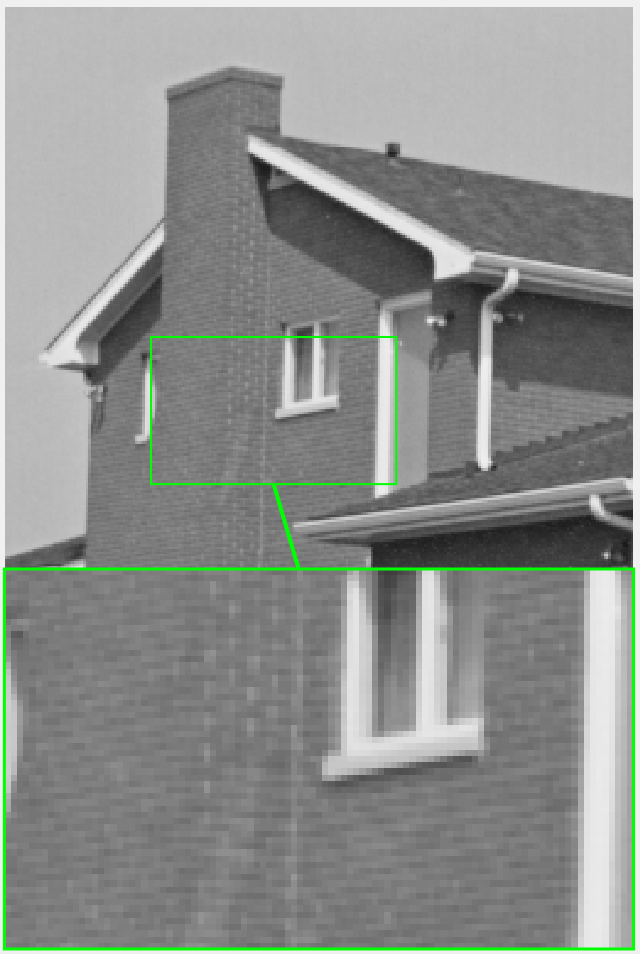}
\centerline{\footnotesize (a) House}
\centerline{\footnotesize Groundtruth}
\end{minipage}%
\vspace{0.1cm}
\begin{minipage}[t]{0.28\linewidth}
\centering
\includegraphics[width=0.99\linewidth]{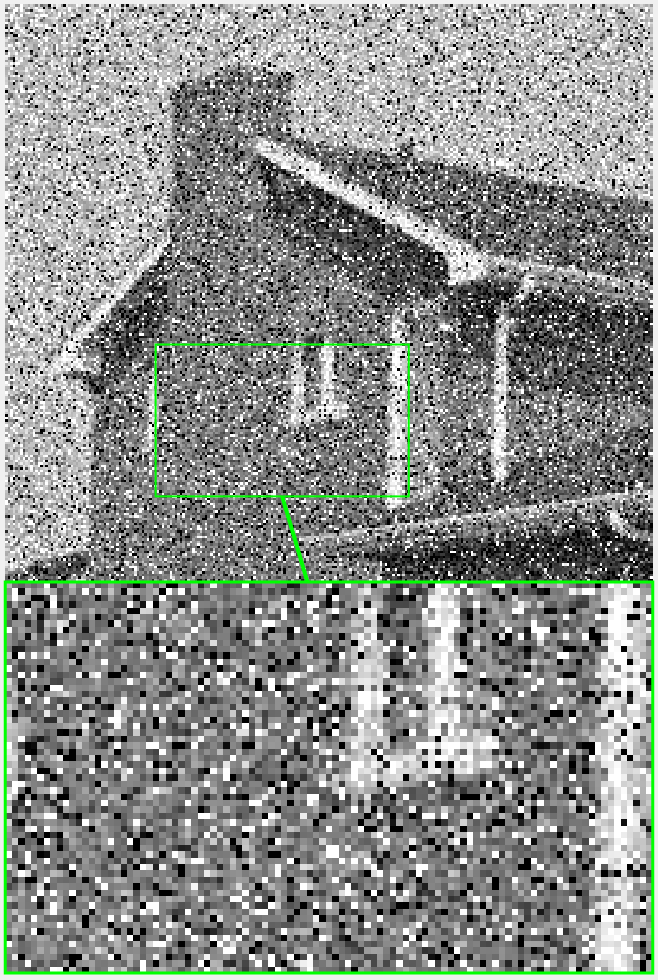}
\centerline{{\footnotesize (b) Noisy image}}
\centerline{{\footnotesize PSNR:12.73}}
\end{minipage}%
\vspace{0.1cm}
\begin{minipage}[t]{0.28\linewidth}
\centering
\includegraphics[width=0.99\linewidth]{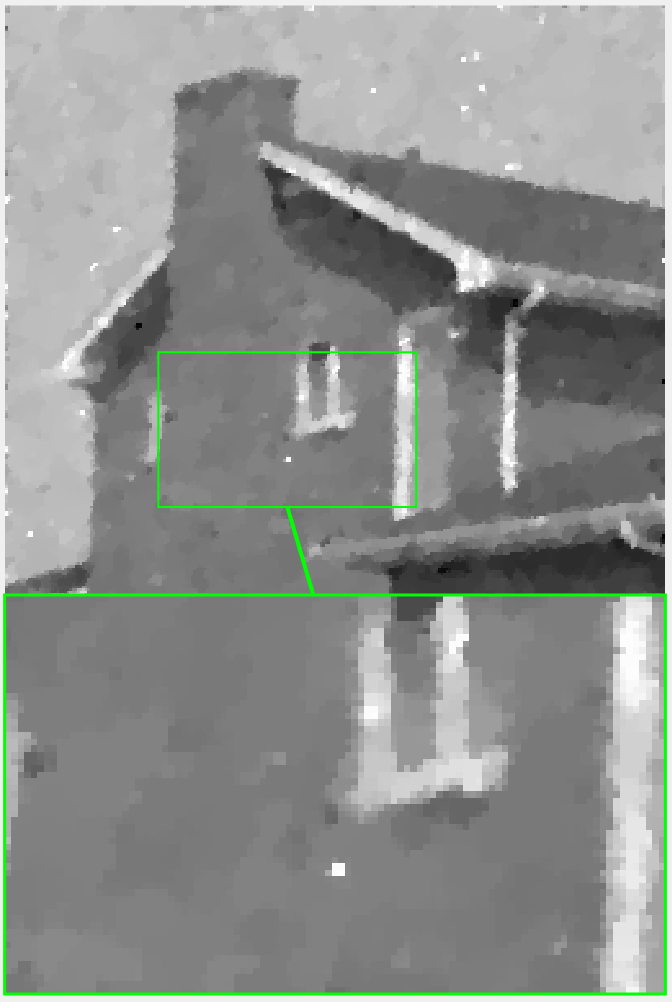}
\centerline{{\footnotesize (c) ADMM\cite{mei2018cauchy}}}
\centerline{{\footnotesize PSNR:27.05}}
\end{minipage}\\
\begin{minipage}[t]{0.28\linewidth}
\centering
\includegraphics[width=0.99\linewidth]{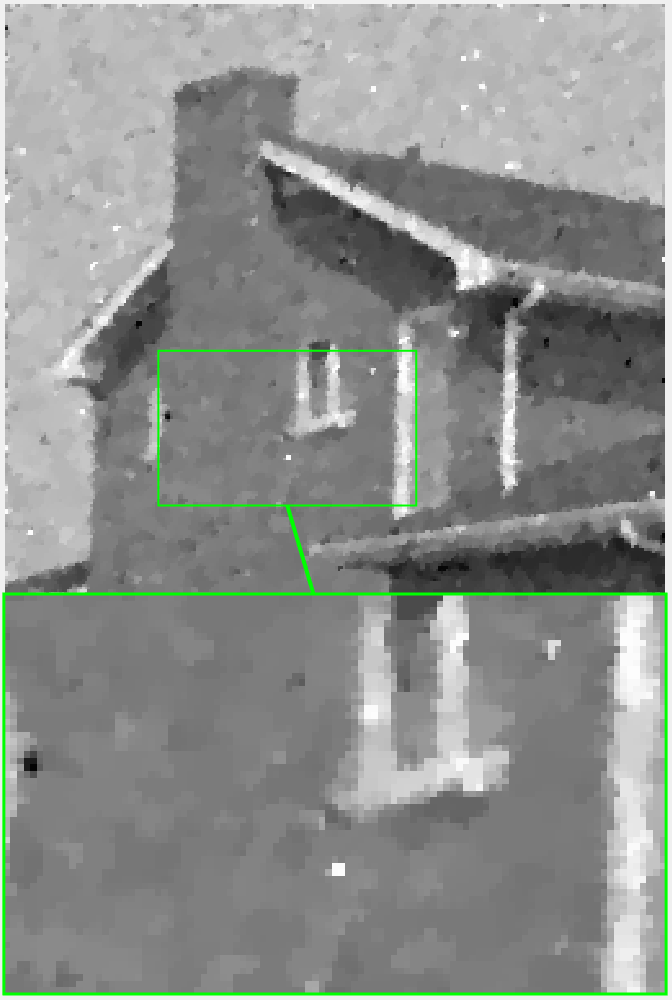}
\centerline{\footnotesize (d) DCA\cite{tao1996numerical}}
\centerline{\footnotesize PSNR:26.45}
\end{minipage}
\begin{minipage}[t]{0.28\linewidth}
\centering
\includegraphics[width=0.99\linewidth]{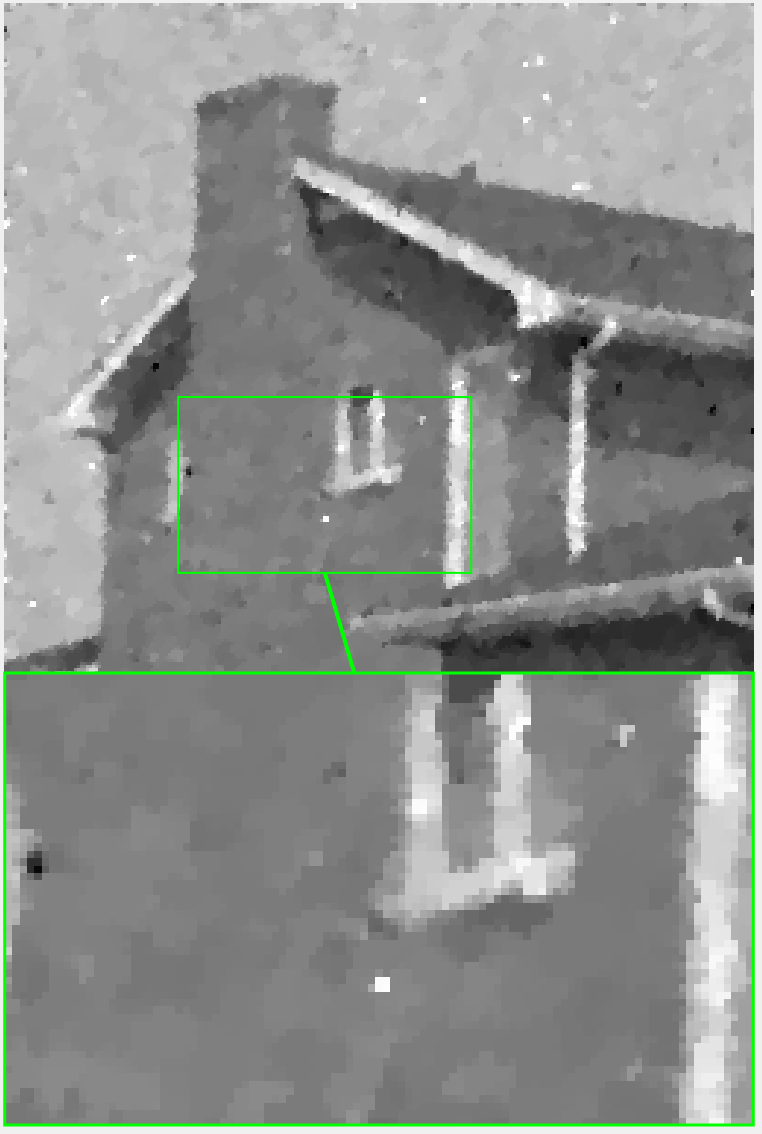}
\centerline{\footnotesize (e) nmBDCA\cite{ferreira2024boosted}}
\centerline{\footnotesize PSNR:26.85}
\end{minipage}
\begin{minipage}[t]{0.28\linewidth}
\centering
\includegraphics[width=0.99\linewidth]{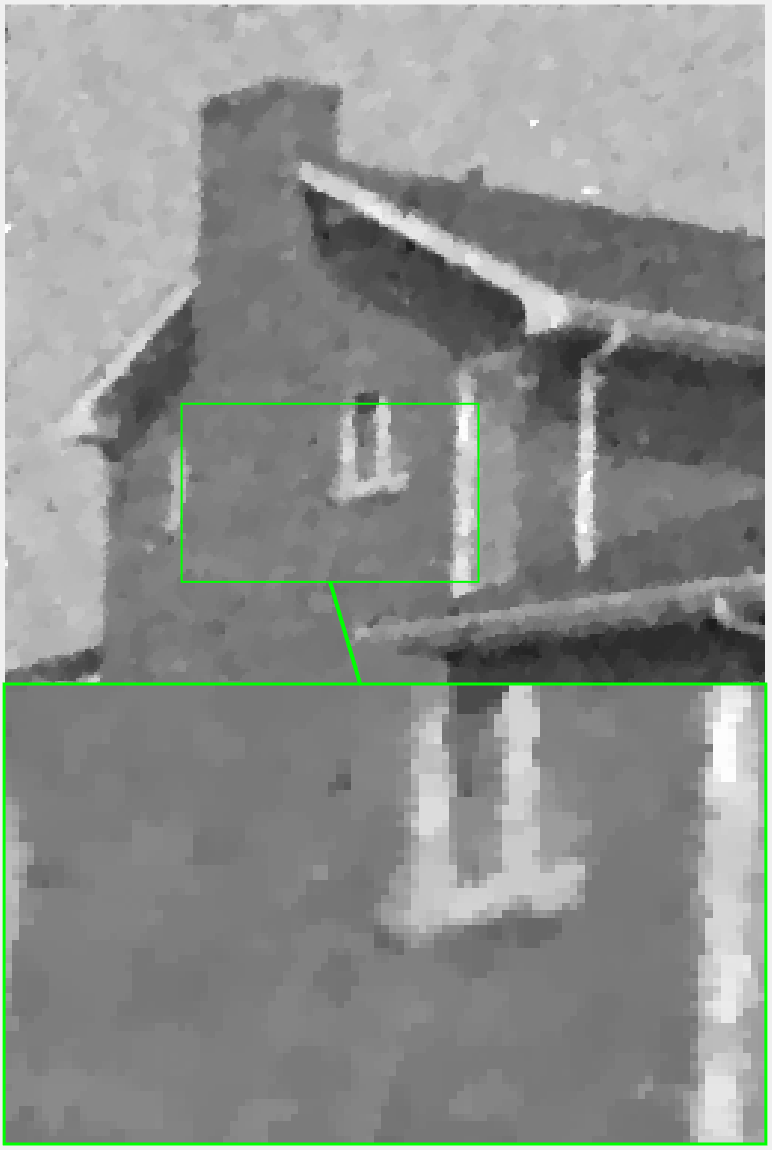}
\centerline{\footnotesize (f) Ours }
\centerline{\footnotesize PSNR:27.23}
\end{minipage}
\caption{Visual comparisons on the selected restored image, which features Cauchy noise with $\gamma = 5$. Zoom in for better visualization.}
\centering
\label{house5}
\end{figure*}

%%%  gamma 5 / man %%%
\begin{figure*}[!ht]
\centering
\begin{minipage}[t]{0.28\linewidth}
\centering
\includegraphics[width=0.99\linewidth]{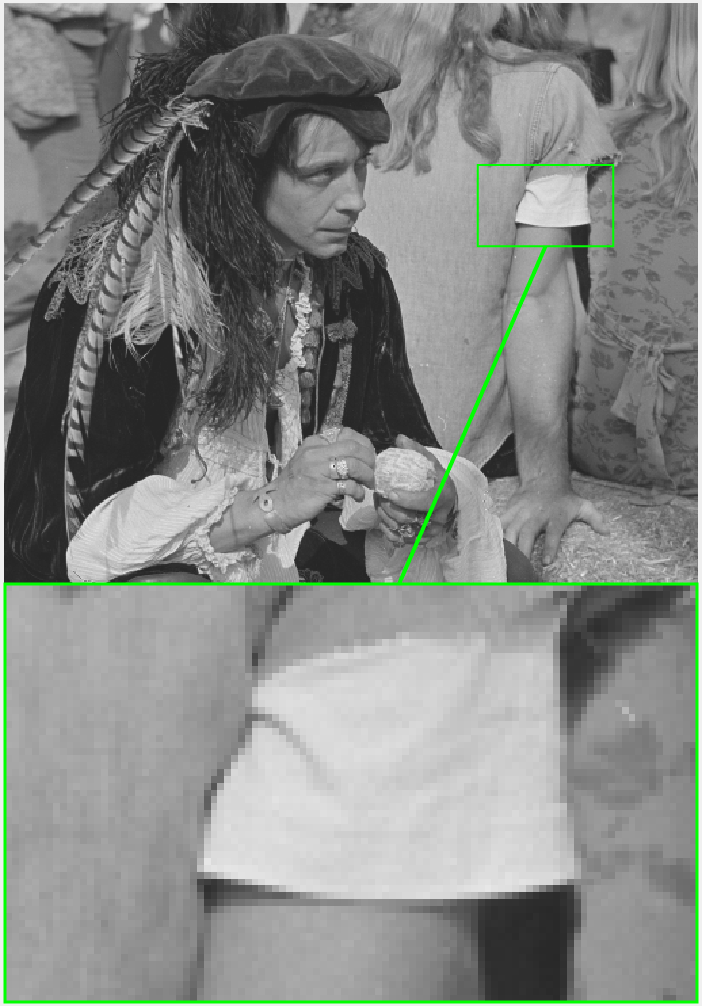}
\centerline{\footnotesize (a) Man}
\centerline{\footnotesize Groundtruth}
\end{minipage}%
\vspace{0.1cm}
\begin{minipage}[t]{0.28\linewidth}
\centering
\includegraphics[width=0.99\linewidth]{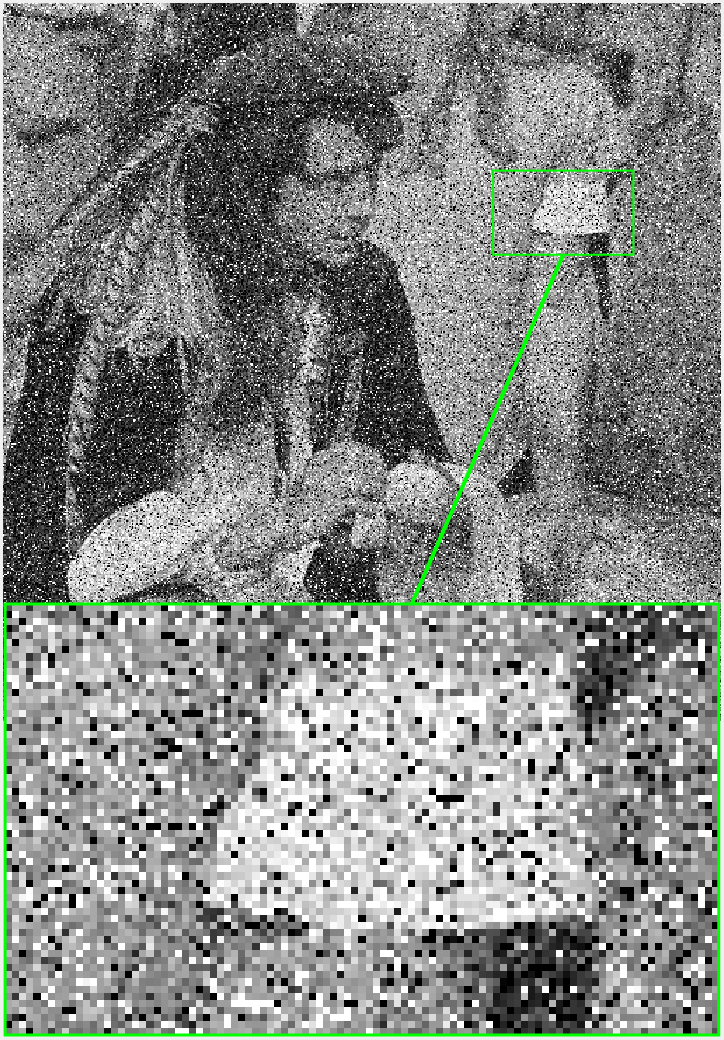}
\centerline{{\footnotesize (b) Noisy image}}
\centerline{{\footnotesize PSNR:12.67}}
\end{minipage}%
\vspace{0.1cm}
\begin{minipage}[t]{0.28\linewidth}
\centering
\includegraphics[width=0.99\linewidth]{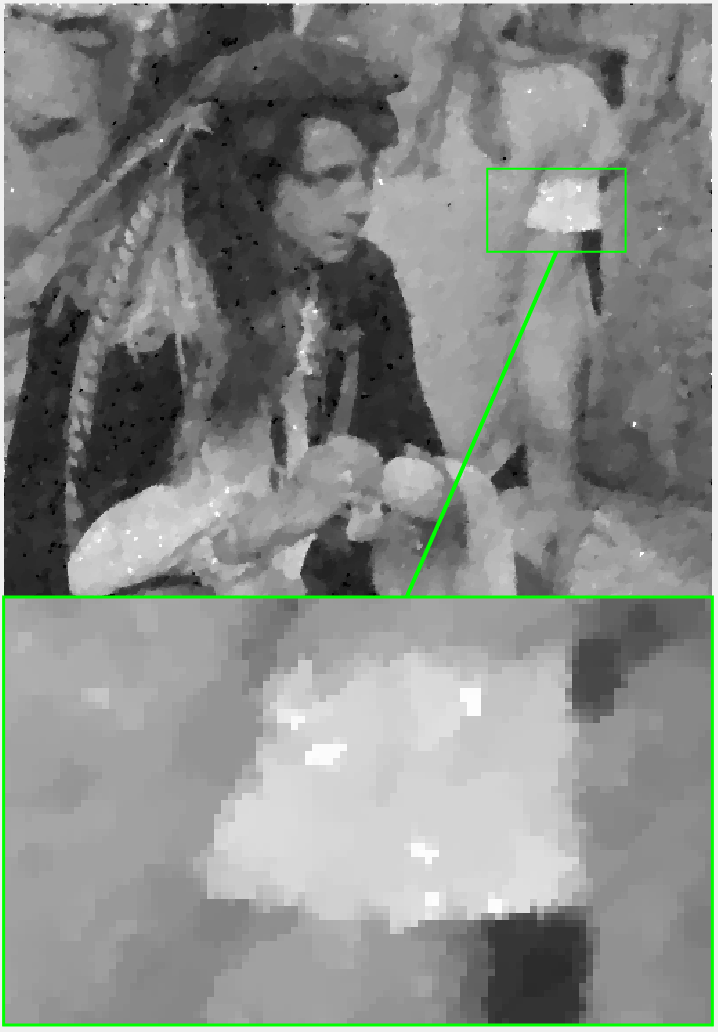}
\centerline{{\footnotesize (c) ADMM\cite{mei2018cauchy}}}
\centerline{{\footnotesize PSNR:25.48}}
\end{minipage}\\
\begin{minipage}[t]{0.28\linewidth}
\centering
\includegraphics[width=0.99\linewidth]{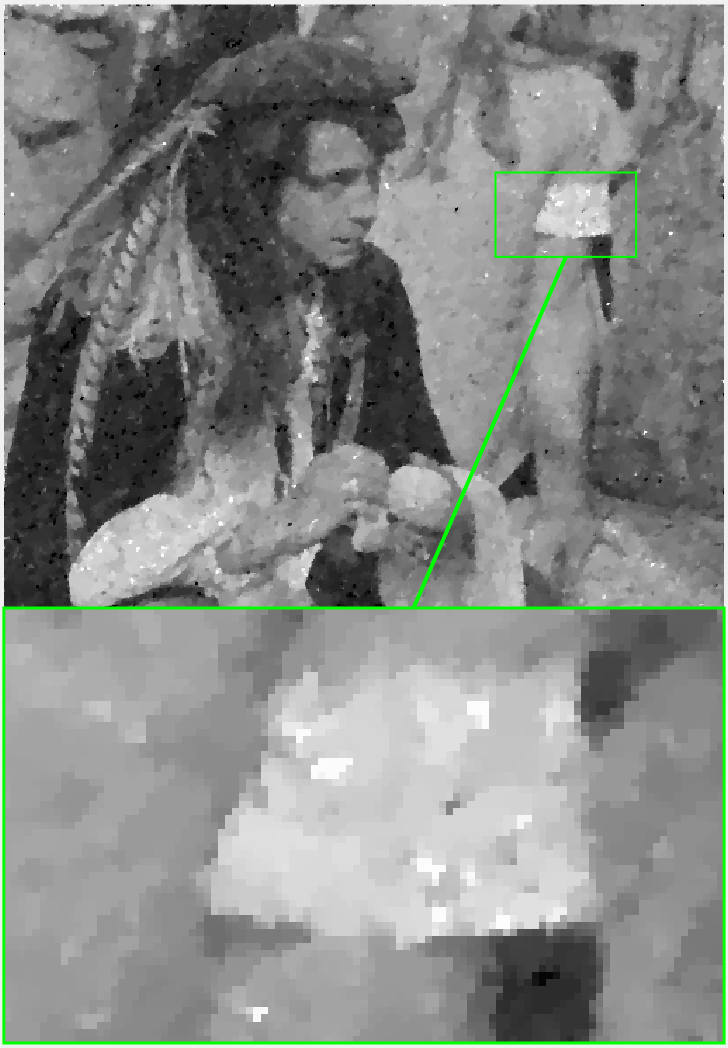}
\centerline{\footnotesize (d) DCA\cite{tao1996numerical}}
\centerline{\footnotesize PSNR:25.32}
\end{minipage}
\begin{minipage}[t]{0.28\linewidth}
\centering
\includegraphics[width=0.99\linewidth]{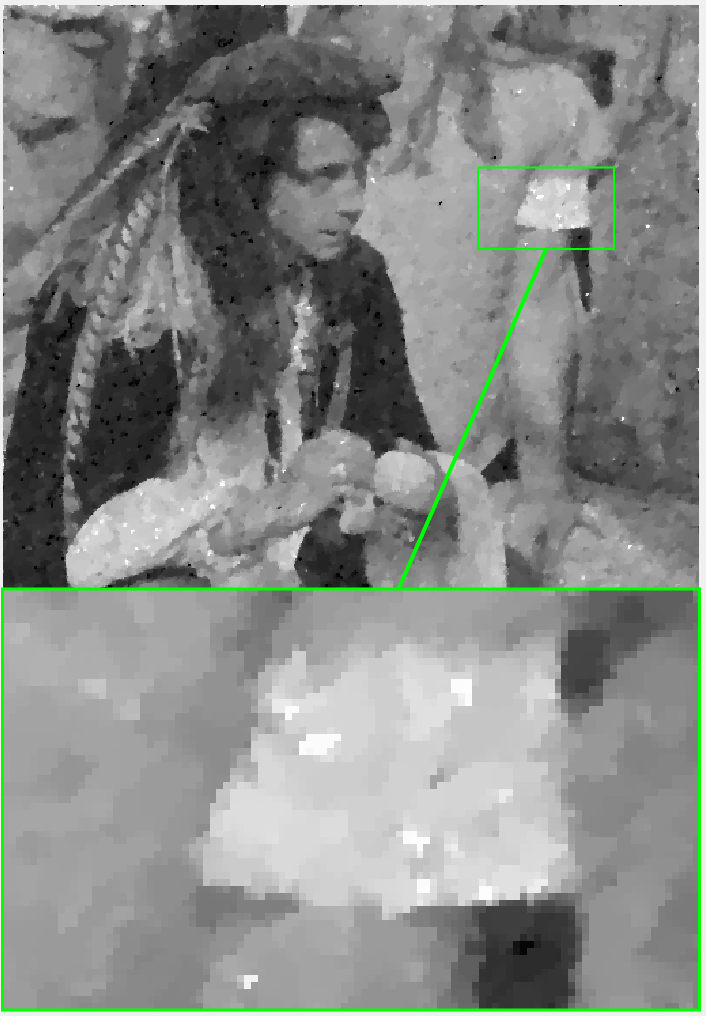}
\centerline{\footnotesize (e) nmBDCA\cite{ferreira2024boosted}}
\centerline{\footnotesize PSNR:25.53}
\end{minipage}
\begin{minipage}[t]{0.28\linewidth}
\centering
\includegraphics[width=0.99\linewidth]{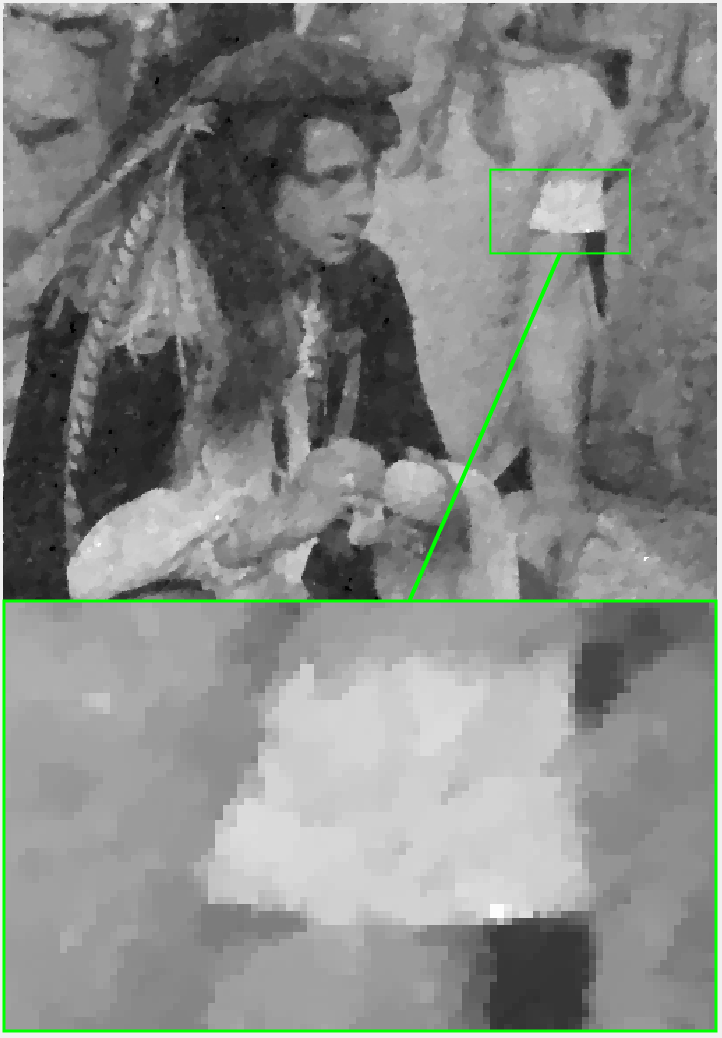}
\centerline{\footnotesize (f) Ours }
\centerline{\footnotesize PSNR:25.89}
\end{minipage}
\caption{Visual comparisons on the selected restored image, which features Cauchy noise with $\gamma = 5$. Zoom in for better visualization.}
\centering
\label{man5}
\end{figure*}

%%%  gamma 5  / bird %%%
\begin{figure*}[!ht]
\centering
\begin{minipage}[t]{0.28\linewidth}
\centering
\includegraphics[width=0.99\linewidth]{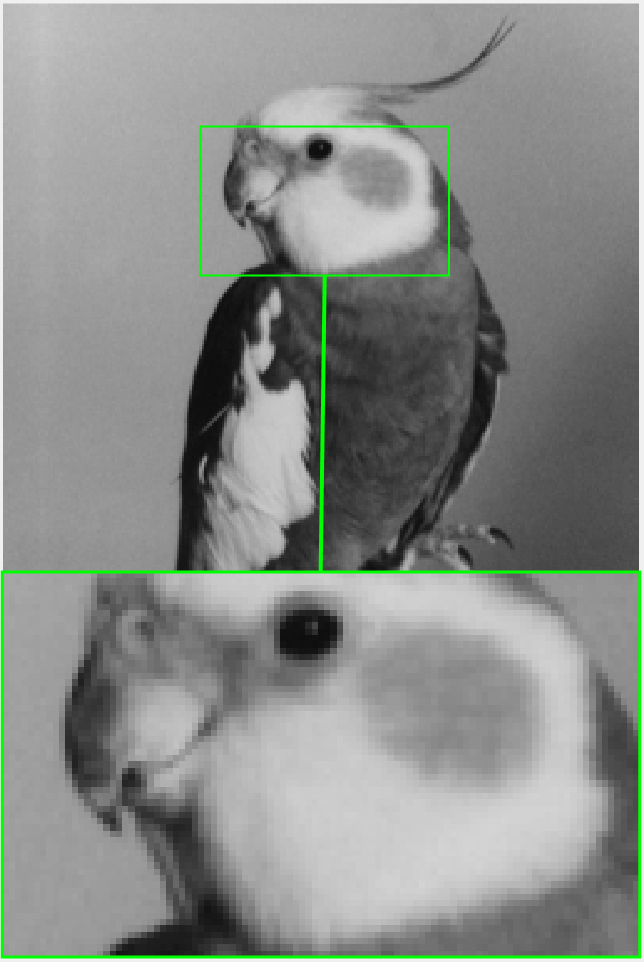}
\centerline{\footnotesize (a) Bird}
\centerline{\footnotesize Groundtruth}
\end{minipage}%
\vspace{0.1cm}
\begin{minipage}[t]{0.28\linewidth}
\centering
\includegraphics[width=0.99\linewidth]{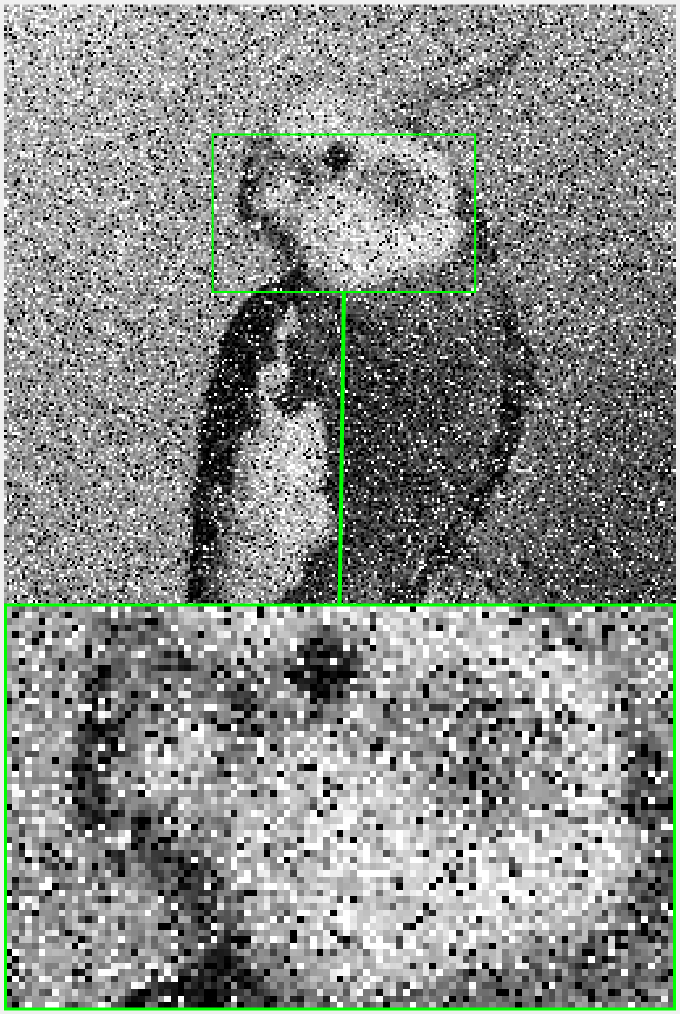}
\centerline{{\footnotesize (b) Noisy image}}
\centerline{{\footnotesize PSNR:12.70}}
\end{minipage}%
\vspace{0.1cm}
\begin{minipage}[t]{0.28\linewidth}
\centering
\includegraphics[width=0.99\linewidth]{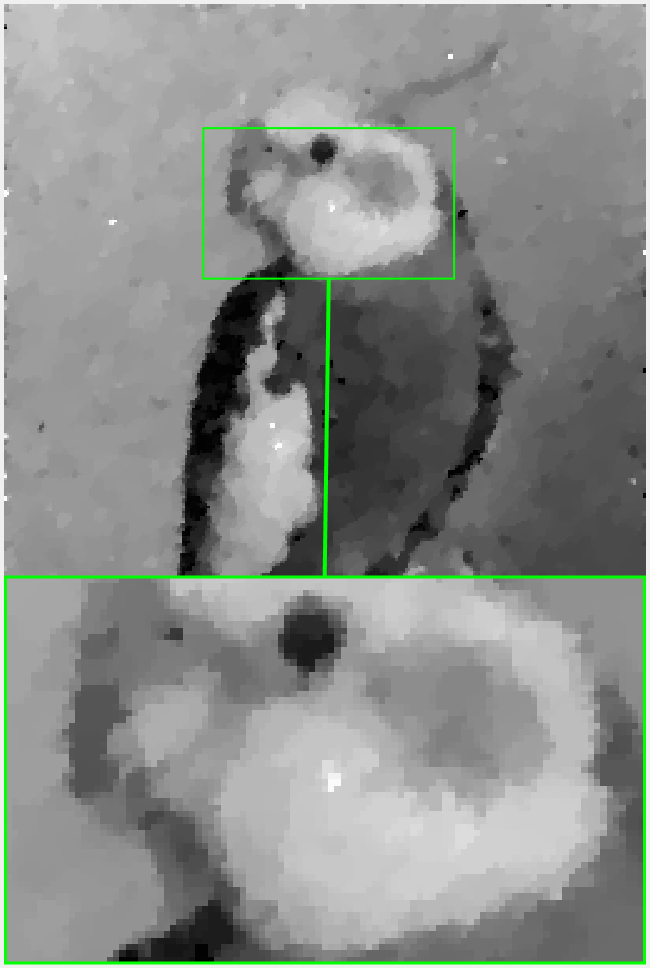}
\centerline{{\footnotesize (c) ADMM\cite{mei2018cauchy}}}
\centerline{{\footnotesize PSNR:28.34}}
\end{minipage}\\
\begin{minipage}[t]{0.28\linewidth}
\centering
\includegraphics[width=0.99\linewidth]{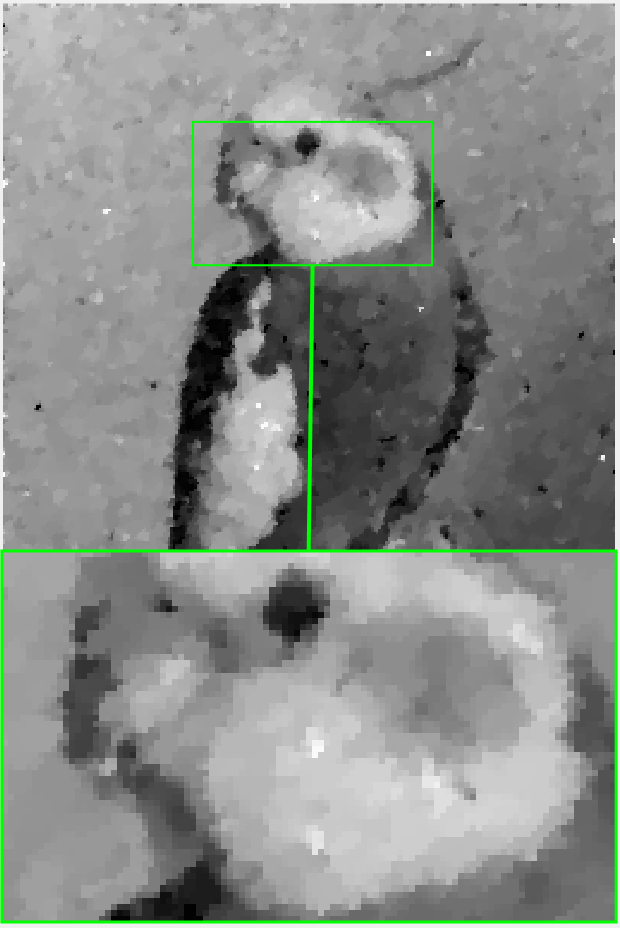}
\centerline{\footnotesize (d) DCA\cite{tao1996numerical}}
\centerline{\footnotesize PSNR:27.31}
\end{minipage}
\begin{minipage}[t]{0.28\linewidth}
\centering
\includegraphics[width=0.99\linewidth]{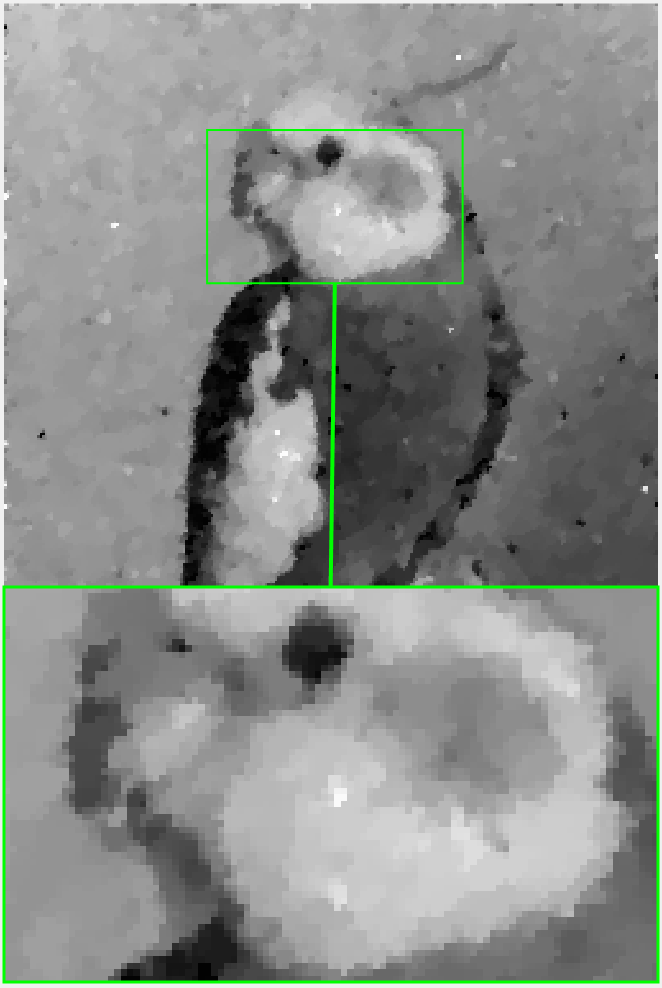}
\centerline{\footnotesize (e) nmBDCA\cite{ferreira2024boosted}}
\centerline{\footnotesize PSNR:27.79}
\end{minipage}
\begin{minipage}[t]{0.28\linewidth}
\centering
\includegraphics[width=0.99\linewidth]{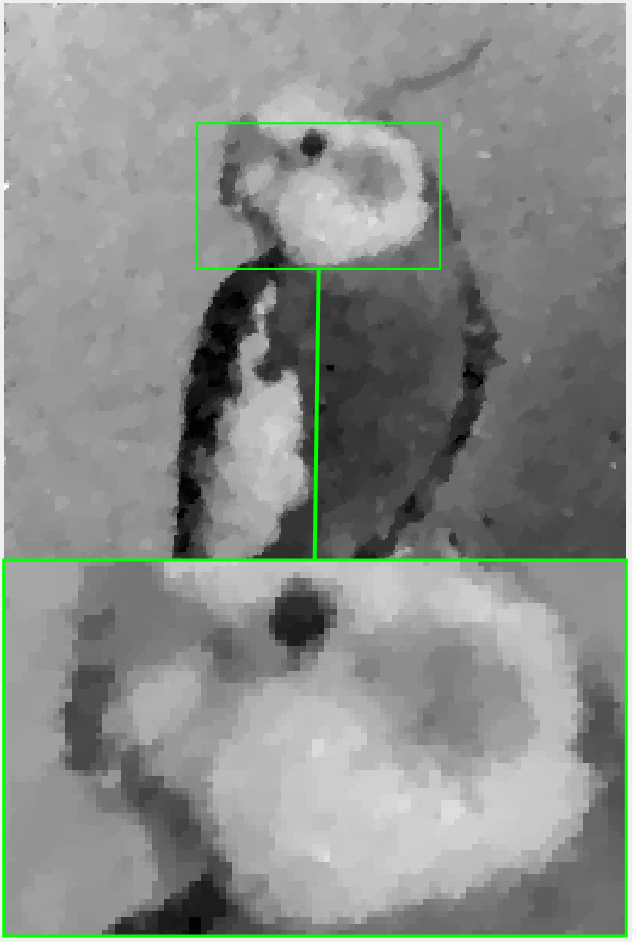}
\centerline{\footnotesize (f) Ours }
\centerline{\footnotesize PSNR:28.67}
\end{minipage}
\caption{Visual comparisons on the selected restored image, which features Cauchy noise with $\gamma = 5$. Zoom in for better visualization.}
\centering
\label{bird5}
\end{figure*}

% \clearpage
%%% energy curve of gamma =3 %%%
\begin{figure*}[!htbp]
\centering
\begin{minipage}[t]{0.49\linewidth}
\centering
\includegraphics[width=0.99\linewidth]{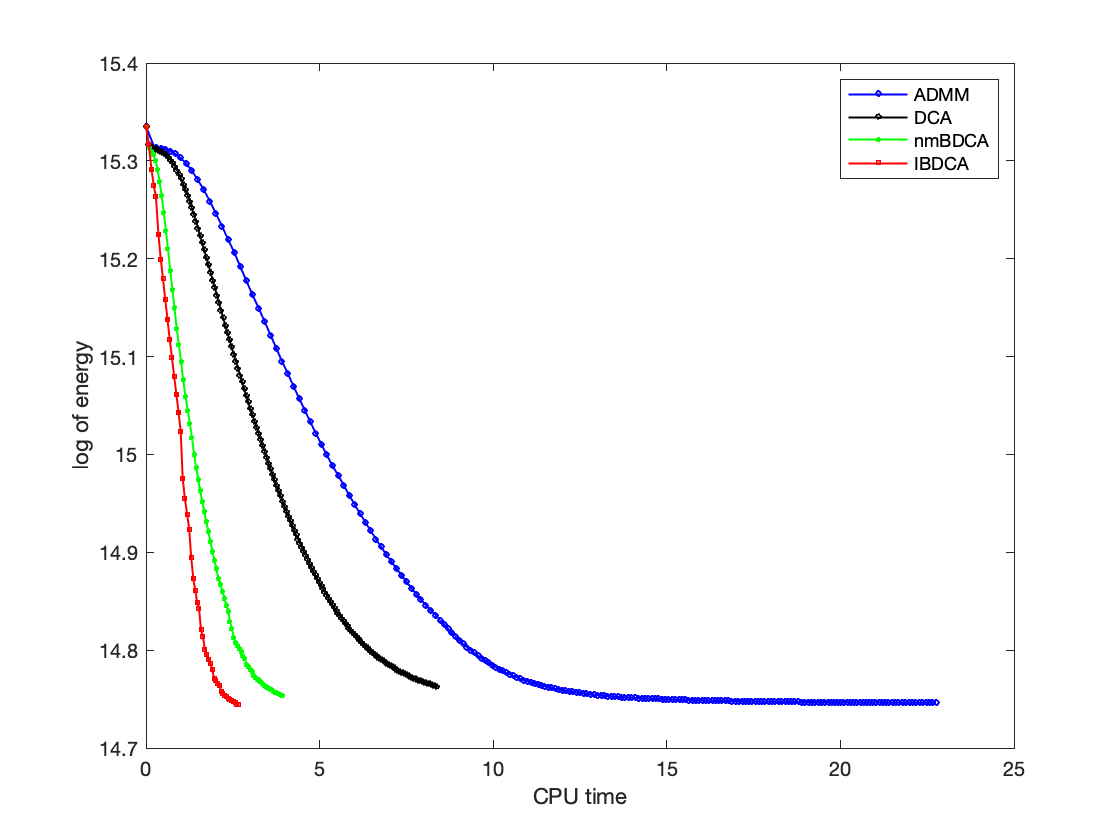}
\end{minipage}
\begin{minipage}[t]{0.49\linewidth}
\centering
\includegraphics[width=0.99\linewidth]{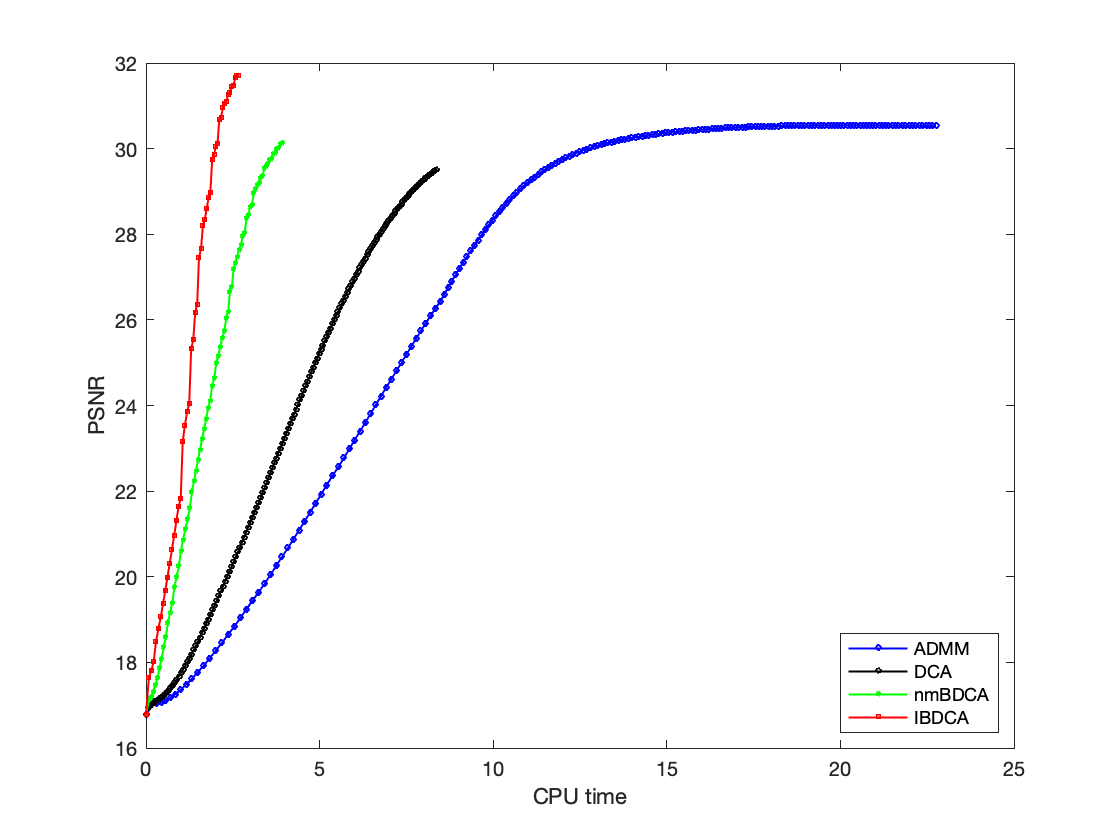}
\end{minipage}
\caption{The Energy and PSNR Curve regarding our proposed method and comparison methods of the selected image “bird” with Cauchy noise level of $\gamma=3$. Left: Energy variation over time graph, Right: PSNR variation over time graph. It can be observed that our proposed method greatly outperforms the other three comparison methods.}
\centering
\label{energy_psnr_3}
\end{figure*}

%%% energy curve of gamma =5 %%%
\begin{figure*}[!t]
\centering
\begin{minipage}[t]{0.49\linewidth}
\centering
\includegraphics[width=0.99\linewidth]{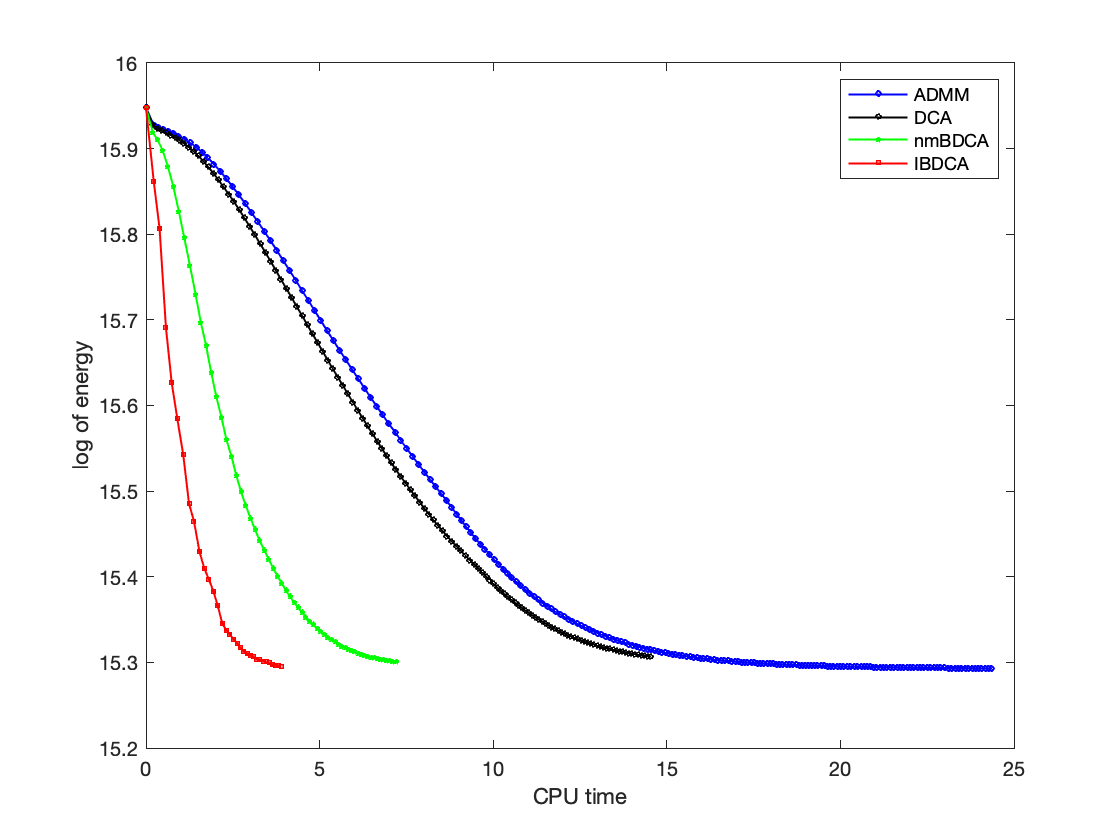}
\end{minipage}
\begin{minipage}[t]{0.49\linewidth}
\centering
\includegraphics[width=0.99\linewidth]{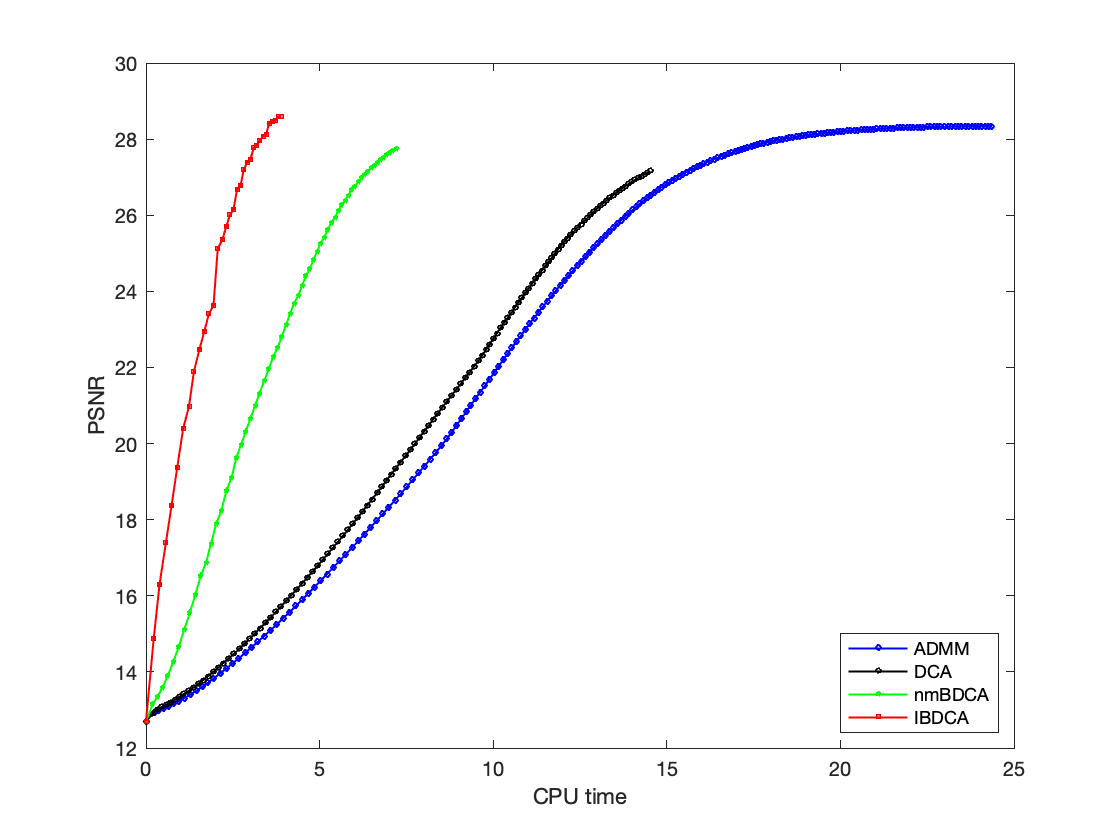}
\end{minipage}
\caption{The Energy and PSNR Curve regarding our proposed method and comparison methods of the selected image “bird” with Cauchy noise level of $\gamma=5$. Left: Energy variation over time graph, Right: PSNR variation over time graph. It can be observed that our proposed method greatly outperforms the other three comparison methods.}
\centering
\label{energy_psnr_5}
\end{figure*}

\subsection{Numerical Experiments}

 In this section, we implement our framework to image denoising tasks regarding Cauchy noise to test the effectiveness of our proposed algorithm. In the test, we employ five test images, as shown in \ref{gd}, with different sizes as our test set. We generate the noisy image by adding Cauchy noise following Cauchy distribution as follows:
$$
f = u + \gamma \frac{v_1}{v_2},
$$
where random variables $v_1$, $v_2$ follow the normal distribution $\mathcal{N}(0,1)$. To vary the Cauchy noise level, we let $\gamma$ be 3 and 5, respectively. The larger $\gamma$ is, the more noisy the image looks, making the image restoration more challenging. 
 
 We make comparisons between our proposed IBDCA and three competitive methods, including ADMM \cite{mei2018cauchy}, DCA \cite{tao1996numerical}, and nmBDCA \cite{ferreira2024boosted}. Due to the numerical experiments in \cite{ferreira2024boosted}, where nmBDCA demonstrated faster convergence and higher accuracy compared to some PPMDC (proximal point methods for DC functions,\cite{Abdellatif2006convergence,Souza2016global}), this paper does not compare with these methods. 
 
In order to enable a fair comparison, the various competing methods have been precisely adjusted and set up using their optimal parameter configurations, resulting in the most satisfactory imaging results. This permits a thorough and unbiased evaluation of the relative capabilities of the different techniques.

{For assessing numerical metrics, the peak signal-to-noise ratio (PSNR) and the relative error (ReErr) are utilized to evaluate the quality of denoising outcomes. The PSNR defined as: 
\begin{equation}
\mathrm{PSNR}=20 \log _{10} \frac{255\sqrt{m_1 m_2}}{\left\|{u}^{\star}-{u}\right\|_F},
\end{equation}
where $\|\cdot\|_F$ represents the Frobrnius norm, ${u}$ denotes the original image, and ${u}^{\star}$ is the restored image, respectively. $m_1$ and $m_2$ denote the size of the input image. The relative error is defined as follows:
 \begin{equation}
\operatorname{ReErr}=\frac{\|{u}^{\star}-{u}\|_2^2}{\|{u}\|_2^2}.
\end{equation}} 

%\subsection{Cauchy Noise Removal}
%In the task of Cauchy noise removal, we present the results of several experiments. We employ the test set consisting of five gray-scale images of different sizes to validate the effectiveness of our proposed algorithm, as shown in \cref{gd}. 

The parameters related to the proposed algorithm affect the experimental results under different levels of Cauchy noise. When $\gamma =3$, the regularization parameter that measures the trade-off between the fidelity term and the regularization term is adjusted to \( \mu = 15 \), and the initial point for all methods is set to \( u_0 = f \). The maximum of iteration is set to be 200, and the $tol$ is set to be $5 \times 10^{-4}$ for better denoised output in the stopping criterion:
$$
\frac{\left|E\left(u^k\right)-E\left(u^{k+1}\right)\right|}{E\left(u^k\right)} \leq t o l.
$$

For $\gamma =5$, we adjust $\mu = 20$, and other parameters and tolerance remain the same. It is worth noting that we need to guarantee $c \geq \mu/\gamma^2$ under any circumstances. The specific parameters for the four methods are as follows:
\begin{enumerate}
\item For ADMM, the coefficient of the penalty term should be larger than $\mu/\gamma^2$. Thus, we set this parameter as $1$ in our numerical experiments. The two subproblems in ADMM are separately solved through the Primal-Dual algorithm and Newton's method \cite{mei2018cauchy}.

\item For DCA, nmBDCA, and our proposed IBDCA, we set the parameter $c =1.10$ under a noise level of $\gamma = 5$. For the Cauchy noise level of $\gamma =3$, we adjust the value to $c = 1.83$ for better results.  

\item In IBDCA, for any $k$, the stepsize for line search is initialized as $\overline{\lambda_k}=10$, and the super line search parameter $\beta = 0.5$, and $\alpha_k=0.9(c-\frac{\mu}{\gamma^2})$.
 \item In nmBDCA, for any $k$, the stepsize for the line search is initialized as $\overline{\lambda_k}=9$ (since the line search begins at $u^k+d^k$), the line search parameter $\beta = 0.5$, $\alpha_k=0.9(c-\frac{\mu}{\gamma^2})$, and $v_k=\frac{\left\|d^k\right\|^2}{k+1}$ (\cite{ferreira2024boosted}, Example 4.6).
\end{enumerate}
\vspace{+0.5cm}
\cref{selectedimg_cauchy} shows the detail of the denoising task for our five test images with the Cauchy noise of $\gamma =3\text{,}5$. We present the PSNR(dB), RelErr, CPU time, and number of iterations corresponding to these results. From the table, it can be observed that our proposed method outperforms others in the value of PSNR and ReErr with a faster speed and fewer iterations to achieve convergence. We also provide visual comparisons of three selected images with other methods, where Fig. \ref{house}, Fig. \ref{man}, and Fig. \ref{bird} present restored images with Cauchy noise of $\gamma =3$, and Fig. \ref{house5}, Fig. \ref{man5} and Fig. \ref{bird5} show denoised outputs with Cauchy noise of $\gamma =5$. The visual comparisons indicate that our IBDCA method consistently produces the best visual effects, resulting in sharper images with enhanced structural detail and reduced noise as clearly shown in the green magnification areas. 

Figures \ref{energy_psnr_3} and \ref{energy_psnr_5} depict the relationship between the logarithm of energy and PSNR values against CPU time for our proposed method as well as three comparative methods. These illustrations underscore the benefits of our proposed IBDCA in addressing the DC problem across various levels of Cauchy noise.  These figures show that our proposed IBDCA has the lowest energy and highest PSNR value within the shortest CPU time.  

\section{Conclusion}
\label{sec:end}
We addresses a certain class of common nonsmooth DC problems and novelly develops an improved boosted DC algorithm (IBDCA). Although in these problems the search direction of BDCA in \cite{aragon2018accelerating} and \cite{nonsmoothBDCA} may not necessarily be a descent direction at the current iteration, it is a descent direction at the previous iteration. With this observation, we presented a linear search strategy and proved that, in many cases, our proposed IBDCA can achieve a greater descent magnitude than DCA with only a small amount of additional computation. Specifically, if the objective function is smooth at the current iteration, IBDCA can provide acceleration effects similar to those of BDCA in \cite{aragon2018accelerating}, IBDCA can thus be viewed as a generalization of BDCA. Under certain standard conditions, we proved that every cluster point generated by IBDCA is a critical point of the objective function and established novel monotonic descent properties. By employing the Kurdyka-Łojasiewicz property of subanalytic functions, we demonstrated global convergence and convergence rates. Future research may focus on whether similar convergence properties hold for general functions satisfying the Kurdyka-Łojasiewicz inequality. Numerical experiments and theoretical analysis have shown that IBDCA achieves a faster convergence rate, and effectively avoids the algorithm from getting trapped in locally suboptimal solutions. Furthermore, our linear search strategy can prevent increases in the objective function in certain potentially non-descent directions, making it more efficient than nmBDCA. Exploring the application of IBDCA in other fields to address nonsmooth or non-convex problems presents promising avenues for future research.

\section*{Acknowledgements}
{This work was supported in part by the National Key R\&D Program of China under Grant 2021YFE0203700, Grant ITF MHP/038/20, Grant CRF 8730063, Grant RGC 14300219, 14302920, 14301121, and CUHK Direct Grant for Research.}

\section*{Data availability}

Enquiries about data availability should be directed to the authors.

\section*{Conflict of interest}

The authors declare that they have no conflict of interest.

\clearpage
\bibliography{references}% common bib file
%% if required, the content of .bbl file can be included here once bbl is generated
%%\input sn-article.bbl

\end{document}